\newtheorem{theorem}{Theorem}[section]
\newtheorem{lemma}[theorem]{Lemma}
\newtheorem{proposition}[theorem]{Proposition}
\newtheorem{corollary}[theorem]{Corollary}
\newtheorem{predefinition}[theorem]{Definition}
\newenvironment{definition}{\begin{predefinition}\rm}{\end{predefinition}}
\newtheorem{preremark}[theorem]{Remark}
\newenvironment{remark}{\begin{preremark}\rm}{\end{preremark}}
\newtheorem{prenotation}[theorem]{Notation}
\newtheorem{preexample}[theorem]{Example}
\newenvironment{example}{\begin{preexample}\rm}{\end{preexample}}
\newtheorem{preclaim}[theorem]{Claim}
\newtheorem{prequestion}[theorem]{Question}
\newenvironment{question}{\begin{prequestion}\rm}{\end{prequestion}}
\DeclareMathOperator{\Gal}{Gal}
\DeclareMathOperator{\rk}{rk}
\DeclareMathOperator{\Jac}{Jac}
\DeclareMathOperator{\Proj}{Proj}
\newcommand{\bbF}{\mathbb{F}}
\definecolor{jolimauve}{rgb}{0.67,0.1,0.7}
\title{Non-ordinary curves with a Prym variety of low $p$-rank}
\date{}
\author[Celik]{Turku Ozlum Celik}
\address{Turku Ozlum Celik, Laboratoire IRMAR, UMR CNRS 6625 Campus de Beaulieu, 35042, Rennes, France}
\email{turku-ozlum.celik@univ-rennes1.fr}
\author[Elias]{Yara Elias}
\address{Yara Elias, Max Planck Institute for Mathematics, Vivatsgasse 7, 53111
Bonn, Germany}
\email{yara.elias@mail.mcgill.ca}
\author[G\"une\c{s}]{Bur\c{c}\.{i}n G\"une\c{s}}
\address{Bur\c{c}\.{i}n G\"une\c{s}, Sabanci University, Faculty of Engineering and Natural Sciences, 
Tuzla, Istanbul, 34956, Turkey}
\email{bgunes@sabanciuniv.edu}
\author[Newton]{Rachel Newton}
\address{Rachel Newton\\ Department of Mathematics and Statistics\\ University of Reading\\
Whiteknights\\ PO Box 220\\
Reading RG6 6AX\\
UK}
\email{r.d.newton@reading.ac.uk}
\author[Ozman]{Ekin Ozman}
\address{Ekin Ozman, Bogazici University, Faculty of Arts and Sciences, Bebek, Istanbul, 34342, Turkey}
\email{ekin.ozman@boun.edu.tr}
\author[Pries]{Rachel Pries}
\address{Rachel Pries, Department of Mathematics, 
Colorado State University, 
Fort Collins, CO 80523, USA}
\email{pries@math.colostate.edu}
\author[Thomas]{Lara Thomas}
\address{Lara Thomas, Universit\'e de Franche Comt\'e, 16, route de Gray, 25030 Besan\c{c}on, France and Lyc\'ee Claude Fauriel, 28, avenue de la Lib\'eration, 42000 Saint-\' Etienne, France.}
\email{lthomas@math.cnrs.fr}
\begin{document}

\begin{abstract} 
If $\pi: Y \to X$ is an unramified double cover of a smooth curve of genus $g$,
then the Prym variety $P_\pi$ is a principally polarized abelian variety of dimension $g-1$.
When $X$ is defined over an algebraically closed field $k$ of characteristic $p$, 
it is not known in general which $p$-ranks can occur for $P_\pi$ 
under restrictions on the $p$-rank of $X$.
In this paper, when $X$ is a non-hyperelliptic curve of genus $g=3$, 
we analyze the relationship between the Hasse-Witt matrices of $X$ and $P_\pi$.
As an application, when $p \equiv 5 \bmod 6$, we prove that
there exists a curve $X$ of genus $3$ and $p$-rank $f=3$ 
having an unramified double cover $\pi:Y \to X$ for which $P_\pi$ 
has $p$-rank $0$ (and is thus supersingular);
for $3 \leq p \leq 19$, we verify the same for each $0 \leq f \leq 3$.
Using theoretical results about $p$-rank stratifications of moduli spaces, 
we prove, for small $p$ and arbitrary $g \geq 3$, that there 
exists an unramified double cover $\pi: Y \to X$ such that both $X$ and $P_\pi$ have small $p$-rank.

\

\noindent
Keywords: curve, Jacobian, Prym variety, abelian variety, p-rank, supersingular, 
moduli space, Kummer surface\\
MSC: primary 11G20, 14H10, 14H40, 14K15, 14Q05\\
secondary: 11G10, 11M38, 14G17, 14K25, 14Q10
\end{abstract}



\maketitle

\section{Introduction} \label{Introduction}
Let $p$ be a prime number and let $k$ be an algebraically closed field of characteristic $p$. 
Let $A$ be an abelian variety of dimension $g$ defined over $k$.  
The \emph{$p$-rank} of $A$ is the integer $f$ defined by $\#A[p](k) = p^f$. 
It is known that $0 \leq f \leq g$. 
Let $X$ be a smooth projective connected curve of genus $g$ defined over $k$. 
Then the $p$-rank of $X$ is the $p$-rank of its Jacobian.
An equivalent definition is that $f$ equals the maximal integer $m$ such that there exists an unramified
$(\mathbb Z/p \mathbb Z)^m$-Galois cover $X' \to X$.
When $f=g$, we say that $A$ (or $X$) is \emph{ordinary}.
 
The $p$-rank of a curve $X$ equals the stable rank of the Frobenius map on $\mathrm{H}^1(X,\mathcal{O}_X)$, and thus can be determined from its Hasse-Witt or Cartier-Manin matrix (see subsections~\ref{SCM}-\ref{prankHW}). 
Given a prime $p$ and integers $g$ and $f$ with $0 \leq f \leq g$, a result of Faber and 
Van der Geer
\cite[Theorem 2.3]{FaberVdG2004} implies that
there exists a curve over $\overline{{\mathbb F}}_p$ of genus $g$ and $p$-rank $f$.

We assume that $p$ is odd from now on. 
Consider an unramified double cover
$$\pi: Y \longrightarrow X.$$
Then $\mathrm{Jac}(Y)$ is isogenous to $\mathrm{Jac}(X) \oplus P_{\pi}$
where $P_\pi$ is the \emph{Prym variety} of $\pi$.  
In this context, $P_\pi$ is a principally polarized abelian variety of dimension $g-1$.
The $p$-rank $f'$ of $P_{\pi}$ satisfies $0 \leq f' \leq g-1$.
Since the $p$-rank is an isogeny invariant, the $p$-rank of $Y$ equals $f+f'$. 

Now the following question arises naturally.
\begin{question} \label{Qnatural}
Suppose that $p$ is an odd prime, and $g,f,f'$ are integers such that $g \geq 2$, $0 \leq f \leq g$, and $0 \leq f' \leq g-1$. 
Does there exist a curve $X$ defined over ${\overline{\mathbb F}}_p$ of genus $g$ and $p$-rank $f$  
having an unramified double cover $\pi: Y \longrightarrow X$ such that 
$P_{\pi}$ has $p$-rank $f'$?
\end{question}

The answer to Question \ref{Qnatural} is yes for $p \geq 3$ and $0 \leq f \leq g$ 
under the following restrictions:\\
\begin{itemize}
\item[-] when $g=2$ \cite[Proposition 6.1]{OzmanPries2016},
unless $p=3$ and $f=0,1$ and $f'=0$, in which case the answer is no \cite[Example 7.1]{FaberVdG2004}; 
\item[-] when $g \geq 3$ and $f'=g-1$, as a special case of \cite[Theorem 1.1(1)]{OzmanPries2016};
\item[-] when $g \geq 3$ and $f'=g-2$ (with $f \geq 2$ when $p=3$), 
by \cite[Theorem 7.1]{OzmanPries2016};
\item[-] when $p\geq 5$ and $g \geq 4$ and $\dfrac{g}{2} -1 \leq f' \leq g-3$, 
by \cite[Corollary 7.3]{OzmanPries2016}. 
\end{itemize}

In this paper, we study the first open case of Question \ref{Qnatural}, 
which occurs when $X$ has genus $g=3$ and $P_\pi$ has $p$-rank $0$. 
We focus on the case that $X$ is a smooth plane quartic or, equivalently, that $X$ is not
hyperelliptic.  
(If $X$ has genus $g=3$, the Riemann-Hurwitz formula implies 
that the genus of $Y$ is $5$ and hence the dimension of $P_\pi$ is $2$.) 

Given an unramified double cover $\pi:Y \to X$ of a smooth plane quartic $X$, 
in Lemma~\ref{forwardHW}
we use work of Bruin~\cite{Bruin2008} to analyze the Hasse-Witt matrices of $X$ and $P_\pi$ simultaneously, in terms of the quadratic forms that determine $\pi$.
As an application, we verify that the answer to Question~\ref{Qnatural} is yes when $g=3$ and $3 \leq p \leq 19$ in Proposition~\ref{R3f0_forwardmethod}. 

Given a genus $2$ curve $Z:z^2=D(x)$,
it is possible to describe all smooth plane quartic curves $X$ having an unramified double cover
$\pi:Y \to X$ whose Prym variety $P_\pi$ is isomorphic to ${\rm Jac}(Z)$. 
Specifically, 
the Kummer variety $K={\rm Jac}(Z)/\langle -1 \rangle$ of ${\rm Jac}(Z)$
is a quartic surface in ${\mathbb P}^3$.
Each smooth plane quartic $X$ having an unramified double cover $\pi:Y \to X$ such that 
$P_\pi \simeq {\rm Jac}(Z)$
arises as the intersection $V \cap K$ for some plane $V \subset {\mathbb P}^3$
\cite{Mumford, Verra, CasselsFlynn}. 
Building on work of Kudo and Harashita \cite{Kudo}, we provide a method to determine the Hasse-Witt matrix of $X$ from $V$ and $Z$ in Proposition \ref{prop:Hasse-Witt}.

Suppose that $\pi: Y \to X$ is an unramified double cover with $P_\pi \simeq {\rm Jac}(Z)$ as in the previous paragraph.
In Section~\ref{sec:backwards}, we first choose $Z:z^2=x^6-1$
and verify in Proposition~\ref{Pexist5mod6} that the answer to Question~\ref{Qnatural} is yes when $(g,f,f')=(3,3,0)$ and $p \equiv 5 \bmod 6$. 

In the second part of Section~\ref{sec:backwards}, for an arbitrary smooth curve $Z$ of genus $2$, we use commutative algebra to analyze the condition that 
(*) $X$ is non-ordinary and $Z$ has $p$-rank $0$.
In Proposition \ref{Phomogeneous}, we prove that condition (*) 
is equivalent to the vanishing of $4$ homogeneous polynomials of degree $(p+1)(p-1)/2$ 
in the coefficients on $D(x)$ and the vanishing of one homogeneous polynomial of degree $6(p-1)$ in the coefficients of $D(x)$ and $V$. 
As an application, when $p=3$, we give an explicit characterization of the curves $Z$ and planes $V$ for which $\pi:Y \to X$ satisfies condition (*), see Section \ref{S20}. 

Finally, we apply the new results described above for genus $3$ curves 
in small characteristic to study the $p$-ranks of Prym varieties of smooth curves of 
arbitrary genus $g \geq 3$. For this, we use an inductive method developed in \cite{AP:08}.
This yields Corollary \ref{Cmaincor}, which extends \cite[Corollary 7.3]{OzmanPries2016}
for small $p$ and gives the following application.

\begin{corollary} \label{Cintro}
Let $3 \leq p \leq 19$. 
The answer to Question \ref{Qnatural} is yes, for any $g \geq 2$, under the following conditions on $(f,f')$: 
\begin{enumerate}
\item If $g=3r$ and $(f, f')$ is such that $2 r \leq f \leq g$ and $r -1 \leq f' \leq g-1$;
\item If $g=3r + 2$ and $(f,f')$ is such that 
$2r \leq f \leq g$ and $r \leq f' \leq g-1$, 
(with $f \geq 2r+2$ when $p=3$);
\item If $g=3r + 4$ and $(f, f')$ is such that 
$2 r \leq f \leq g$ 
(with $f \geq 2r+4$ when $p=3$)
and $r+1 \leq f' \leq g-1$.
\end{enumerate}
\end{corollary}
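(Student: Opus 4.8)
The plan is to prove Corollary \ref{Cintro} as a consequence of the more general Corollary \ref{Cmaincor}, which itself relies on the inductive method of \cite{AP:08} combined with the base cases established earlier in the paper for $g=3$ and small $p$.

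First I would set up the inductive framework. The key input is the method from \cite{AP:08}, which allows one to build unramified double covers of curves of larger genus by gluing or degenerating covers of smaller genus along the boundary of the relevant moduli space. Concretely, given unramified double covers $\pi_i:Y_i \to X_i$ of curves $X_i$ of genus $g_i$ with $\mathrm{Jac}(X_i)$ of $p$-rank $f_i$ and Prym variety $P_{\pi_i}$ of $p$-rank $f'_i$, one expects to produce an unramified double cover of a curve of genus $g_1+g_2$ (or $g_1+g_2+1$, depending on the clutching) whose Jacobian has $p$-rank $f_1+f_2$ and whose Prym has $p$-rank $f'_1+f'_2$. The role of the moduli-space dimension count is to guarantee, via the purity and dimension results on $p$-rank strata, that the generic point of a boundary component admits a deformation into the interior realizing the target $p$-ranks. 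I would treat Corollary \ref{Cmaincor} as the statement that packages this induction, and so the work reduces to supplying suitable base cases and checking that the arithmetic of the induction yields exactly the ranges for $(f,f')$ claimed in items (1)--(3).

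Next I would assemble the base cases. The three congruence classes $g=3r$, $g=3r+2$, and $g=3r+4$ suggest that the induction proceeds in blocks of genus $3$, using the genus-$3$ results of this paper (Proposition \ref{R3f0_forwardmethod} and the surrounding $g=3$ analysis) as the elementary building block, supplemented by the $g=2$ case from \cite[Proposition 6.1]{OzmanPries2016} to account for the residues $+2$, and by a single-curve or genus-$4$ adjustment for the residue $+4$. For each building block of genus $3$ one has access, for $3 \leq p \leq 19$, to covers realizing every pair $(f,f')$ with $0 \leq f \leq 3$ and $0 \leq f' \leq 2$, including the hard case $(3,0)$. Writing $g=3r$, stacking $r$ such genus-$3$ blocks and summing the contributions gives Jacobian $p$-rank anywhere from $0$ to $3r=g$ and Prym $p$-rank anywhere from $0$ to $2r=g-r$; the stated lower bounds $f \geq 2r$ and $f' \geq r-1$ are exactly what the induction forces once one requires at least a minimal amount of ordinarity in each block. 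The cases $g=3r+2$ and $g=3r+4$ are handled by appending one genus-$2$ (resp.\ one extra genus-$2$ plus genus-$2$, or a genus-$4$) block, which shifts the attainable ranges by the corresponding amounts and explains the different lower bounds, as well as the extra hypotheses when $p=3$ (where the genus-$2$ building block is obstructed for $(f,f')=(0,0)$ and $(1,0)$ by \cite[Example 7.1]{FaberVdG2004}).

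The main obstacle I anticipate is verifying that the inductive gluing actually achieves the full claimed ranges for $(f,f')$ rather than only their endpoints, and in particular tracking the $p=3$ exceptions correctly. The clutching construction of \cite{AP:08} typically realizes $p$-ranks additively, but one must confirm that the $p$-rank strata of the Prym moduli space have the expected codimension so that a cover with the target Prym $p$-rank genuinely exists in a neighborhood of the boundary point; this is where the theoretical $p$-rank stratification results are essential and where a naive dimension count could fail. I would carefully check that the lower bounds in each case are not merely necessary but are simultaneously achievable by a single choice of cover, paying special attention to the $p=3$ restrictions inherited from the genus-$2$ base case, and then deduce the affirmative answer to Question \ref{Qnatural} for every $(f,f')$ in the stated ranges.
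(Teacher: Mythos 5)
Your overall architecture matches the paper's: Corollary \ref{Cintro} is deduced from Corollary \ref{Cmaincor}, which packages an induction (Theorem \ref{Tbigg}) that clutches genus-$3$ and genus-$2$ blocks along the boundary of $\bar{\mathcal{R}}_g$ and uses purity to deform back to smooth curves. But there is a genuine error at the heart of your inductive step: you assert that gluing covers with Prym $p$-ranks $f'_1$ and $f'_2$ yields a cover whose Prym has $p$-rank $f'_1+f'_2$. The correct statement, which is \eqref{Eclutch} in the paper, is that the clutching lands in $\Delta_{g_1:g_2}[\bar{\mathcal{R}}_{g}^{(f_1+f_2,\,f'_1+f'_2+1)}]$: the curve $Y$ acquires two nodes over the single node of $X$, contributing an extra toric factor to $\mathrm{Jac}(Y)$ that lies in the Prym, so the Prym $p$-rank goes up by $1$ at each clutching. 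Your additive rule is not merely imprecise, it is inconsistent with the bounds you set out to prove: with additivity, stacking $r$ genus-$3$ blocks each with $f'_i=0$ would force the lower bound $f'\geq 0$, whereas the actual bounds $r-1$, $r$, $r+1$ in items (1)--(3) are exactly $(r+s)-1$ with $s=0,1,2$, i.e.\ one $+1$ per each of the $r+s-1$ clutchings. Relatedly, your explanation of $f\geq 2r$ as ``a minimal amount of ordinarity in each block'' is not the real source of that bound, and the upper range $f'\leq g-1$ is not reached by the clutching at all but by Proposition \ref{Pgoup} (from \cite[Proposition 5.2]{OzmanPries2016}), which you never invoke; the induction itself only treats $f'=r+s-1$.

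The second gap is that you feed only \emph{non-emptiness} of the genus-$3$ base strata (Proposition \ref{R3f0_forwardmethod}) into the induction, while Theorem \ref{Tbigg} requires the stronger hypothesis that $\mathcal{R}_3^{(f_1,0)}$ has a component of dimension \emph{exactly} $1+f_1$ for $f_1=2,3$; this is Proposition \ref{P20in30}, proved via Verra's description of the fibers of the Prym map and the $1$-dimensionality (and, for $p\leq 11$, irreducibility) of $\mathcal{A}_2^0$, and it is precisely where the restriction $f\geq 2r$ (hence $f_0=2$) and the restriction $3\leq p\leq 19$ enter. The exact dimension is what makes your ``deform into the interior'' step work: by Lemma \ref{Linductive} the clutched locus $\mathcal{K}$ has dimension $d_1+d_2+2$ and lies in a component $\mathcal{W}$ of $\bar{\mathcal{R}}_g^{(f,f')}$ of dimension at most $d_1+d_2+3$, while purity gives $\dim\mathcal{W}\geq g-2+f+f'$; only when $d_1,d_2$ attain the purity lower bounds do these pinch to $\dim\mathcal{W}=\dim\mathcal{K}+1$, forcing the generic point of $\mathcal{W}$ off the boundary, where it represents a smooth curve with the target pair $(f,f')$. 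With merely non-empty base strata this count collapses and nothing prevents $\mathcal{W}$ from being contained in the boundary. Your $p=3$ discussion is essentially right ($\mathcal{R}_2^{(0,0)}$ and $\mathcal{R}_2^{(1,0)}$ are empty by \cite[Section 7.1]{FaberVdG2004}), but the fix in the paper is concrete: in the genus-$2$ clutching step one takes $f_1=2$ when $p=3$, which is what produces the shifted hypothesis $f\geq 2r+2s$.
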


All of the existence results for $p$-ranks described above are proven using a 
geometric analysis of the $p$-rank stratification of 
moduli spaces of curves and their unramified covers.
For example, \cite[Theorem 2.3]{FaberVdG2004} 
shows that the $p$-rank $f$ stratum $\mathcal{M}_g^f$ of $\mathcal{M}_g$ is non-empty and each component 
has dimension $2g-3+f$, see also \cite[Section 3]{AP:08}.

Consider the moduli space $\mathcal{R}_g$ whose points represent unramified double covers 
$\pi: Y \longrightarrow X$, where $X$ is a smooth curve of genus $g$.
Let $\mathcal{R}_g^{(f,f')}$ denote the stratum of $\mathcal{R}_g$ representing points where $X$ has $p$-rank $f$ and $P_\pi$
has $p$-rank $f'$.
To answer Question \ref{Qnatural}, it suffices to prove that $\mathcal{R}_g^{(f,f')}$ is non-empty in characteristic $p$.

Suppose that $\mathcal{R}_g^{(f,f')}$ is non-empty; then one can study its dimension.  
As an application of purity results for the Newton polygon stratification, \cite[Proposition 5.2]{OzmanPries2016} shows that:
if $\mathcal{R}_g^{(f,f')}$ is non-empty, then each of its components
has dimension at least $g-2+f+f'$.

In fact, the dimension of $\mathcal{R}_g^{(f,f')}$ attains this lower bound in the following cases:
\begin{itemize}
\item[-] when $f'=g-1$, then each component of $\mathcal{R}_g^{(f,f')}$ has dimension $2g-3+f$
as a special case of \cite[Theorem 1.1(1)]{OzmanPries2016};
\item[-] when $f'=g-2$, with $f \geq 2$ when $p=3$, 
then each component of $\mathcal{R}_g^{(f,f')}$ has dimension $2g-4+f$
\cite[Theorem 7.1]{OzmanPries2016};
\item[-] when $p \geq 5$ and $\dfrac{g}{2} -1 \leq f' \leq g-3$, 
then at least one component of $\mathcal{R}_g^{(f,f')}$ has dimension $g-2+f +f'$
\cite[Corollary 7.3]{OzmanPries2016}.
\end{itemize}
We extend these geometric results in Section \ref{sec:induction}.
In Theorem \ref{Tbigg}, for any prime $p$, we prove an inductive result that allows one to leverage information when $g=3$ about 
$\mathcal{R}_3^{(f,0)}$ into information about $\mathcal{R}_g^{(f,f')}$ for arbitrarily large $g$.
The final result is Corollary \ref{Cmaincor}; it allows us to prove the existence of unramified double covers $\pi:Y \to X$ with control over the $p$-rank $f$ of $X$ and the $p$-rank $f'$ of $P_\pi$ as long as $f$ is bigger than approximately $2g/3$ and $f'$ is bigger than approximately $g/3$.

\begin{corollary} 
If $3 \leq p \leq 19$, the stratum $\mathcal{R}_g^{(f, f')}$ 
has a (non-empty) component of dimension $g-2 +f +f'$
for all $g \geq 2$ under the conditions on $(f,f')$ found in Corollary \ref{Cintro}.
\end{corollary}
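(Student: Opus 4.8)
The plan is to pin down the dimension as the common value of an automatic lower bound and an explicit upper bound. The lower bound is purity: by \cite[Proposition 5.2]{OzmanPries2016}, every non-empty component of $\mathcal{R}_g^{(f,f')}$ has dimension at least $g-2+f+f'$. So it suffices to exhibit, under the hypotheses of Corollary \ref{Cintro}, one non-empty component of dimension \emph{at most} $g-2+f+f'$; equality is then forced. For $3\leq p\leq 19$ this is essentially the content of Corollary \ref{Cmaincor}, so my real task is to run its inductive engine, Theorem \ref{Tbigg}, and to confirm that the genus-$3$ input it requires is available for these primes.

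First I would set up the degeneration. Writing $g=3r$, $3r+2$, or $3r+4$ according to the residue of $g$ modulo $3$, I construct a point on the boundary of $\overline{\mathcal{R}}_g$ represented by an admissible double cover whose base is a chain of $r$ genus-$3$ curves together with, respectively, zero, one, or two genus-$2$ curves, each carrying an unramified double cover and glued so that the cover \emph{splits} at every node. Two additivity facts drive the construction. Since the chain is of compact type, the $p$-rank of $X$ is the sum of the $p$-ranks of the components. At a split node the cover acquires two nodes, so the Prym of the reducible cover is a semi-abelian variety whose toric rank equals the number of nodes; as tori are ordinary, the $p$-rank of $P_\pi$ equals the number of nodes plus the sum of the block Prym $p$-ranks. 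With $r-1$, $r$, or $r+1$ nodes, taking every block to have Prym $p$-rank $0$ yields the extreme values $f'=r-1,\,r,\,r+1$---exactly the left endpoints in Corollary \ref{Cintro}---while raising the block Prym $p$-ranks sweeps $f'$ up to $g-1$.

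The dimension claim is then a codimension count. A genus-$3$ block stratum $\mathcal{R}_3^{(f_i,f_i')}$ lies in the six-dimensional $\mathcal{R}_3$ with expected codimension $5-f_i-f_i'$, and a genus-$2$ block stratum lies in $\mathcal{R}_2$ with expected codimension $3-f_i-f_i'$; summing these along the chain, and accounting for the nodal contribution to $f'$, yields precisely $2g-1-f-f'$, the expected codimension of $\mathcal{R}_g^{(f,f')}$ in $\mathcal{R}_g$. One then smooths the nodes: since the $p$-rank is lower semicontinuous and the boundary stratum already has the expected dimension, a generic smoothing remains on $\mathcal{R}_g^{(f,f')}$ without increasing the $p$-rank, producing a component of dimension at most $g-2+f+f'$. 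The numerical ranges and the $p=3$ caveats in Corollary \ref{Cintro} record exactly which blocks are available with controlled dimension: the ordinary and near-ordinary Prym blocks come from \cite[Theorem 1.1(1)]{OzmanPries2016} and \cite[Theorem 7.1]{OzmanPries2016} (the latter needing $f_i\geq 2$ when $p=3$), the genus-$2$ blocks from \cite[Proposition 6.1]{OzmanPries2016} (whose $p=3$, $f'=0$ exception forces $f_i=2$, producing the extra $+2$ and $+4$ shifts in cases (2) and (3)), and the Prym-$p$-rank-$0$ genus-$3$ seeds needed for the smallest $f'$ from Proposition \ref{R3f0_forwardmethod}. The clean rectangle $f\geq 2r$ is the common domain on which all of these coexist, the binding constraint arising at minimal $f'$, where every genus-$3$ block must be a Prym-$0$ seed of $X$-$p$-rank at least $2$.

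The step I expect to be the crux is the exactness of the dimension for the Prym-$p$-rank-$0$ seeds. Purity gives $\mathcal{R}_3^{(f_i,0)}$ dimension only $\geq 1+f_i$, and any excess would propagate unchanged through the clutching and spoil the match with the purity bound at genus $g$; what is needed is a component of dimension exactly $1+f_i$. I would extract this from the Hasse-Witt analysis of Lemma \ref{forwardHW} behind Proposition \ref{R3f0_forwardmethod}: supersingularity of $P_\pi$ is the vanishing of the relevant Hasse-Witt entries, and one must check (for each prime in range, with $f_i\geq 2$) that these impose the expected codimension $2$ rather than dropping the dimension further. Granting this controlled-dimension input for the seeds, the induction of Theorem \ref{Tbigg} closes, and the assertion is the specialization of Corollary \ref{Cmaincor} to $3\leq p\leq 19$.
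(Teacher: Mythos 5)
Your overall architecture is the same as the paper's: purity from \cite[Proposition 5.2]{OzmanPries2016} for the lower bound, the clutching induction of Theorem \ref{Tbigg} (with the $g=3r+2s$ decomposition, the $+1$ toric contribution to $f'$ at each split node, and the dimension count via Lemma \ref{Linductive}) for the upper bound, and the reduction of everything to the existence of components of $\mathcal{R}_3^{(f_1,0)}$ of dimension \emph{exactly} $1+f_1$ for $f_1=2,3$. You also correctly identify that this seed-dimension statement is the crux, since purity only gives the lower bound and any excess dimension would propagate through the clutching.

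The gap is that you do not prove the crux, and the method you sketch for it is not known to work. You propose to verify, from the Hasse--Witt analysis of Lemma \ref{forwardHW}, that the supersingularity conditions on $P_\pi$ together with the drop in $\rk H_X$ ``impose the expected codimension''; but this is precisely the hard open problem the paper circumvents --- its own commutative-algebra/Hasse--Witt analysis in Section \ref{Sp3fiber} (Propositions \ref{Lplane3} and \ref{Pfixalpha}) falls short of such a statement even for $p=3$, explicitly leaving open the possibility of degenerate planes $V$ or parameters $\alpha$. The paper instead proves the upper bound geometrically in Proposition \ref{P20in30}: by Verra's description (Section \ref{SVerra}), the fiber of $Pr_3$ over a component $\mathcal{Z}$ of $\mathcal{A}_2^0$ has a unique $3$-dimensional component $N_{\mathcal Z}$, and since $\dim\mathcal{A}_2^0=1$, one gets $\dim N_{\mathcal Z}=4$; this bounds $\mathcal{R}_3^{(3,0)}$, and for $f=2$ one needs additionally that the \emph{generic} point of the relevant $N_{\mathcal{Z}'}$ has ordinary $X$, so that $S_2$ is a proper closed subset of dimension at most $3$. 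That genericity is where the restriction $3\leq p\leq 19$ enters: for $3\leq p\leq 11$ it follows from the irreducibility of $\mathcal{A}_2^0$ \cite[Theorem 5.8]{katsuraoort}, while for $13\leq p\leq 19$, where $\mathcal{A}_2^0$ may be reducible, the paper uses the fact that the tables in Section \ref{tables} realize the $(3,0)$ and $(2,0)$ examples with the \emph{same} genus-$2$ curve $Z$, forcing the two examples into the same component $N_{\mathcal Z}$. Your proposal misses this reducibility issue entirely, and any argument that reasons ``generically over the $p$-rank-$0$ locus of $\mathcal{A}_2$'' must confront it; a purely local codimension check at the example points could in principle substitute, but you have neither formulated nor carried out such a computation, so as written the proof does not close.
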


The condition on $p$ is needed to show that $\mathcal{R}_3^{(2,0)}$ has dimension $3$; more specifically, that
there is a 3-dimensional family of smooth plane quartics $X$ with $p$-rank $2$ 
having an unramified double cover $\pi:Y \to X$ such that $P_\pi$ has $p$-rank $0$.
We expect this to be true for all odd primes $p$ but were only able to prove it computationally for $3 \leq p \leq 19$.

\subsection{Outline of the paper}

Here is the material contained in each section.

Section 2: Definitions and background material.

Section 3: The Hasse-Witt matrices of $X$ and $P_\pi$ in terms of quadratic forms.  Examples of unramified double covers $\pi:Y \to X$ with given $p$-ranks 
$(f,f')$ for small $p$.

Section 4: The Hasse-Witt matrix of a plane quartic $X$ defined as the intersection of a 
plane and quartic surface in ${\mathbb P}^3$.

Section 5: The Kummer surface of an abelian surface ${\rm Jac}(Z)$, information about the determinant 
of the Hasse-Witt matrix from Section 4, 
the non-emptiness of ${\mathcal R}_3^{(3,0)}$ when $p \equiv 5 \bmod 6$, 
and information about the geometry of ${\mathcal R}_3^{(2,0)}$ when $p=3$. 

Section 6: An auxiliary result about the number of points on the Kummer surface of a supersingular curve of genus $2$ defined over a finite field.

Section 7: Results about $p$-ranks of Pryms of unramified double covers of curves 
of arbitrary genus for small $p$ proven inductively from results in Sections 3.

\subsection*{Acknowledgements}
This project began at the \emph{Women in Numbers Europe 2} workshop in the Lorentz Center, Leiden. We are very grateful to the Lorentz Center for their hospitality and support. Elias benefited from a Leibniz fellowship at the Oberwolfach Research Institute during part of this work. Ozman was partially supported by by Bogazici University Research Fund Grant Number 10842 and  by the BAGEP Award of the Science Academy with funding supplied by Mehve\c{s} Demiren in memory of Selim Demiren. Pries was partially supported by NSF grant DMS-15-02227.  We would like to thank Bouw and Bruin for helpful conversations.

\section{Background} \label{sec:background}

\subsection{Notation}

Let $k$ be an algebraically closed field of characteristic $p >0$.
Unless stated otherwise, every curve is a smooth projective connected $k$-curve.
Suppose that $C$ is a curve of genus $g \geq 1$.

\subsection{The Cartier-Manin matrix} \label{SCM}

Let $L$ be the function field of $C/k$. Since $k$ is perfect, there exists a separating variable $x\in L\setminus k$ such that $L/k(x)$ is algebraic and separable. It follows that $L=L^p(x)$ and hence every element $z\in L$ can be written uniquely in the form
\[z=z_0^p+z_1^px+\dots +z_{p-1}^px^{p-1}\]
with $z_0,\dots, z_{p-1}\in L$. The \emph{Cartier operator} $\mathscr{C}$ is defined on differentials of the first kind by
\[\mathscr{C}((z_0^p+z_1^px+\dots +z_{p-1}^px^{p-1})dx)=z_{p-1}dx.\]
The Cartier operator is $\frac{1}{p}$-linear, meaning that $\mathscr{C}(a^p\omega_1+b^p\omega_2)=a\mathscr{C}(\omega_1)+b\mathscr{C}(\omega_2)$ for all $a,b\in L$ and all $\omega_1,\omega_2\in \Omega^1(L)$. 
It is independent of the choice of separating variable
and hence gives a well-defined map on the $k$-vector space of regular differentials on $C$, $\mathscr{C}:\mathrm{H}^0(C,\Omega^1_C)\to \mathrm{H}^0(C,\Omega^1_C)$.

\begin{definition} \label{Dcartiermanin}
Let $\omega_1,\dots, \omega_g$ be a $k$-basis for $\mathrm{H}^0(C,\Omega^1_C)$. Write $\mathscr{C}(\omega_j)=\sum_{i=1}^g c_{ij}\omega_i$ with $c_{ij}\in k$.
The Cartier-Manin matrix of $C$ with respect to the basis $\omega_1,\dots, \omega_g$ is the matrix $(c_{ij}^p)_{i,j}$.
\end{definition}

\begin{remark}
The Cartier-Manin matrix depends on the choice of basis. Let $\omega_1',\dots,\omega_g'$ be another $k$-basis for 
$\mathrm{H}^0(C,\Omega^1_C)$ and let $T=(t_{ij})$ be the change of basis matrix so that $\omega_j=\sum_{i=1}^g{t_{ij} \omega_i'}$. Then the Cartier-Manin matrix with respect to the basis $\omega_1',\dots,\omega_g'$ is $T^{(p)}(c_{ij}^p)T^{-1}$, where $T^{(p)}$ denotes the matrix obtained from $T$ by taking the $p$th power of each of its entries.
\end{remark}

\subsubsection{The Cartier-Manin matrix of a hyperelliptic curve} \label{2.2}

Let $p$ be odd and let $Z$ be a hyperelliptic curve of genus $g$. Then $Z$ has an equation of the form $z^2=f(x)$ for a separable polynomial $f(x) \in k[x]$ having degree $2g+1$ or $2g+2$. 
Write $\omega_i=\frac{x^{i-1}}{y}dx$, so that $\{\omega_1, \ldots, \omega_g\}$
is a basis for $\mathrm{H}^0(Z,\Omega^1_Z)$.
  
\begin{proposition} (Yui) \cite[Proposition 2.3]{Yui}
\label{prop:yui}
Let $c_s$ denote the coefficient of $x^s$ in the expansion of $f(x)^{(p-1)/2}$. Then the Cartier-Manin matrix of $Z$ is $A_0=(c_{ip-j})_{i,j}$. 
\end{proposition}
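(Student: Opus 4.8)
The plan is to prove Yui's formula by computing the Cartier operator explicitly on the basis differentials $\omega_i = x^{i-1}\,dx/z$ of $\mathrm{H}^0(Z,\Omega^1_Z)$. First I would use the hyperelliptic equation $z^2 = f(x)$ to rewrite each $\omega_i$ in a form where the Cartier operator can be applied directly. Since $p$ is odd, write $z^{p-1} = (z^2)^{(p-1)/2} = f(x)^{(p-1)/2}$, so that
\begin{equation*}
\omega_i = \frac{x^{i-1}}{z}\,dx = \frac{x^{i-1} z^{p-1}}{z^p}\,dx = \frac{x^{i-1} f(x)^{(p-1)/2}}{z^p}\,dx.
\end{equation*}
The key point is that $z^p \in L^p$, so the factor $1/z^p = (1/z)^p$ can be pulled out of the Cartier operator by $\tfrac1p$-linearity; likewise $dx$ is already in the right shape because the Cartier operator on differentials of the form $x^s\,dx$ extracts only the coefficient of $x^{p-1}\,dx$.

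Next I would expand $x^{i-1} f(x)^{(p-1)/2} = \sum_s c_s\, x^{s+i-1}$, where $c_s$ is the coefficient of $x^s$ in $f(x)^{(p-1)/2}$ as in the statement. Recalling that $\mathscr{C}((\sum_m a_m x^m)\,dx) $ picks out the terms where the exponent $m$ is congruent to $p-1$ modulo $p$, and more precisely that $\mathscr{C}(x^{rp + (p-1)}\,dx) = x^{r}\,dx$ while $\mathscr{C}(x^m\,dx)=0$ when $m \not\equiv -1 \pmod p$, I would apply $\mathscr{C}$ term by term. Combining this with the extracted $(1/z)$ factor, the surviving terms contribute $x^{r}/z\,dx = \omega_{r+1}$ whenever $s + i - 1 = rp + (p-1)$, i.e. $s = (r+1)p - i$. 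Setting the output index $j = r+1$ (so that $\omega_j = x^{j-1}/z\,dx$ ranges over the basis), the surviving exponent is $s = jp - i$, giving
\begin{equation*}
\mathscr{C}(\omega_i) = \sum_{j=1}^{g} c_{jp - i}\, \omega_j.
\end{equation*}

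The main subtlety I anticipate is bookkeeping with the indices and the transpose. The Cartier-Manin matrix of Definition \ref{Dcartiermanin} is built from the coefficients $c_{ij}$ in $\mathscr{C}(\omega_j) = \sum_i c_{ij}\omega_i$, whereas the computation above most naturally expresses $\mathscr{C}(\omega_i)$ in terms of the $\omega_j$; care is needed to match the roles of the two indices so that the entry in position $(i,j)$ of the final matrix is indeed $c_{ip - j}$ as stated. I would also need to confirm that the range of indices $1 \le s + i - 1 \le \deg(x^{i-1} f^{(p-1)/2})$ is exactly accounted for by $1 \le j \le g$, using $1 \le i \le g$ together with the degree bound $\deg f \le 2g+2$; the boundary terms (those with $s$ just outside the range, or the top-degree coefficient when $\deg f = 2g+2$) are where an off-by-one error would most easily creep in, so that is the step I would check most carefully. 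Finally, since Definition \ref{Dcartiermanin} records $(c_{ij}^p)$ rather than $(c_{ij})$, I note that Yui's convention (following \cite{Yui}) gives the matrix $A_0 = (c_{ip-j})_{i,j}$ directly in terms of the coefficients $c_s$ of $f^{(p-1)/2}$; I would simply reconcile the two normalizations, observing that the $p$-power discrepancy is absorbed by the $\tfrac1p$-linearity already used in pulling out $1/z^p$.
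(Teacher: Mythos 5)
The paper does not prove this statement at all --- it is quoted directly from Yui's paper as a known result --- and your computation is precisely the standard argument (and essentially Yui's original one): write $1/z = f(x)^{(p-1)/2}/z^p$, pull $(1/z)^p$ out by $\tfrac1p$-linearity, and extract the exponents $\equiv p-1 \bmod p$. Your index bookkeeping is right, including the check that $s = jp-i$ with $1\le i\le g$ forces $1\le j\le g$ even when $\deg f = 2g+2$. The one imprecision is your displayed formula: term-by-term, $\tfrac1p$-linearity takes $p$-th roots of the coefficients, so the correct intermediate statement is
\begin{equation*}
\mathscr{C}(\omega_i) = \sum_{j=1}^{g} c_{jp-i}^{1/p}\,\omega_j,
\end{equation*}
not $\sum_j c_{jp-i}\,\omega_j$; it is then exactly the $p$-th power appearing in the paper's Definition~\ref{Dcartiermanin} (Cartier--Manin matrix $=(c_{ij}^p)$ where $\mathscr{C}(\omega_j)=\sum_i c_{ij}\omega_i$) that cancels these roots and yields $A_0=(c_{ip-j})_{i,j}$ on the nose. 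You gesture at this reconciliation in your final sentence, but attribute the cancellation to the $1/z^p$ step rather than to the $p$-th roots on the $c_s$ themselves; with that one correction the proof is complete.
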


\subsection{The Hasse-Witt matrix} \label{SHW}

The \emph{(absolute) Frobenius} $F$ of $C$ is the morphism of schemes given by the identity on the underlying topological space and $f\mapsto f^p$ on $\mathcal{O}_C$. We write $F^*$ for the induced endomorphism of $\mathrm{H}^1(C,\mathcal{O}_C)$. It is a $p$-linear map, meaning that $F^*(\lambda \xi)=\lambda^pF^*\xi$ for all $\lambda\in k$ and all $\xi\in \mathrm{H}^1(C,\mathcal{O}_C)$. 

\begin{proposition} \label{prop:duality} \cite[Proposition 9]{Serre}
Serre duality gives a perfect pairing \[\langle\ ,\ \rangle:\mathrm{H}^1(C,\mathcal{O}_C)\times\mathrm{H}^0(C,\Omega^1_C)\to k\] such that
\[ \langle F^* \xi,\omega \rangle = \langle \xi, \mathscr{C} \omega \rangle^p \]
for all $\xi\in \mathrm{H}^1(C,\mathcal{O}_C)$ and all $\omega\in \mathrm{H}^0(C,\Omega^1_C)$. 
\end{proposition}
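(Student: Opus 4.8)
The plan is to make the Serre duality pairing explicit via residues and then reduce the desired adjointness to a purely local identity at each point of $C$. Concretely, I would represent a class $\xi \in \mathrm{H}^1(C,\mathcal{O}_C)$ by a repartition (adèle) $(r_P)_P$, or equivalently by a \v{C}ech $1$-cocycle for an affine cover, so that for $\omega \in \mathrm{H}^0(C,\Omega^1_C)$ the pairing takes the form $\langle \xi,\omega\rangle = \sum_{P} \Res_P(r_P\,\omega)$, the sum ranging over the finitely many closed points where $r_P\,\omega$ has a pole. This is the standard residue description of Serre duality for curves, and the residue theorem guarantees that it is well defined on cohomology.

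Next I would identify the action of $F^*$ on representatives. Since $F^*$ is induced by $f \mapsto f^p$ on $\mathcal{O}_C$, the class $F^*\xi$ is represented by the repartition $(r_P^p)_P$, so $\langle F^*\xi,\omega\rangle = \sum_P \Res_P(r_P^p\,\omega)$. On the other side, because $k$ has characteristic $p$ the map $a \mapsto a^p$ is additive, whence $\langle \xi,\mathscr{C}\omega\rangle^p = \big(\sum_P \Res_P(r_P\,\mathscr{C}\omega)\big)^p = \sum_P \big(\Res_P(r_P\,\mathscr{C}\omega)\big)^p$. Comparing the two expressions term by term, the proposition reduces to the local statement that, for a local function $h$ and a local differential $\omega$ at a point $P$, one has $\Res_P(h^p\,\omega) = \big(\Res_P(h\,\mathscr{C}\omega)\big)^p$.

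To prove this local identity I would expand everything in a local coordinate $t$ at $P$. Writing $h = \sum_j b_j t^j$ and $\omega = \sum_i c_i t^i\,dt$, one has $h^p = \sum_j b_j^p t^{jp}$, so $\Res_P(h^p\omega)$ is the coefficient of $t^{-1}$, namely $\sum_j b_j^p\,c_{-1-jp}$. For the right-hand side I would use the local description of the Cartier operator, which sends $t^i\,dt$ to $0$ unless $p \mid (i+1)$ and sends $t^{np-1}\,dt$ to $t^{n-1}\,dt$; combined with its $\tfrac{1}{p}$-linearity and the perfectness of $k$, and with the fact that $\mathscr{C}$ is compatible with completion at $P$, this gives $\mathscr{C}\omega = \sum_n c_{np-1}^{1/p}\,t^{n-1}\,dt$. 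Then $\Res_P(h\,\mathscr{C}\omega) = \sum_j b_j\,c_{-jp-1}^{1/p}$, and raising to the $p$-th power recovers $\sum_j b_j^p\,c_{-1-jp}$, matching the left-hand side. Summing over $P$ then yields the global relation.

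The main obstacle is not the computation, which is routine once set up, but rather establishing the explicit residue form of Serre duality and verifying that $F^*$ is computed by $p$-th powering cocycle representatives; this requires care with the repartition (or \v{C}ech) model of $\mathrm{H}^1(C,\mathcal{O}_C)$ and with the bookkeeping of $p$-linear versus $\tfrac{1}{p}$-linear maps, using throughout that $k$ is perfect so that the $p$-th power and $p$-th root operations are well defined and mutually inverse. This is precisely the content of Serre's \cite[Proposition 9]{Serre}, on which we rely.
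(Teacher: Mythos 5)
Your proof is correct: via the repartition (adèle) description of Serre duality, with $\langle\xi,\omega\rangle=\sum_P\Res_P(r_P\,\omega)$ and $F^*$ acting by $r_P\mapsto r_P^p$, the adjointness reduces to the local identity $\Res_P(h^p\,\omega)=\bigl(\Res_P(h\,\mathscr{C}\omega)\bigr)^p$, and your coordinate computation — using $\mathscr{C}(t^{np-1}\,dt)=t^{n-1}\,dt$, the $\tfrac{1}{p}$-linearity of $\mathscr{C}$, and additivity of $a\mapsto a^p$ — verifies it, with both sides equal to $\sum_j b_j^p\,c_{-1-jp}$. The paper gives no proof of its own, simply citing \cite[Proposition 9]{Serre}, and your argument is essentially a reconstruction of Serre's original residue-theoretic proof, so the approaches coincide.
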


\begin{definition}
Let $\xi_1, \dots \xi_g$ be a $k$-basis of $ \mathrm{H}^1(C,\mathcal{O}_C)$.
 Write $F^*(\xi_j)=\sum_{i=1}^g a_{ij}\xi_i$ with $a_{ij}\in k$. The Hasse-Witt matrix of $C$ with respect to the basis $\xi_1, \dots \xi_g$ is the matrix $(a_{ij})_{i,j}$.
 \end{definition}
 
\begin{remark}
 The Hasse-Witt matrix depends on the choice of basis. Let $\xi_1',\dots,\xi_g'$ be another $k$-basis for 
$\mathrm{H}^1(C,\mathcal{O}_C)$ and let $S=(s_{ij})$ be the change of basis matrix so that $\xi_j'=\sum_{i=1}^g{s_{ij} \xi_i}$. Then the Hasse-Witt matrix with respect to the basis $\xi_1',\dots,\xi_g'$ is $S^{-1}(a_{ij})S^{(p)}$, where $S^{(p)}$ denotes the matrix obtained from $S$ by taking the $p$th power of each of its entries.
 \end{remark}
 
\begin{remark}\label{rmk:dual} If the basis $\xi_1, \dots \xi_g$ of $\mathrm{H}^1(C,\mathcal{O}_C)$ is the dual basis 
for the basis $\omega_1, \dots, \omega_g$ of $\mathrm{H}^0(C,\Omega^1_C)$, then the Hasse-Witt matrix is the transpose of the Cartier-Manin matrix. 
\end{remark}

\subsection{The $p$-rank}\label{prankHW}

If $A$ is an abelian variety of dimension $g$ over $k$, its $p$-rank is the number $f_A$ such that $\#A[p](k) = p^{f_A}$.
If $C$ is a curve of genus $g$ over $k$, its $p$-rank is the $p$-rank of ${\rm Jac}(C)$.
We write $f_A$ (resp.\ $f_C$) for the $p$-rank of $A$ (resp.\ $C$).

Here is another definition of the $p$-rank.
The $k$-vector space $\mathrm{H}^1(C,\mathcal{O}_C)$ has a direct sum decomposition into $F^*$-stable subspaces as
\[\mathrm{H}^1(C,\mathcal{O}_C)=\mathrm{H}^1(C,\mathcal{O}_C)_{s}\oplus \mathrm{H}^1(C,\mathcal{O}_C)_n\]
where $F^*$ is bijective on $\mathrm{H}^1(C,\mathcal{O}_C)_s$ and nilpotent on $\mathrm{H}^1(C,\mathcal{O}_C)_n$. The dimension of $\mathrm{H}^1(C,\mathcal{O}_C)_s$ is equal to the rank of the 
composition of $F^*$ with itself $g$ times, and this rank is called the \emph{stable} rank of Frobenius on $\mathrm{H}^1(C,\mathcal{O}_C)$.

\begin{proposition}\label{prop:stable}
The $p$-rank of $C$ is equal to the stable rank of the Frobenius endomorphism $F^*:\mathrm{H}^1(C,\mathcal{O}_C)\to \mathrm{H}^1(C,\mathcal{O}_C)$.
\end{proposition}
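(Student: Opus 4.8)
The plan is to connect the Frobenius action on $\mathrm{H}^1(C,\mathcal{O}_C)$ to the \'etale cohomology group that counts unramified $\mathbb{Z}/p\mathbb{Z}$-covers, and then to reduce to a statement in semilinear algebra. Recall from the introduction that the $p$-rank $f$ of $C$ equals the maximal $m$ for which there is an unramified $(\mathbb{Z}/p\mathbb{Z})^m$-cover of $C$; equivalently, $f = \dim_{\mathbb{F}_p}\mathrm{H}^1_{\text{\'et}}(C,\mathbb{Z}/p\mathbb{Z})$, since such covers are classified by $\mathrm{Hom}(\pi_1(C),\mathbb{Z}/p\mathbb{Z}) = \mathrm{H}^1_{\text{\'et}}(C,\mathbb{Z}/p\mathbb{Z})$. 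The first step is therefore to compute this group in terms of $F^*$.

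For this I would use the Artin--Schreier sequence of \'etale sheaves on $C$,
\[ 0 \to \mathbb{Z}/p\mathbb{Z} \to \mathbb{G}_a \xrightarrow{\ F-1\ } \mathbb{G}_a \to 0, \]
where $F$ is the $p$-power map. Since $C$ is a smooth projective curve and $\mathbb{G}_a$ is represented by $\mathcal{O}_C$, \'etale and Zariski cohomology of this quasi-coherent sheaf agree, so $\mathrm{H}^i_{\text{\'et}}(C,\mathbb{G}_a) = \mathrm{H}^i(C,\mathcal{O}_C)$ and the map induced on $\mathrm{H}^1$ by $F-1$ is $F^*-1$. Taking the long exact sequence and using that $\mathrm{H}^0(C,\mathcal{O}_C)=k$ with $F-1$ surjective on $k$ (as $k$ is algebraically closed, $X^p-X-a$ always has a root), the connecting map vanishes and one obtains
\[ \mathrm{H}^1_{\text{\'et}}(C,\mathbb{Z}/p\mathbb{Z}) \;\cong\; \ker\!\big(F^*-1 : \mathrm{H}^1(C,\mathcal{O}_C) \to \mathrm{H}^1(C,\mathcal{O}_C)\big). \]
Thus $f = \dim_{\mathbb{F}_p}\ker(F^*-1)$, and it remains to compare this quantity with the stable rank.

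The final step is pure semilinear algebra over $k$. Using the $F^*$-stable decomposition $\mathrm{H}^1(C,\mathcal{O}_C)=\mathrm{H}^1(C,\mathcal{O}_C)_s\oplus \mathrm{H}^1(C,\mathcal{O}_C)_n$ recorded in Subsection \ref{prankHW}, I would first note that $F^*-1$ is injective on the nilpotent part: if $F^*v=v$ and $(F^*)^Nv=0$, then $v=(F^*)^N v=0$. Hence every fixed vector lies in the bijective part $\mathrm{H}^1(C,\mathcal{O}_C)_s$, whose dimension is by definition the stable rank $d$. On $\mathrm{H}^1(C,\mathcal{O}_C)_s$ the operator $F^*$ is a bijective $p$-linear map, and the key lemma is that for such a map over an algebraically closed field the fixed space $\ker(F^*-1)$ is an $\mathbb{F}_p$-vector space of dimension exactly $d$, with $\ker(F^*-1)\otimes_{\mathbb{F}_p}k \xrightarrow{\sim} \mathrm{H}^1(C,\mathcal{O}_C)_s$. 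Combining the two steps gives $f = \dim_{\mathbb{F}_p}\ker(F^*-1) = d$, as required.

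I expect the main obstacle to be this last lemma, and within it the surjectivity of $F^*-1$ on the bijective part (equivalently, that the fixed vectors span over $k$). The inequality $\dim_{\mathbb{F}_p}\ker(F^*-1)\le d$ is easy, since $\mathbb{F}_p$-independent fixed vectors are automatically $k$-independent (take a shortest $k$-linear relation among them, apply $F^*$, and subtract to contradict minimality unless all coefficients lie in $\mathbb{F}_p$). The reverse inequality is the substantive point: it is the additive analogue of Hilbert 90, namely that the separable additive endomorphism $F^*-1$ of the affine space $\mathrm{H}^1(C,\mathcal{O}_C)_s$ is surjective with kernel of order $p^d$. I would either invoke this as a standard fact about $p$-linear bijections over $\overline{\mathbb{F}}_p$ (a form of Lang's theorem) or prove it by induction on $d$, solving $F^*v - v = w$ one Artin--Schreier equation at a time after triangularizing $F^*$.
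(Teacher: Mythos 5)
Your argument is correct, but it is worth noting that the paper does not actually prove this proposition at all: its ``proof'' is the single line ``See \cite{Serre}'', deferring entirely to Serre's classical paper. What you have written is, in effect, the standard proof that lives inside that reference: the Artin--Schreier sequence $0 \to \mathbb{Z}/p\mathbb{Z} \to \mathbb{G}_a \xrightarrow{F-1} \mathbb{G}_a \to 0$, the vanishing of the connecting map because $F-1$ is surjective on $\mathrm{H}^0(C,\mathcal{O}_C)=k$, the resulting identification $\mathrm{H}^1_{\text{\'et}}(C,\mathbb{Z}/p\mathbb{Z}) \cong \ker(F^*-1)$, and then the semilinear fixed-point lemma (due in essence to Dieudonn\'e, and provable via a Lang-type surjectivity argument, exactly as you indicate: in coordinates $F^*$ is $x \mapsto Ax^{(p)}$ with $A$ invertible on the bijective part, so $F^*-1$ has vanishing differential minus identity, hence is \'etale and surjective as a homomorphism of $\mathbb{G}_a^d$, with kernel of order $p^d$). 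You correctly isolate this lemma as the crux, and both of your suggested proofs of it (Lang's theorem, or induction after triangularizing) go through. Two small points deserve attention. First, your opening step uses the equivalence between the $p$-rank and the maximal $m$ admitting an unramified $(\mathbb{Z}/p\mathbb{Z})^m$-cover; this is asserted without proof in the paper's introduction, so you are not being circular relative to the paper, but a fully self-contained treatment starting from $\#\Jac(C)[p](k)=p^f$ would additionally need the standard identification $\dim_{\mathbb{F}_p}\mathrm{H}^1_{\text{\'et}}(C,\mathbb{Z}/p\mathbb{Z}) = f$ (via the \'etale part of $\Jac(C)[p]$ and geometric class field theory). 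Second, in the Fitting decomposition step, note that since both summands are $F^*$-stable, a fixed vector $v=v_s+v_n$ has $F^*v_n=v_n$ directly, and nilpotence then kills $v_n$; your phrasing via injectivity of $F^*-1$ on the nilpotent part amounts to the same thing and is fine. In short: the proposal is a complete and correct proof of a statement the paper merely cites, and it buys the reader a self-contained account at the cost of about a page.
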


\begin{proof}
See \cite{Serre}.
\end{proof}

The $p$-rank of the Jacobian of $C$ can be determined from either
the Cartier-Manin or Hasse-Witt matrix. 
For a matrix $M$, we write $M^{(p^i)}$ for the matrix obtained from $M$ by raising each of its entries to the power $p^i$. 

\begin{proposition}\label{prop:prank}
Let $C$ be a curve of genus $g$ with Hasse-Witt matrix $H$ and Cartier-Manin matrix $M$. Then the $p$-rank of $C$ is
\[f_C=\rk (HH^{(p)}\dots H^{(p^{g-1})})=\rk (M^{(p^{g-1})}\dots M^{(p)}M). \]
\end{proposition}

\begin{proof}
The first equality follows from Proposition~\ref{prop:stable} and the fact that Frobenius is $p$-linear. The second equality is a consequence of Serre duality, see Proposition~\ref{prop:duality}. 
\end{proof}

\begin{remark}
When $g=1$ or $g=2$, then $C$ has $p$-rank $0$ if and only if $C$ is supersingular.  When $g \geq 3$, 
there exist curves of $p$-rank $0$ which are not supersingular.
\end{remark}

\subsection{Prym varieties}

Suppose that $p$ is odd.
If $X$ is a curve of genus $g$ defined over $k$, then ${\rm Jac}(X)$ is a principally polarized abelian variety of 
dimension $g$.
There is a bijection between $2$-torsion points on ${\rm Jac}(X)$ and unramified double covers $\pi:Y \to X$.
Without further comment, we require that $Y$ is connected, which is equivalent to the $2$-torsion point being 
non-trivial.  Also, we note that $Y$ is smooth if $X$ is smooth.

Let $\pi : Y \to X$ be an unramified double cover of $X$. 
By the Riemann-Hurwitz formula, $Y$ has genus $2g-1$.
Also $\rm{Jac}(X)$ is isogenous to a sub-abelian variety of ${\rm Jac}(Y)$.
Let $\sigma$ be the endomorphism of ${\rm Jac}(Y)$ induced by the involution generating ${\rm Gal}(\pi)$, 

The {\it Prym variety} $P_\pi$ is the connected component containing $0$ in the kernel of the map 
$\pi^* : {\rm Jac}(Y)\rightarrow {\rm Jac}(X)$.  It is also the image of the map $1 - \sigma$ in ${\rm Jac}(Y)$. 
In other words, 
\[P_{\pi}={\rm Im}(1-\sigma)={\rm Ker}(1+\sigma)^0.\] 
The canonical principal polarization of ${\rm Jac}(Y)$ induces a principal polarization on $P_\pi$. 
Finally, ${\rm Jac}(Y)$ is isogenous to ${\rm Jac}(X) \oplus P_\pi$.

\subsection{Moduli spaces}

In this paper, we consider:
\begin{align*}
\mathcal{M}_g &  \textrm{ the moduli space of curves of genus }g \textrm{ over } k;\\
\mathcal{A}_g &  \textrm{ the moduli space of principally polarized abelian varieties of dimension }g \textrm{ over } k;\\
\mathcal{R}_g & \textrm{ the moduli space whose points represent unramified double covers $\pi:Y\to X$ over $k$},\\ & \textrm{where $X$ is a curve of genus }g;\\
\mathcal{R}_g^{(f,f')} & \textrm{ the stratum of $\mathcal{R}_g$ representing points where $X$ has $p$-rank $f$ and $P_\pi$ has $p$-rank $f'$.}
\end{align*}

\section{Hasse-Witt matrices of genus $3$ curves and their Prym varieties }\label{sec:forwards}

We continue to work over an algebraically closed field $k$ of characteristic $p >2$.
Suppose $\pi: Y \to X$ is an unramified double cover of a non-hyperelliptic smooth curve of genus $3$.
In \cite{Bruin2008}, Bruin describes the equations for $X$ and $P_\pi$ in terms of quadratic forms.
We describe the Hasse-Witt matrices of $X$ and $P_\pi$ in terms of the quadratic forms, 
using results of St\"{o}hr and Voloch in \cite{StohrVoloch} and Yui in \cite{Yui}.
As an application, we answer Question \ref{Qnatural} affirmatively when $3 \leq p \leq 19$ and $g=3$ in 
Proposition \ref{R3f0_forwardmethod}.

\subsection{The Prym variety of an unramified double cover of a plane quartic}
\renewcommand{\arraystretch}{1.4}

A smooth curve $X$ of genus $3$ which is not hyperelliptic is isomorphic to a smooth plane quartic.

\begin{lemma}  \label{Lbruinsetup} \cite[Bruin]{Bruin2008} 
Suppose $\pi:Y \to X$ is an unramified double cover of a smooth plane quartic curve.
Then there exist quadratic forms $Q_1, Q_2, Q_3 \in k[u, v, w]$
such that $X \subset \mathbb{P}^2$ is given by the equation
\begin{equation} \label{EforX}
X : Q_1(u,v,w)Q_3(u,v,w) = Q_2(u,v,w)^2,
\end{equation}
$Y\subset \mathbb{P}^4$ is given by the equations
\begin{equation} \label{EforY}
Y: Q_1(u,v,w) = r^2, \ Q_2 ( u , v , w ) = rs, \ Q_3(u, v, w) = s^2,
\end{equation}
and  the Prym variety $P_\pi$ is isomorphic to ${\rm Jac}(Z)$ for the smooth genus $2$ curve $Z$ with equation
\begin{equation} \label{EforZ}
Z: z^2=D(x):=  -\det(M_1 + 2xM_2 + x^2M_3),
\end{equation}
where $M_i$ is the symmetric $3 \times 3$ matrix such that 
\[
(u,v,w)M_i (u,v,w)^T = Q_i(u,v,w).
\]

Conversely, if $Q_1,Q_2, Q_3 \in k[u,v,w]$ are quadratic forms such that \eqref{EforX} defines
a smooth plane quartic $X$, 
then the equations above give an unramified double cover $\pi:Y \to X$ and a smooth genus $2$ curve $Z$
such that $P_\pi \simeq {\rm Jac}(Z)$.
\end{lemma}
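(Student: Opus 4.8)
The plan is to establish the correspondence in Lemma~\ref{Lbruinsetup} by exploiting the structure of an unramified double cover via its associated $2$-torsion line bundle and the classical theory of quadratic forms on the canonical curve. Since $X$ is a non-hyperelliptic genus $3$ curve, the canonical embedding realizes $X$ as a smooth plane quartic in $\mathbb{P}^2 = \mathbb{P}(\mathrm{H}^0(X,\Omega^1_X)^\vee)$, and an unramified double cover $\pi$ corresponds to a nontrivial $2$-torsion point $\eta \in \mathrm{Jac}(X)[2]$, i.e.\ a line bundle with $\eta^{\otimes 2} \cong \mathcal{O}_X$. First I would use the fact that such a cover is cut out by the Prym-canonical construction: the relevant data is a net of quadrics, or more precisely a suitable decomposition of the quartic equation. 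I would exhibit quadratic forms $Q_1, Q_2, Q_3 \in k[u,v,w]$ realizing the quartic $X$ in the determinantal form \eqref{EforX}, so that $X$ is the locus where the symmetric $2\times 2$ matrix $\left(\begin{smallmatrix} Q_1 & Q_2 \\ Q_2 & Q_3 \end{smallmatrix}\right)$ is degenerate; the existence of such a representation is where Bruin's analysis of the $2$-torsion structure enters.

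Next I would construct $Y \subset \mathbb{P}^4$ explicitly. The equations \eqref{EforY} present $Y$ as the preimage under the double cover determined by adjoining a square root: writing $Q_1 = r^2$, $Q_2 = rs$, $Q_3 = s^2$ forces $Q_1 Q_3 = (rs)^2 = Q_2^2$, recovering \eqref{EforX}, so the projection $(u:v:w:r:s) \mapsto (u:v:w)$ maps $Y$ to $X$. To see this is an unramified degree-$2$ cover I would check that over each point of $X$ the fiber consists of the two choices of sign for $(r:s)$ and that these never collide—this uses the smoothness of $X$ together with the fact that the three quadrics do not simultaneously vanish on $X$. I would verify the genus of $Y$ equals $5$ by Riemann–Hurwitz (already noted in the text) as a consistency check, confirming $\dim P_\pi = 2$.

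The identification $P_\pi \cong \mathrm{Jac}(Z)$ with $Z$ given by \eqref{EforZ} is the heart of the statement and the main obstacle. Here I would invoke the classical fact that the pencil of conics $Q(x) := M_1 + 2x M_2 + x^2 M_3$ parametrized by $x$ has discriminant $\det Q(x)$, a degree-$6$ polynomial in $x$, and that the genus $2$ curve $Z : z^2 = -\det(M_1 + 2xM_2 + x^2 M_3)$ records the degeneration locus of this pencil. The Prym variety of $\pi$ is then identified with $\mathrm{Jac}(Z)$ through the correspondence between the double cover $Y \to X$ and the conic bundle / quadratic form data; concretely this follows from the theory relating Pryms of bielliptic or quadric-bundle constructions to Jacobians of the associated discriminant curves. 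I expect the delicate points to be: (i) confirming $D(x)$ is separable of degree $6$ so that $Z$ is genuinely a smooth genus $2$ curve, which requires the smoothness of $X$ to rule out bad degenerations of the pencil; and (ii) producing the actual isogeny (in fact isomorphism of principally polarized abelian varieties) $P_\pi \cong \mathrm{Jac}(Z)$ rather than merely matching dimensions, which is where I would lean most heavily on Bruin's results.

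For the converse direction I would run the construction in reverse: starting from quadratic forms $Q_1, Q_2, Q_3$ with \eqref{EforX} defining a smooth plane quartic, the same equations \eqref{EforY} define $Y$, and I would check that the resulting $\pi: Y \to X$ is connected and unramified—connectedness corresponding to the nontriviality of the associated $2$-torsion class, which I would argue follows from the quartic not splitting as a product compatible with a trivial cover. Then \eqref{EforZ} produces $Z$, and smoothness of $X$ again guarantees $D(x)$ is separable so $Z$ has genus $2$, with $P_\pi \cong \mathrm{Jac}(Z)$ by the forward analysis. Throughout, the substantive content is imported from \cite{Bruin2008}; my role in the proof plan is to assemble the determinantal representation, verify the cover is unramified and connected, and trace the identification of the Prym with $\mathrm{Jac}(Z)$ via the discriminant of the conic pencil.
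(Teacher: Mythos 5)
Your proposal takes essentially the same route as the paper: the paper's entire proof of this lemma is a citation to \cite[Theorem 5.1(4)]{Bruin2008} for the correspondence and to \cite[Section 5, Case 4]{Bruin2008} for the smoothness of $Z$, and you likewise defer the substantive content (the determinantal representation and the identification $P_\pi \simeq \mathrm{Jac}(Z)$ via the discriminant $-\det(M_1+2xM_2+x^2M_3)$ of the conic pencil) to Bruin. The elementary scaffolding you add---recovering \eqref{EforX} from \eqref{EforY}, deducing unramifiedness from the fact that $Q_1,Q_2,Q_3$ cannot vanish simultaneously at a smooth point of $X$, and flagging separability of $D(x)$ and the principally polarized isomorphism as the points to import from \cite{Bruin2008}---is correct and consistent with the paper's treatment.
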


\begin{proof}
This is proven in \cite[Theorem 5.1(4)]{Bruin2008}.
The fact that $Z$ is smooth when $X$ is smooth can be found in \cite[Section 5, Case 4]{Bruin2008}.
\end{proof}

\subsection{Hasse-Witt matrices}

\begin{lemma} \label{forwardHW}
Let $\pi:Y \to X$ be an unramified double cover of a smooth plane quartic curve and suppose 
$P_\pi={\rm Jac}(Z)$.
Let $Q_1,Q_2,Q_3 \in k[u,v,w]$ be quadratic forms as in Lemma \ref{Lbruinsetup}, 
 and
let $D(x) \in k[x]$ be defined as in Lemma \ref{Lbruinsetup}\eqref{EforZ}.
\begin{enumerate}
\item Let \[q(u,v) = Q_2(u,v,1)^2-Q_1(u,v,1)Q_3(u,v,1).\]
Let $a_{i,j}$ be the values in $k$ such that $q(u,v)^{p-1}=\sum_{i,j}{a_{i,j}u^iv^j}$.
Then the Hasse-Witt matrix of $X$ is 
\[H_X=
\begin{pmatrix}a_{p-1,p-1}&a_{2p-1,p-1}& a_{p-1,2p-1}\\
a_{p-2,p-1}&a_{2p-2,p-1}& a_{p-2,2p-1}\\
a_{p-1,p-2}&a_{2p-1,p-2}& a_{p-1,2p-2}
\end{pmatrix}.
\]

\item Let $b_i \in k$ be the values in $k$ such that $D(x)^{(p-1)/2} = \sum_i b_i x^i$.
Then the Hasse-Witt matrix of $Z$ is 
\[H_Z=
\begin{pmatrix}
b_{p-1} & b_{2p-1}\\
b_{p-2} & b_{2p-2}
\end{pmatrix}.
\]
\end{enumerate}
\end{lemma}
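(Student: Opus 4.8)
The plan is to compute each Hasse-Witt matrix by identifying an explicit basis of $\mathrm{H}^0(\Omega^1)$ (equivalently, its dual) and applying a Yui-type formula for the Cartier operator. Part (2) follows almost immediately from the machinery already assembled: $Z:z^2=D(x)$ is a genus $2$ hyperelliptic curve, so Proposition \ref{prop:yui} computes its Cartier-Manin matrix as $(c_{ip-j})_{1\le i,j\le 2}$ where $c_s$ is the coefficient of $x^s$ in $D(x)^{(p-1)/2}=\sum_i b_i x^i$, i.e. $c_s=b_s$. Thus the Cartier-Manin matrix is $\begin{pmatrix} b_{p-1} & b_{p-2} \\ b_{2p-1} & b_{2p-2}\end{pmatrix}$, and by Remark \ref{rmk:dual} the Hasse-Witt matrix (with respect to the dual basis) is the transpose, which is exactly $H_Z$. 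The only care needed is to confirm the indexing convention $c_s=b_s$ and that the basis $\omega_i=x^{i-1}\,dx/z$ used in Proposition \ref{prop:yui} is the one implicitly dualized.

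For part (1), the idea is to realize the smooth plane quartic $X$ via equation \eqref{EforX} and use the formula of St\"ohr and Voloch (cited in the section preamble) for the Cartier operator on a smooth plane curve. First I would dehomogenize by setting $w=1$, writing the affine quartic as $F(u,v)=Q_1(u,v,1)Q_3(u,v,1)-Q_2(u,v,1)^2=-q(u,v)=0$. For a smooth plane curve of degree $d$, a basis of $\mathrm{H}^0(X,\Omega^1_X)$ is given by the differentials $u^i v^j\,\omega_0$ with $i+j\le d-3$, where $\omega_0$ is the canonical differential $du/F_v=-dv/F_u$; for $d=4$ these are $\omega_0,\ u\,\omega_0,\ v\,\omega_0$, a $3$-dimensional space as expected for genus $3$. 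The St\"ohr-Voloch description expresses $\mathscr{C}$ applied to such differentials in terms of the coefficients of a suitable power of $F$: concretely, the relevant coefficients of $F^{p-1}=(-q)^{p-1}=q^{p-1}$ (using that $p-1$ is even) indexed by the exponents $ip-1,jp-1$ and their shifts. The matrix $H_X$ displayed is assembled from exactly these coefficients $a_{i,j}$ of $q^{p-1}$, with the three columns corresponding to the basis elements $\omega_0,u\,\omega_0,v\,\omega_0$ and the three rows to the dual extraction of the $(p-1,p-1)$, $(p-2,p-1)$, $(p-1,p-2)$ monomials.

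The key technical step, and the main obstacle, is to match the St\"ohr-Voloch Cartier-operator formula precisely to the index pattern in $H_X$: one must track which coefficient of $q^{p-1}$ computes the action of $\mathscr{C}$ on each basis differential, and in particular verify that the shifts by $p$ (giving indices $2p-1,\ 2p-2$) arise correctly from multiplying by $u$ or $v$ before applying the $\tfrac1p$-linear operator. I would set this up by recalling that for a plane curve $F=0$ of degree $d$, St\"ohr-Voloch give $\mathscr{C}(h\,\omega_0)$ in terms of the coefficient extraction of $h\cdot F^{p-1}$ at exponents congruent to $-1 \bmod p$ in each variable; then the coefficient of $u^{ap-1}v^{bp-1}$ in $u^i v^j q^{p-1}$ is $a_{ap-1-i,\,bp-1-j}$, and feeding in $(i,j)\in\{(0,0),(1,0),(0,1)\}$ together with the three target exponents $(a,b)\in\{(1,1),(2,1),(1,2)\}$ produces precisely the nine entries displayed. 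After fixing the sign issue ($F=-q$ but $(p-1)$ even, so $F^{p-1}=q^{p-1}$) and confirming the ordering of rows versus columns against the chosen basis and its dual, the remaining verification is bookkeeping. Finally I would note that, having written both Hasse-Witt matrices, one may substitute into Proposition \ref{prop:prank} to read off the $p$-ranks of $X$ and $P_\pi$, which is the application motivating the lemma.
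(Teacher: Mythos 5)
Your proposal is correct and follows essentially the same route as the paper: part (1) is the St\"ohr--Voloch computation with the basis $\frac{du}{q_v}, u\frac{du}{q_v}, v\frac{du}{q_v}$ and the extraction of the coefficients $a_{ap-1-i,\,bp-1-j}$ of $q^{p-1}$ (the paper phrases this via the operator $\nabla=\frac{\partial^{2p-2}}{\partial u^{p-1}\partial v^{p-1}}$ and adds the degree bound $\deg \nabla(q^{p-1}h)\leq 2(p-1)+1$ to rule out extra monomials, which your restriction to the targets $(a,b)\in\{(1,1),(2,1),(1,2)\}$ handles implicitly), while part (2) is exactly the paper's alternative justification (Yui's Cartier--Manin matrix transposed via Remark \ref{rmk:dual}; the paper's primary citation is instead \cite[Lemma 5.1]{Bouw}). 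One small caution: your early sentence assigning the \emph{columns} to the basis elements $\omega_0,u\omega_0,v\omega_0$ is the transpose of your later, correct recipe (rows $\leftrightarrow$ the multiplier $u^iv^j$, columns $\leftrightarrow$ the target exponents), and it is the latter that matches the displayed $H_X$ and the paper's convention.
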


\begin{remark}
In Lemma \ref{forwardHW}(1), the 
Hasse-Witt matrix is taken with respect to the basis of $\mathrm{H}^1(X, {\mathcal O_X})$ given by the dual of the basis 
$\frac{du}{q_v}$, $u\frac{du}{q_v}$, $v\frac{du}{q_v}$ of $\mathrm{H}^0(X, \Omega^1_X)$.
In Lemma \ref{forwardHW}(2), the 
Hasse-Witt matrix is taken with respect to the basis of $\mathrm{H}^1(Z, {\mathcal O_Z})$ given by the dual of the basis 
$\frac{dx}{z}, x\frac{dx}{z}$ of $\mathrm{H}^0(Z, \Omega^1_Z)$.
\end{remark}

\begin{proof}
\begin{enumerate}
\item
Let $\omega_1,\dots, \omega_g$ be a basis for $\mathrm{H}^0(X,\Omega_X)$ 
and suppose that the action of the Cartier operator is given by
\begin{equation}\label{Cartierforcurve}
\mathscr{C}(\omega_i)=\sum_{j=1}^g{c_{ij}\omega_j}.
\end{equation}
By Definition \ref{Dcartiermanin} and Remark~\ref{rmk:dual}, 
the Hasse-Witt matrix with respect to the dual basis is the matrix $(c_{ij}^p)$.

The result \cite[Theorem 1.1]{StohrVoloch} of St\"{o}hr and Voloch yields the following information
in \eqref{SVCartier} and \eqref{nabla} about 
the action of the Cartier operator on the smooth plane curve $X$, with affine equation $q(u,v)=0$.
Consider the partial derivative operator \mbox{$\nabla=\frac{\partial^{2p-2}}{\partial u^{p-1}\partial v^{p-1}}$}.
Then for any $h\in k(u,v)$, 
\begin{equation}\label{SVCartier}
\mathscr{C}\left(h\frac{du}{q_v}\right)=\left(\nabla(q^{p-1}h)\right)^{\frac{1}{p}}\frac{du}{q_v}.
\end{equation}
Also, if $\alpha_{i,j} \in k$, then
\begin{equation}\label{nabla}
\nabla\left(\sum_{i,j}{\alpha_{i,j}u^iv^j}\right)=\sum_{i,j}{\alpha_{ip+p-1,jp+p-1}u^{ip}v^{jp}}.
\end{equation}
Write $\omega_i=h_i(u,v)\frac{du}{q_v}$.
By \eqref{SVCartier} and \eqref{Cartierforcurve}, 
\begin{equation}\label{nablaCartier}
\nabla(q^{p-1}h_i)=\sum_{j}{c_{ij}^ph_j^p}.
\end{equation}

In this case, a basis for $H^0(X, \Omega^1_X)$ is 
$\omega_1=\frac{du}{q_v}, \omega_2=u\frac{du}{q_v}, \omega_3=v\frac{du}{q_v}$.
By definition, $q(u,v)^{p-1}=\sum_{i,j}{a_{i,j}u^iv^j}$. By \eqref{nablaCartier} and \eqref{nabla} we have
\[\nabla(q^{p-1})=\sum_{i,j}{a_{ip+p-1,jp+p-1}u^{ip}v^{jp}}=c_{11}^p+c_{12}^pu^p+c_{13}^pv^p\]
where $c_{11},c_{12},c_{13}$ are the entries in the first row of the Hasse-Witt matrix. 
Note that $\deg(f) = 4$, so $\deg(q^{p-1}) = 4(p-1)$ and hence $\deg(\nabla(q^{p-1}))\leq 2(p-1)$. 
Therefore, the coefficient of $u^{ip}v^{jp}$ in $\nabla(q^{p-1})$ is zero unless $i+j\leq 1$. 
Equating the nonzero coefficients gives $c_{11}=a_{p-1,p-1}, c_{12}=a_{2p-1,p-1}$ and $c_{13}=a_{p-1,2p-1}$. 

Similarly, for the other two rows in the Hasse-Witt matrix,
\[
\nabla(q^{p-1}u)=\sum_{i,j}{a_{ip+p-1,jp+p-1}u^{ip+1}v^{jp}}=c_{21}^p+c_{22}^pu^p+c_{23}^pv^p,\]
and
\[\nabla(q^{p-1}v)=\sum_{i,j}{a_{ip+p-1,jp+p-1}u^{ip}v^{jp+1}}=c_{31}^p+c_{32}^pu^p+c_{33}^pv^p.\]

\item Note that $Z$ is smooth since $X$ is smooth by \cite[Section 5, Case 4]{Bruin2008}.
The result follows from \cite[Lemma 5.1]{Bouw}.
Alternatively, the matrix $H_Z$ is the transpose of the Cartier-Manin matrix for $Z$ from 
\cite[Proposition~\ref{prop:yui}]{Yui}.
\end{enumerate}
\end{proof}

\subsection{The $p$-ranks of $X$ and $Z$}\label{tables}

By Proposition~\ref{prop:prank}, 
the $p$-rank $f=f_X$ of $X$ is the rank of $H_XH_X^{(p)}H_X^{(p^2)}$ and the $p$-rank $f'=f_Z$ of $Z$ is the rank of $H_ZH_Z^{(p)}$. 

\begin{proposition}\label{R3f0_forwardmethod}
Let $3 \leq p \leq 19$.
For each pair $(f,f')$ such that $0 \leq f \leq 3$ and $0 \leq f' \leq 2$, there exists an unramified double cover
$\pi:Y \to X$ such that $X$ is a smooth curve of genus $3$ and $p$-rank $f$ and $P_\pi$ has $p$-rank $f'$;
in other words, $\mathcal R_3^{(f,f')}$  is non empty when $3 \leq p \leq 19$.
\end{proposition}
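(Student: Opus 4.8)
The plan is to reduce the statement to a finite, explicit search and then to verify each case directly from the formulas already established. By the converse direction of Lemma~\ref{Lbruinsetup}, any triple of quadratic forms $Q_1,Q_2,Q_3\in\bbF_p[u,v,w]$ for which \eqref{EforX} cuts out a smooth plane quartic $X$ automatically produces an unramified double cover $\pi:Y\to X$ together with a smooth genus $2$ curve $Z$ satisfying $P_\pi\simeq\Jac(Z)$. Hence it suffices, for each prime $p$ with $3\leq p\leq 19$ and each of the twelve pairs $(f,f')$ with $0\leq f\leq 3$ and $0\leq f'\leq 2$, to exhibit one such triple whose associated pair of $p$-ranks is exactly $(f,f')$. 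The only structural constraint on the forms is the smoothness of $X$; everything else is a verification.

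For a candidate triple I would first form $q(u,v)=Q_2(u,v,1)^2-Q_1(u,v,1)Q_3(u,v,1)$ and the sextic $D(x)=-\det(M_1+2xM_2+x^2M_3)$ of \eqref{EforZ}, and check that $X:q=0$ is a smooth plane quartic (which by Lemma~\ref{Lbruinsetup} also guarantees that $Z:z^2=D(x)$ is smooth). I would then read off the Hasse-Witt matrices $H_X$ and $H_Z$ from the coefficient expansions of $q^{p-1}$ and $D^{(p-1)/2}$ precisely as in Lemma~\ref{forwardHW}, and compute the two $p$-ranks as $f=\rk\bigl(H_X H_X^{(p)}H_X^{(p^2)}\bigr)$ and $f'=\rk\bigl(H_Z H_Z^{(p)}\bigr)$ using Proposition~\ref{prop:prank}. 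These are finite linear-algebra computations over $\bbF_p$, so each candidate can be tested mechanically; since ordinarity ($f=3$, resp.\ $f'=2$) is the generic behavior while low $p$-rank is a closed condition, I would seed the search near forms already known to degenerate one of the two ranks rather than sampling blindly, and record the witnessing triples in a table.

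The hard part will be that the forms $Q_1,Q_2,Q_3$ govern $X$ and $Z$ simultaneously: the sextic $D(x)$ is built from the very same symmetric matrices $M_i$ as the quartic $q$, so $f$ and $f'$ cannot be tuned independently. The delicate cases are therefore those in which one rank must be large while the other is forced small, most notably $(f,f')=(3,0)$ — where $X$ is ordinary but $Z=P_\pi$ is supersingular — and the doubly degenerate case $(0,0)$. For these, a random search over $\bbF_p$-forms may not land inside the intersection of the relevant strata within a practical range, so I would either enlarge the coefficient field to $\bbF_{p^2}$ or work in a one- or two-parameter family of forms and solve the resulting polynomial conditions directly: the vanishing of $\det H_Z$ (and of the full product $H_ZH_Z^{(p)}$ when $f'=0$), together with the prescribed rank drops in $H_X H_X^{(p)}H_X^{(p^2)}$. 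Once a witnessing triple is produced for each pair and each prime, the verification via Lemma~\ref{forwardHW} and Proposition~\ref{prop:prank} is immediate, and because the set of cases is finite the tabulated examples complete the proof.
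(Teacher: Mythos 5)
Your proposal is correct and takes essentially the same approach as the paper: reduce the statement to exhibiting, for each $p$ and $(f,f')$, an explicit triple of quadratic forms via Lemma~\ref{Lbruinsetup}, read off $H_X$ and $H_Z$ from Lemma~\ref{forwardHW}, and certify the ranks via Proposition~\ref{prop:prank}, which is precisely the computational search whose witnesses the paper records in its tables. The only difference is one of economy, not method: the paper invokes \cite[Proposition 6.4]{OzmanPries2016} to dispose of the cases $f'\in\{1,2\}$ (except $(f,f')=(0,1),(1,1)$ when $p=3$) and searches only the $f'=0$ cases, whereas you would search all twelve pairs directly.
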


\begin{proof}
The result holds (without any restriction on $p$) when $f'=2$ or $f'=1$ by \cite[Proposition 6.4]{OzmanPries2016}, 
as long as $(f,f') \not = (0,1), (1,1)$ when $p=3$.
To complete the proof, we provide an example below in each case when $f'=0$ 
(and when $p=3$ and $(f,f') = (0,1), (1,1)$).
These examples were found with a computational search, using Lemma \ref{forwardHW}.
\end{proof}

In the examples below, we give the equations of the curves $X,Z$ along with  the coefficients of the quadratic forms that lead to these curves in the following format 
\[[q_{111}, q_{112}, q_{122}, q_{113}, q_{123}, q_{133},q_{211},q_{222}, q_{233}, q_{311}, q_{312}, q_{322}, q_{313}, q_{323}, q_{333}],\]
where: 
\begin{itemize}
	\item $Q_1= q_{111}u^2+ q_{112}uv+ q_{122}v^2+ q_{113}uw+ q_{123}vw+ q_{133}w^2$;
	\item $Q_2= q_{211}u^2+ q_{222}v^2+ q_{233}w^2$;
	\item $Q_3= q_{311}u^2+ q_{312}uv+ q_{322}v^2+ q_{313}uw+ q_{323}vw+ q_{333}w^2$.
\end{itemize}

\begin{example} \label{Equadp3}
$p=3$

\small
\begin{tabular}{|l|l|}
\hline $(f,f')$ & $X, Z,[q_{ijk}]$ \\ \hline \hline
\multirow{2}{*}{$(3,0)$}
& $X: 2u^4 + 2u^3v + u^3 + 2u^2v^2 + u^2v + 2u^2 + 2uv^3 + uv^2 + uv + 2u + v^3 + v^2 + 2v + 1 $  \\ \cline{2-2}
& $ Z: 2x^5 + x^4 + 2x^2 + x + z^2 + 1$  \\ \cline{2-2}
&  $[q_{ijk}]=[ 2, 0, 2, 0, 0, 1, 1, 1, 1, 0, 1, 2, 2, 2, 2] $ \\ \hline
\multirow{2}{*}{$(2,0)$}
& $X: 2u^4 + u^3v + 2u^3 + u^2v + 2uv^3 + uv^2 + 2v^3 + 2v^2 + 2$ \\ \cline{2-2}
&  $Z: x^6 + 2x^5 + 2x^4 + x^2 + x + z^2$  \\  \cline{2-2}
& $[q_{ijk}]=[ 1, 0, 2, 0, 0, 0, 1, 1, 1, 0, 1, 2, 2, 1, 2]$ \\ \hline
\multirow{2}{*}{$(1,0)$} 
& $X: 2u^4 + 2u^3 + 2u^2v + 2u^2 + uv^2 + 2uv + x + 2v^4 + v^3 + v + 2$ \\ \cline{2-2}
& $Z: 2x^6 + 2x^5 + z^2 + 1$\\  \cline{2-2}
& $[q_{ijk}]=[ 2, 0, 0, 1, 1, 1, 1, 1, 1, 0, 0, 1, 1, 1, 0 ] $ \\ \hline 
\multirow{2}{*}{$(0,0)$}
& $X: 2u^4 + 2u^3 + 2u^2v + 2u^2 + 2uv^2 + u + v^4 + 2v^3 + v + 1$  \\ \cline{2-2}
& $Z: 2x^6 + x + z^2 + 1$  \\ \cline{2-2}
& $[q_{ijk}]= [ 2, 0, 2, 0, 0, 1, 1, 1, 1, 0, 0, 1, 1, 1, 2] $ \\ \hline
\multirow{2}{*}{$(1,1)$} 
& $Z: 2x^6 + 2x^4 + x + y^2$ \\ \cline{2-2}
& $X: 2x^4 + x^2y^2 + x^2 + xy^3 + xy^2 + 2xy + 2x + 2y^4 + y^3 + 2y $ \\ \cline{2-2}
& $[q_{ijk}]= [ 0, 0, 1, 0, 0, 2, 1, 1, 1, 0, 1, 0, 1, 1, 2]$ \\ \hline
\multirow{2}{*}{$(0,1)$}
& $Z: 2x^6 + 2x^3 + 2x^2 + x + y^2 + 1 $ \\ \cline{2-2}
& $X: 2x^4 + 2x^3y + 2x^3 + 2x^2 + xy^3 + xy^2 + 2xy + 2x + y^2 + 2$ \\ \cline{2-2}
& $[q_{ijk}]= [ 2, 0, 1, 0, 0, 2, 1, 0, 0, 0, 1, 0, 1, 0, 1]$ \\ \hline
\end{tabular}
\end{example}

\begin{example} $p=5$

\small
\begin{tabular}{|l|l|}
\hline $(f,f')$ & $X, Z,[q_{ijk}]$ \\ \hline \hline
\multirow{2}{*}{$(3,0)$}
& $ X:  4u^4 + 3u^3 + 4u^2v^2 + u^2v + 3uv^2 + 4u + v^3 + 3v^2 + 3v$  \\ \cline{2-2}
&  $ Z: 4x^6 + x^3 + 2x + z^2 + 3 $ \\ \cline{2-2}
& $q_{ijk}=[ 1, 0, 1, 0, 0, 3, 1, 1, 0, 0, 0, 1, 3, 1, 0]$ \\  \hline
\multirow{2}{*}{$(2,0)$} 
& $ X:4u^4 + 3u^3 + 4u^2v^2 + u^2v + 3uv^2 + u + v^3 + 2v^2 + 2v $ \\ \cline{2-2}
&  $Z:4x^6 + 4x^3 + 3x + z^2 + 2$  \\  \cline{2-2}
& $[q_{ijk}]=[ 1, 0, 1, 0, 0, 2, 1, 1, 0, 0, 0, 1, 3, 1, 0] $ \\ \hline
\multirow{2}{*}{$(1,0)$}
& $X:4u^4 + 3u^2v^2 + 3u^2v + 2u^2 + 4uv^2 + uv + 2u + 4v^4 + 4v^3 + 4v^2 + 3$  \\ \cline{2-2}
& $Z: 2x^5 + x^3 + 2x^2 + 2x + z^2 + 2$ \\ \cline{2-2} 
& $[q_{ijk}]= [ 3, 4, 4, 4, 4, 3, 1, 1, 1, 0, 0, 0, 0, 1, 3 ]$ \\
\multirow{2}{*}{$(0,0)$}
& $X: 4u^4 + 3u^2v^2 + 3u^2v + 2u^2 + 4uv^2 + 3uv + 3v + 4v^4 + 4v^3 + v + 2 $\\ \cline{2-2}
&  $Z: 2x^5 + 2x^2 + 2x + z^2 + 2$ \\ \cline{2-2} 
&  $[q_{ijk}]=[ 3, 4, 4, 1, 0, 1, 1, 1, 1, 0, 0, 0, 0, 1, 3]$ \\
\hline
\end{tabular}
\end{example}

\begin{example}
$p=7$

\small
\begin{tabular}{|l|l|}
\hline$(f,f')$ & $X, Z,[q_{ijk}]$ \\ \hline \hline
\multirow{2}{*}{$(3,0)$}
& $X:  6u^4 + 5u^2v^2 + 3u^2v + 6u^2 + 6v^4 + v^3 + 3v^2 + v + 4$  \\ \cline{2-2}  
& $Z:6x^5 + 6x^3 + z^2 + 4  $  \\ \cline{2-2}  
& $[q_{ijk}]=[ 1, 0, 5, 0, 0, 5, 1, 1, 1, 0, 0, 0, 0, 3, 1]$ \\ \hline
\multirow{2}{*}{$(2,0)$}
& $X: 6u^4 + 5u^2v^2 + 2u^2v + 2u^2 + 6v^4 + 4v^3 + 6v^2 + 6$ \\ \cline{2-2} 
&  $Z: 5x^5 + x^4 + 4x^3 + 6x^2 + 4x + z^2 $  \\ \cline{2-2}  
& $[q_{ijk}]= [ 1, 0, 2, 0, 0, 0, 1, 1, 1, 0, 0, 0, 0, 2, 4]$ \\ \hline
\multirow{2}{*}{$(1, 0)$}
& $X:6u^4 + 5u^2v^2 + 2u^2v + u^2 + 6v^4 + 5v^2 + 4v + 5$   \\ \cline{2-2} 
& $Z:6x^5 + 6x^4 + x^2 + x + z^2$  \\ \cline{2-2}  
& $[q_{ijk}]=[ 3, 0, 0, 0, 0, 6, 1, 1, 1, 0, 0, 0, 0, 3, 1 ] $ \\ \hline
\multirow{2}{*}{$(0, 0)$}
& $X: 6u^4 + u^2v^2 + 4u^2 + 3v^4 + 6v^2 + 6$  \\ \cline{2-2}  
& $Z: 4x^5 + 4x^4 + 3x^2 + 3x + z^2 $ \\ \cline{2-2}  
& $[q_{ijk}]=[ 3, 0, 4, 0, 0, 0, 1, 1, 1, 0, 0, 1, 0, 0, 2]$ \\ \hline
\end{tabular}
\end{example}

\begin{example}
$p=11$

\small
\begin{tabular}{|l|l|}
\hline $(f,f')$ & $X,Z,[q_{ijk}]$ \\ \hline \hline
\multirow{2}{*}{$(3,0)$} 
& $X: 10u^4 + 9u^2v^2 + 5u^2v + 2u^2 + 10v^4 + 10v^3 + 4v^2 + 4v + 6$ \\ \cline{2-2}
&  $Z: 9x^5 + 4x^4 + x^3 + 7x^2 + 8x + z^2 + 3$  \\ \cline{2-2}
& $[q_{ijk}]= [ 8, 0, 5, 0, 0, 2, 1, 1, 0, 0, 0, 0, 0, 2, 3] $ \\ \hline
\multirow{2}{*}{$(2,0)$}
& $X: 10u^4 + 9u^2v^2 + 9u^2v + 9u^2 + 10v^4 + v^3 + v^2 + 7v + 7$    \\ \cline{2-2} 
& $Z: 9x^5 + 9x^4 + 9x^3 + 2x^2 + 2x + z^2 + 1$ \\ \cline{2-2} 
& $[q_{ijk}]= [ 10, 0, 6, 0, 0, 9, 1, 1, 0, 0, 0, 0, 0, 2, 2] $ \\ \hline
\multirow{2}{*}{$(1,0)$}
& $X: 10u^4 + 9u^2v^2 + 9u^2v + 8u^2 + 10v^4 + 3v^3 + 10v^2 + 4v + 6	$ \\ \cline{2-2} 
&  $Z: 9x^5 + 2x^4 + 3x^3 + 9x^2 + 2x + z^2 + 8$  \\ \cline{2-2}
& $[q_{ijk}]= [ 10, 0, 7, 0, 0, 2, 1, 1, 0, 0, 0, 0, 0, 2, 3] $ \\ \hline
\multirow{2}{*}{$(0,0)$}
&  $X: 10u^4 + 9u^2v^2 + 3u^2v + 5u^2 + 10v^4 + 9v^3 + 8v^2 + 4v + 1$ \\ \cline{2-2} 
& $Z: 9x^5 + 8x^4 + 9x^3 + 3x^2 + 10x + z^2 + 8$  \\ \cline{2-2} 
& $[q_{ijk}]= [ 7, 0, 10, 0, 0, 2, 1, 1, 1, 0, 0, 0, 0, 2, 1] $ \\ \hline
\end{tabular}
\end{example}

\begin{example}
$p=13$

\small
\begin{tabular}{|l|l|}
\hline $(f,f')$ & $X,Z,[q_{ijk}]$ \\ \hline \hline

\multirow{2}{*}{$(3,0)$} 
& $X: 12u^4 + 11u^2v^2 + 6u^2v + 11u^2 + 12v^4 + 12v^3 + 11v^2 + 3v + 12$ \\ \cline{2-2}
& $Z: 11x^5 + 10x^4 + 8x^3 + 3x^2 + 11x + z^2 + 1 $ \\ \cline{2-2}
& $[q_{ijk}]= [ 3, 0, 6, 0, 0, 8, 1, 1, 1, 0, 0, 0, 0, 2, 0 ] $ \\ \hline

\multirow{2}{*}{$(2,0)$}
& $X:12u^4 + 11u^2v^2 + 2u^2v + 4u^2 + 12v^4 + 11v^3 + 5v^2 + 11v + 6$ \\ \cline{2-2} 
& $Z: 11x^5 + 10x^4 + 8x^3 + 3x^2 + 11x + z^2 + 1 $ \\ \cline{2-2} 
& $[q_{ijk}]= [ 1, 0, 12, 0, 0, 12, 1, 1, 1, 0, 0, 0, 0, 2, 6 ]$ \\ \hline 

\multirow{2}{*}{$(1,0)$}
& $X: 12u^4 + 11u^2v^2 + 9u^2v + 9u^2 + 12v^4 + 7v^3 + 8v^2 + 4v + 1 $ \\ \cline{2-2} 
& $Z: 11x^5 + 7x^4 + 11x^3 + 6x^2 + 5x + z^2 + 1$ \\ \cline{2-2} 
& $[q_{ijk}]= [ 2, 0, 3, 0, 0, 11, 1, 1, 1, 0, 0, 0, 0, 11, 12 ]$ \\ \hline

\multirow{2}{*}{$(0,0)$}
& $X: 12u^4 + 11 u^2 v^2 + 9 u^2v + 7 u^2 + 12 v^4 + 8 v^3 + 6 v^2 + 12 v + 11 $ \\ \cline{2-2} 
& $Z: 6 x^5 + 5 x^4 + 3 x^3 + 6 x^2 + 6 x + z^2 + 6$ \\ \cline{2-2} 
& $[q_{ijk}]=[ 9, 0, 8, 0, 0, 12, 1, 1, 1, 0, 0, 0, 0, 1, 1]$ \\ \hline
\end{tabular}
\end{example}

\begin{example}
$p=17$

\small
\begin{tabular}{|l|l|}
\hline $(f,f')$ & $X,Z,[q_{ijk}]$ \\ \hline \hline

\multirow{2}{*}{$(3,0)$} 
& $X: 16u^4 + 15u^2v^2 + 15u^2 + 16v^4 + 5v^3 + 7v^2 + 6v + 3$  \\ \cline{2-2}
&  $Z: 4x^5 + 8x^4 + 9x^3 + 9x^2 + x + z^2 $ \\ \cline{2-2}
& $[q_{ijk}]= [ 0, 0, 13, 0, 0, 2, 1, 1, 1, 0, 0, 0, 0, 3, 2 ]$ \\ \hline

\multirow{2}{*}{$(2,0)$}
&$X: 16u^4 + 15u^2v^2 + 10u^2v + u^2 + 16v^4 + 3v^3 + 4v^2 + 16 $   \\ \cline{2-2} 
& $Z: 4x^5 + 8x^4 + 9x^3 + 9x^2 + x + z^2 $  \\ \cline{2-2} 
& $[q_{ijk}]= [ 9, 0, 1, 0, 0, 0, 1, 1, 1, 0, 0, 0, 0, 3, 6 ] $ \\ \hline

\multirow{2}{*}{$(1,0)$}
& $X: 	16u^4 + 15u^2v^2 + 10u^2v + 3u^2 + 16v^4 + 4v^3 + 14v + 6$ \\ \cline{2-2} 
&  $Z: 4x^5 + 7x^4 + 5x^3 + 10x^2 + 9x + z^2 + 5$  \\  \cline{2-2} 
& $[q_{ijk}]=[ 9, 0, 7, 0, 0, 16, 1, 1, 1, 0, 0, 0, 0, 3, 10 ]$ \\ \hline

\multirow{2}{*}{$(0,0)$}
& $X:16u^4 + 15u^2v^2 + 6u^2v + 15u^2 + 16v^4 + 9v^3 + 15v^2 + 15v + 16 $ \\ \cline{2-2} 
& $Z: 8x^5 + 7x^4 + 8x^3 + x^2 + 14x + z^2 + 11 $ \\  \cline{2-2}
& $[q_{ijk}]=[ 6, 0, 9, 0, 0, 15, 1, 1, 1, 0, 0, 0, 0, 1, 0]$  \\ \hline 
\end{tabular}

\begin{example}
$p=19$

\small
\begin{tabular}{|l|l|}
\hline $(f,f')$ & $X,Z,[q_{ijk}]$ \\ \hline \hline

\multirow{2}{*}{$(3,0)$} 
& $X: 18u^4 + 17u^2v^2 + 9u^2v + 3u^2 + 18v^4 + 5v^3 + 5v^2 + 18$  \\ \cline{2-2}
&  $Z: 5x^5 + 11x^4 + 13x^3 + 8x^2 + 10x + z^2$ \\ \cline{2-2}
& $[q_{ijk}]= [ 3, 0, 8, 0, 0, 0, 1, 1, 1, 0, 0, 0, 0, 3, 8 ] $ \\ \hline

\multirow{2}{*}{$(2,0)$}
& $X: 18u^4 + 17u^2v^2 + 12u^2v + 18u^2 + 18v^4 + 18v^3 + 9v^2 + 18$ \\ \cline{2-2} 
&  $Z:5x^5 + 11x^4 + 13x^3 + 8x^2 + 10x + z^2$  \\ \cline{2-2} 
& $[q_{ijk}]= [ 4, 0, 6, 0, 0, 0, 1, 1, 1, 0, 0, 0, 0, 3, 5]$ \\ \hline

\multirow{2}{*}{$(1,0)$}
& $X: 	18u^4 + 17u^2v^2 + 5u^2v + 18u^2 + 18v^4 + 6v^3 + 3v^2 + 6v + 4$  \\ \cline{2-2} 
&  $Z: 17x^5 + x^4 + x^3 + 18x^2 + 10x + z^2 + 13 $ \\  \cline{2-2} 
& $[q_{ijk}]= [ 12, 0, 3, 0, 0, 3, 1, 1, 1, 0, 0, 0, 0, 2, 8]$ \\ \hline

\multirow{2}{*}{$(0,0)$}
& $X: 18u^4 + 17u^2v^2 + 17u^2v + 4u^2 + 18v^4 + v^3 + 14v^2 + 12v + 1$ \\ \cline{2-2} 
&  $Z: 16x^5 + 9x^4 + 14x^3 + 10x^2 + 8x + z^2 + 3 $ \\  \cline{2-2} 
& $[q_{ijk}] = [ 11, 0, 4, 0, 0, 10, 1, 1, 1, 0, 0, 0, 0, 5, 4]$ \\ \hline
\end{tabular}
\end{example}
\end{example}

\begin{remark}
Since $k$ is an algebraically closed field of odd characteristic $p$, it is possible to 
diagonalize the quadratic form $Q_2$ and take its coefficients to be $0$ or $1$.
Even so, the complicated nature of the entries of $H_X$ and $H_Z$ makes it difficult to analyze the $p$-ranks algebraically.

The entries of $H_X$ are quite complicated even in terms of the coefficients of $q(u,v)= Q_2(u,v,1)^2-Q_1(u,v,1)Q_3(u,v,1)$.
For example, if $p=3$ and $q(u,v)=\sum_{i,j}{b_{ij}u^iv^j}$, 
then the upper left entry of $H_X$ is $2b_{00}b_{22} + 2b_{01}b_{21} + 2b_{02}b_{20} + 2b_{10}b_{12} + b_{11}^2$. 

Similarly, even the equation for $Z:z^2=D(x)$ is rather complicated in terms of the coefficients of $Q_1$ and $Q_3$.
\end{remark}

\section{The Hasse-Witt matrix of a smooth plane quartic defined as an intersection in ${\mathbb P}^3$} \label{sec:Frobenius}

In this section, we determine the Hasse-Witt matrix of a curve $C$ of genus $3$ defined as the intersection of a plane 
and degree 4 hypersurface in ${\mathbb P}^3$.
We use this result in Section~\ref{sec:backwards} to determine the Hasse-Witt matrix of each 
smooth plane quartic $X$ which has an unramified double cover $\pi:Y \to X$ such that $P_\pi$ is isomorphic to a fixed abelian surface.

As before, let $k$ be an algebraically closed field of characteristic $p>2$. 
Following \cite{Kudo}, let $C/k$ be a curve in $\mathbb{P}^3=\Proj (k[x,y,z,w])$ defined by $v=h=0$ for homogeneous polynomials $v,h\in k[x,y,z,w]$ with $\gcd(v,h)=1$. 
Let $r$ and $s$ denote the degrees of $v$ and $h$ respectively. Let $C^p$ denote the curve in $\mathbb{P}^3$ defined by $v^p=h^p=0$. For $n\in\mathbb{Z}$, let $\mathcal{O}_{\mathbb{P}^3}(n)$ denote the $n$th tensor power of Serre's twisting sheaf.

\begin{lemma} \cite[Lemma 3.1.3]{Kudo} \label{lem:commdiag1}
The following diagram is commutative with exact rows, where the composite map $\mathrm{H}^1(C,\mathcal{O}_C)\to \mathrm{H}^1(C,\mathcal{O}_C)$ is induced by the Frobenius morphism on $C$ and the map $F_1$ is the Frobenius morphism on $\mathbb{P}^3$.

\[
\xymatrix{0\ar[r] & \mathrm{H}^1(C,\mathcal{O}_C)\ar[r]\ar[d] & \mathrm{H}^3(\mathbb{P}^3,\mathcal{O}_{\mathbb{P}^3}(-r-s))\ar[r]\ar[d]^{F_1^*} & \mathrm{H}^3(\mathbb{P}^3,\mathcal{O}_{\mathbb{P}^3}(-r))\oplus \mathrm{H}^3(\mathbb{P}^3,\mathcal{O}_{\mathbb{P}^3}(-s))\ar[d]\\
0\ar[r] & \mathrm{H}^1(C^p,\mathcal{O}_{C^p})\ar[r] \ar[d]& \mathrm{H}^3(\mathbb{P}^3,\mathcal{O}_{\mathbb{P}^3}((-r-s)p))\ar[r]\ar[d]^{(vh)^{p-1}} & \mathrm{H}^3(\mathbb{P}^3,\mathcal{O}_{\mathbb{P}^3}(-rp))\oplus \mathrm{H}^3(\mathbb{P}^3,\mathcal{O}_{\mathbb{P}^3}(-sp))\ar[d]\\
0\ar[r] & \mathrm{H}^1(C,\mathcal{O}_C)\ar[r] & \mathrm{H}^3(\mathbb{P}^3,\mathcal{O}_{\mathbb{P}^3}(-r-s))\ar[r] & \mathrm{H}^3(\mathbb{P}^3,\mathcal{O}_{\mathbb{P}^3}(-r))\oplus \mathrm{H}^3(\mathbb{P}^3,\mathcal{O}_{\mathbb{P}^3}(-s))
}
\]

\end{lemma}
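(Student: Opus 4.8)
The plan is to exploit the hypothesis $\gcd(v,h)=1$, which forces $(v,h)$ to be a regular sequence and hence makes $C=V(v,h)$ a complete intersection of two hypersurfaces of degrees $r$ and $s$ in $\mathbb{P}^3$; likewise $C^p=V(v^p,h^p)$ is a complete intersection of degrees $rp$ and $sp$. Each is therefore resolved by a Koszul complex, and both the exactness of the rows and the commutativity of the squares will be obtained by chasing these resolutions through the cohomology of line bundles on $\mathbb{P}^3$.

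First I would write down the Koszul resolution of $\mathcal{O}_C$,
\[
0 \to \mathcal{O}_{\mathbb{P}^3}(-r-s) \to \mathcal{O}_{\mathbb{P}^3}(-r)\oplus\mathcal{O}_{\mathbb{P}^3}(-s) \to \mathcal{O}_{\mathbb{P}^3} \to \mathcal{O}_C \to 0,
\]
where the first map is $(-h,v)$ and the second is $(v,h)$, and split it into the two short exact sequences $0\to\mathcal{O}_{\mathbb{P}^3}(-r-s)\to\mathcal{O}_{\mathbb{P}^3}(-r)\oplus\mathcal{O}_{\mathbb{P}^3}(-s)\to\mathcal{I}_C\to0$ and $0\to\mathcal{I}_C\to\mathcal{O}_{\mathbb{P}^3}\to\mathcal{O}_C\to0$, where $\mathcal{I}_C$ is the ideal sheaf of $C$. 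Taking long exact cohomology sequences and invoking the standard vanishing $\mathrm{H}^1(\mathbb{P}^3,\mathcal{O}_{\mathbb{P}^3}(n))=\mathrm{H}^2(\mathbb{P}^3,\mathcal{O}_{\mathbb{P}^3}(n))=0$ for every $n$, the first sequence yields an identification of $\mathrm{H}^2(\mathbb{P}^3,\mathcal{I}_C)$ with the kernel of the map $\mathrm{H}^3(\mathbb{P}^3,\mathcal{O}(-r-s))\to\mathrm{H}^3(\mathbb{P}^3,\mathcal{O}(-r))\oplus\mathrm{H}^3(\mathbb{P}^3,\mathcal{O}(-s))$, while the second yields $\mathrm{H}^1(C,\mathcal{O}_C)\cong\mathrm{H}^2(\mathbb{P}^3,\mathcal{I}_C)$. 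Splicing these isomorphisms produces the exact top row; running the identical argument with $v^p,h^p$ (degrees $rp,sp$) produces the exact middle row, and the bottom row is literally the top row again.

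The heart of the matter is the commutativity, and here I would factor the absolute Frobenius. Since absolute Frobenius is functorial and $F_1^{\#}$ sends $v,h$ to $v^p,h^p$, one has $C^p=F_1^{-1}(C)$ scheme-theoretically, and $F_C:C\to C$ factors as $C\xrightarrow{\iota}C^p\xrightarrow{F_1|}C$, where $\iota$ is the closed immersion dual to the surjection $\mathcal{O}_{C^p}=\mathcal{O}/(v^p,h^p)\twoheadrightarrow\mathcal{O}/(v,h)=\mathcal{O}_C$ and $F_1|$ is the restriction of $F_1$. The upper vertical arrows are then induced by $F_1^{*}$, which carries $\mathcal{O}(n)$ to $\mathcal{O}(np)$ and so maps the $C$-row (degrees $r,s$) to the $C^p$-row (degrees $rp,sp$); the lower vertical arrows are induced by $\iota$, that is, by the comparison morphism of Koszul complexes attached to the inclusion of ideals $(v^p,h^p)\subseteq(v,h)$. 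On the top terms this comparison morphism is multiplication by $v^{p-1}h^{p-1}=(vh)^{p-1}$, a map $\mathcal{O}(-(r+s)p)\to\mathcal{O}(-(r+s))$ of the correct degree $(r+s)(p-1)$, which is exactly the middle arrow labeled $(vh)^{p-1}$. The composite of the two vertical maps then recovers $F_C^{*}=\iota^{*}\circ(F_1|)^{*}$, the Frobenius-induced endomorphism of $\mathrm{H}^1(C,\mathcal{O}_C)$.

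The main obstacle I anticipate is the bookkeeping in this last step. One must verify explicitly that the Koszul comparison map is multiplication by $(vh)^{p-1}$ on the top term (and is compatible with the lower differentials, which pins down the middle map $\mathrm{diag}(v^{p-1},h^{p-1})$), and then transport this identification through the two connecting homomorphisms, all while keeping in mind that absolute Frobenius is only $p$-semilinear over $k$, so every vertical arrow is $p$-linear rather than $k$-linear. The delicate point is the \emph{naturality} of the isomorphisms $\mathrm{H}^1(C,\mathcal{O}_C)\cong\mathrm{H}^2(\mathbb{P}^3,\mathcal{I}_C)\cong\ker(\cdots)$ with respect to $\iota$ and $F_1|$, so that the squares commute on the nose rather than merely up to a scalar; this, however, follows formally once the chain map of Koszul resolutions covering $F_C$ has been written down.
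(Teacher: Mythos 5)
Your proof is correct, and it is in substance the same argument as the one the paper relies on: the paper offers no proof of its own for this lemma (its ``proof'' is a one-line citation of the diagram preceding Proposition~3.1.4 in Kudo--Harashita), and the derivation you reconstruct is precisely the mechanism behind that citation --- split the Koszul resolution of the complete intersection $C$ (and of $C^p$, noting that $\gcd(v^p,h^p)=1$ keeps $(v^p,h^p)$ a regular sequence) into two short exact sequences, use the vanishing of $\mathrm{H}^1$ and $\mathrm{H}^2$ of all line bundles on $\mathbb{P}^3$ to obtain the exact rows, and factor the absolute Frobenius as $C\xrightarrow{\iota}C^p\xrightarrow{F_1|_{C^p}}C$, with the chain map of Koszul complexes covering $\iota$ given by $(vh)^{p-1}$ in top degree and $\mathrm{diag}(v^{p-1},h^{p-1})$ in middle degree, so that $F_C^*=\iota^*\circ (F_1|_{C^p})^*$ yields the two vertical layers of the diagram. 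All your identifications check out --- the commutation $(vh)^{p-1}\cdot(-h,v)=\mathrm{diag}(v^{p-1},h^{p-1})\cdot(-h^p,v^p)$ pins down the middle maps, your sign convention makes the surviving horizontal arrow multiplication by $v$, consistent with Lemma~\ref{cor:commdiag2}, and the two points you flag (flatness of $F_1$ so the pulled-back Koszul complex is again exact, and the fact that every vertical arrow is only $p$-linear) are exactly the formal verifications needed.
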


\begin{proof}
This is an excerpt from the diagram immediately preceding \cite[Proposition 3.1.4]{Kudo}.
\end{proof}

For $t\in\mathbb{Z}_{>0}$, the $k$-vector space $\mathrm{H}^3(\mathbb{P}^3, \mathcal{O}_{\mathbb{P}^3}(-t))$ has basis 
\[\{x^{k_1}y^{k_2}z^{k_3}w^{k_4} :(k_1,k_2,k_3,k_4)\in(\mathbb{Z}_{<0})^4, k_1+k_2+k_3+k_4=-t\}.\]  

\begin{lemma}\label{cor:commdiag2}
Suppose that $r\leq 3$. Then the following diagram is commutative with exact rows, where the map $F$ is the Frobenius morphism on $C$ and the map $F_1$ is the Frobenius morphism on $\mathbb{P}^3$.

\begin{equation}\label{eq:commdiag2}
\xymatrix{0\ar[r] & \mathrm{H}^1(C,\mathcal{O}_C)\ar[r]\ar[d]_{F^*} & \mathrm{H}^3(\mathbb{P}^3,\mathcal{O}_{\mathbb{P}^3}(-r-s))\ar[r]^{v}\ar[d]^{(vh)^{p-1}F_1^*} & \mathrm{H}^3(\mathbb{P}^3,\mathcal{O}_{\mathbb{P}^3}(-s))\ar[d]\\
0\ar[r] & \mathrm{H}^1(C,\mathcal{O}_C)\ar[r] & \mathrm{H}^3(\mathbb{P}^3,\mathcal{O}_{\mathbb{P}^3}(-r-s))\ar[r]^{v} & \mathrm{H}^3(\mathbb{P}^3,\mathcal{O}_{\mathbb{P}^3}(-s))
}
\end{equation}
\end{lemma}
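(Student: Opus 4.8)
The plan is to obtain the diagram \eqref{eq:commdiag2} directly from Lemma~\ref{lem:commdiag1}, by collapsing its three rows to two and then discarding a cohomology group that the hypothesis $r \leq 3$ forces to vanish. So the argument should be short: everything except one vanishing statement and one identification of a map is already contained in Kudo's diagram.

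First I would establish the vanishing that the hypothesis is designed to produce. By the basis for $\mathrm{H}^3(\mathbb{P}^3, \mathcal{O}_{\mathbb{P}^3}(-t))$ recalled just before the statement, a monomial $x^{k_1}y^{k_2}z^{k_3}w^{k_4}$ contributes exactly when all $k_i \leq -1$ and $k_1+k_2+k_3+k_4 = -t$. Since a sum of four integers each at most $-1$ is at most $-4$, no such monomial exists when $t \leq 3$, so $\mathrm{H}^3(\mathbb{P}^3, \mathcal{O}_{\mathbb{P}^3}(-t)) = 0$ whenever $t \leq 3$. Taking $t=r$ gives $\mathrm{H}^3(\mathbb{P}^3, \mathcal{O}_{\mathbb{P}^3}(-r)) = 0$, so in both the top and bottom rows of Lemma~\ref{lem:commdiag1} the right-hand direct sum collapses onto its surviving summand $\mathrm{H}^3(\mathbb{P}^3, \mathcal{O}_{\mathbb{P}^3}(-s))$. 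Note that no vanishing is needed in the middle row, which is only used as an intermediate stage.

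Next I would identify the surviving horizontal map. The unlabeled map into $\mathrm{H}^3(\mathbb{P}^3, \mathcal{O}_{\mathbb{P}^3}(-r)) \oplus \mathrm{H}^3(\mathbb{P}^3, \mathcal{O}_{\mathbb{P}^3}(-s))$ in Lemma~\ref{lem:commdiag1} is induced by the Koszul differential for the regular sequence $(v,h)$ cutting out the complete intersection $C$. Matching degrees pins down the two components: the map to $\mathrm{H}^3(\mathbb{P}^3, \mathcal{O}_{\mathbb{P}^3}(-r))$ raises degree by $s = \deg h$ and is thus multiplication by $h$, while the map to $\mathrm{H}^3(\mathbb{P}^3, \mathcal{O}_{\mathbb{P}^3}(-s))$ raises degree by $r = \deg v$ and is, up to sign, multiplication by $v$. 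After deleting the now-zero first summand, only the second component survives, so the horizontal map is multiplication by $v$, exactly as labeled in \eqref{eq:commdiag2} (the sign is immaterial, as it does not change kernel or image).

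Finally I would collapse the three rows of Lemma~\ref{lem:commdiag1} to the two rows of \eqref{eq:commdiag2} by composing the vertical maps in each column: in the left column the composite is $F^*$, the Frobenius on $\mathrm{H}^1(C,\mathcal{O}_C)$, as already asserted in Lemma~\ref{lem:commdiag1}; in the middle column it is $(vh)^{p-1} \circ F_1^*$; and in the right column it is the induced map on the surviving summand. Commutativity of \eqref{eq:commdiag2} then follows by pasting, in each column, the two commutative squares of Lemma~\ref{lem:commdiag1}, and exactness of the two rows is inherited from the (now simplified) top and bottom rows of that lemma. The only step needing genuine care is the bookkeeping in the previous paragraph — confirming which Koszul component survives and that it is multiplication by $v$ rather than $h$ — but since this is forced by the degree shift, I expect no serious obstacle.
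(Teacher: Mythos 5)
Your proof is correct and takes essentially the same route as the paper, whose entire proof is that the diagram follows from Lemma~\ref{lem:commdiag1} together with the vanishing $\mathrm{H}^3(\mathbb{P}^3,\mathcal{O}_{\mathbb{P}^3}(-r))=0$ for $r\leq 3$. Your extra bookkeeping --- the monomial-basis argument for the vanishing, the degree count identifying the surviving Koszul component as multiplication by $v$, and the column-wise pasting of the two commutative squares --- merely makes explicit what the paper leaves implicit.
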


\begin{proof}
This follows immediately from Lemma \ref{lem:commdiag1} and the fact that $\mathrm{H}^3(\mathbb{P}^3, \mathcal{O}_{\mathbb{P}^3}(-r))=0$ for $r\leq 3$.
\end{proof}

For $i,j\in\mathbb{Z}$, set $\delta_{ij}=\begin{cases}
1 & \textrm{if } i=j\\
0 & \textrm{if } i\neq j.
\end{cases}.$

Set $t=5$ and let 
\[S_{-5}=\{(k_1,k_2,k_3,k_4)\in(\mathbb{Z}_{<0})^4, k_1+k_2+k_3+k_4=-5\}.\]
Write $S_{-5}=\{(k^{(i)}_1,k^{(i)}_2,k^{(i)}_3,k^{(i)}_4)\}_{1 \leq i \leq 4}$, where 
$k^{(i)}_j=-1-\delta_{ij}$ for $1\leq i,j\leq 4$. 


\begin{proposition}\label{prop:Hasse-Witt}
Let $v$ and $h$ be homogeneous polynomials in $k[x,y,z,w]$ with $\gcd(v,h)=1$.
Suppose that $r={\rm deg}(v)=1$ and $s={\rm deg}(h)=4$.
Write $v=a_1x+a_2y+a_3z+a_4w$ and fix $t$, with $1 \leq t \leq 4$, such that $a_t\neq 0$. 
Let $C/k$ be the curve in $\mathbb{P}^3=\Proj (k[x,y,z,w])$ defined by $v=h=0$. 

Write $(vh)^{p-1} = \sum {c_{i_1,i_2,i_3,i_4}x^{i_1}y^{i_2}z^{i_3}w^{i_4}}$.  
For $1\leq i,j\leq 4$, write 
\[\gamma_{i,j}=c_{p(1+\delta_{1j})-(1+\delta_{1i}),\  p(1+\delta_{2j})-(1+\delta_{2i}), \ p(1+\delta_{3j})-(1+\delta_{3i}), \  p(1+\delta_{4j})-(1+\delta_{4i})}.\] Then the Hasse-Witt matrix of $C$ is given by 
\[{\rm HW}_C=(a_t^{p-1}\gamma_{i,j}-a_j^pa_t^{-1}\gamma_{i,t})_{1\leq i,j\leq 4, i\neq t, j\neq t}.\]
\end{proposition}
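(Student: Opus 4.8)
The plan is to run the entire computation inside the four-dimensional space $\mathrm{H}^3(\mathbb{P}^3,\mathcal{O}_{\mathbb{P}^3}(-5))$, using the commutative diagram \eqref{eq:commdiag2} of Lemma~\ref{cor:commdiag2} with $r=1$ and $s=4$. Since $r=1\leq 3$, that lemma already collapses the right-hand term to $\mathrm{H}^3(\mathbb{P}^3,\mathcal{O}_{\mathbb{P}^3}(-4))$, and the top row identifies $\mathrm{H}^1(C,\mathcal{O}_C)$ with $\ker\left(v\colon \mathrm{H}^3(\mathbb{P}^3,\mathcal{O}_{\mathbb{P}^3}(-5))\to \mathrm{H}^3(\mathbb{P}^3,\mathcal{O}_{\mathbb{P}^3}(-4))\right)$. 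Writing $\mu_i$ for the monomial basis vector of $\mathrm{H}^3(\mathbb{P}^3,\mathcal{O}_{\mathbb{P}^3}(-5))$ with exponent vector $(k^{(i)}_1,\dots,k^{(i)}_4)$, $k^{(i)}_j=-1-\delta_{ij}$, and $\nu=x^{-1}y^{-1}z^{-1}w^{-1}$ for the single basis vector of $\mathrm{H}^3(\mathbb{P}^3,\mathcal{O}_{\mathbb{P}^3}(-4))$, the whole statement becomes a matrix computation of the vertical maps in \eqref{eq:commdiag2} in these explicit bases. Throughout I would use the standard fact that a monomial $x^{m_1}y^{m_2}z^{m_3}w^{m_4}$ represents a nonzero class in $\mathrm{H}^3(\mathbb{P}^3,\mathcal{O}_{\mathbb{P}^3}(-t))$ only when every $m_\ell\leq -1$; a class with some $m_\ell\geq 0$ vanishes.

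Second, I would pin down the top row explicitly. Multiplying $\mu_i$ by $v=\sum_\ell a_\ell x_\ell$ raises the $\ell$th exponent by one, so the term $a_i x_i\mu_i$ produces $a_i\nu$ while every other term acquires a non-negative exponent and dies; hence $v(\mu_i)=a_i\nu$. Therefore $\ker(v)$ consists of the classes $\sum_i c_i\mu_i$ with $\sum_i a_i c_i=0$, and, because $a_t\neq 0$, the vectors $\tilde e_j:=a_t\mu_j-a_j\mu_t$ for $j\neq t$ form a basis of $\mathrm{H}^1(C,\mathcal{O}_C)$. This is the basis with respect to which I expect the stated Hasse-Witt matrix to be computed.

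Third, I would compute the middle vertical map $\Phi=(vh)^{p-1}F_1^*$ on the monomial basis. The Frobenius $F_1^*$ multiplies every exponent by $p$, sending $\mu_j$ to the class with exponent vector $\bigl(-p(1+\delta_{\ell j})\bigr)_\ell$ in $\mathrm{H}^3(\mathbb{P}^3,\mathcal{O}_{\mathbb{P}^3}(-5p))$; then multiplication by $(vh)^{p-1}=\sum c_{i_1,\dots,i_4}x^{i_1}y^{i_2}z^{i_3}w^{i_4}$, of degree $5(p-1)$, lands back in $\mathrm{H}^3(\mathbb{P}^3,\mathcal{O}_{\mathbb{P}^3}(-5))$. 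By the vanishing criterion, the only surviving contribution to the coefficient of $\mu_i$ comes from the single monomial of $(vh)^{p-1}$ whose exponents are $i_\ell=p(1+\delta_{\ell j})-(1+\delta_{\ell i})$, giving $\Phi(\mu_j)=\sum_i \gamma_{i,j}\,\mu_i$ with $\gamma_{i,j}$ exactly as defined. So $\Phi$ is represented by the matrix $(\gamma_{i,j})$.

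Finally, I would assemble the answer from commutativity and semilinearity. Commutativity of the left square in \eqref{eq:commdiag2} together with exactness forces $\Phi$ to preserve $\ker(v)=\mathrm{H}^1(C,\mathcal{O}_C)$ and to restrict there to $F^*$. Since $F^*$, like $F_1^*$, is $p$-linear, $\Phi(\tilde e_j)=a_t^p\Phi(\mu_j)-a_j^p\Phi(\mu_t)=\sum_i\bigl(a_t^p\gamma_{i,j}-a_j^p\gamma_{i,t}\bigr)\mu_i$; re-expanding this kernel element in the basis $\{\tilde e_i\}_{i\neq t}$, using that $v$-closedness makes the $\mu_i$-coordinate ($i\neq t$) equal to $a_t$ times the $\tilde e_i$-coordinate, divides each entry by $a_t$ and yields $a_t^{p-1}\gamma_{i,j}-a_j^p a_t^{-1}\gamma_{i,t}$, which is the claimed matrix. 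The step I expect to be most delicate is precisely this last bookkeeping: the $p$-linearity of Frobenius is what turns the scalars $a_t,a_j$ into $a_t^p,a_j^p$, and the passage between the $\mu$- and $\tilde e$-coordinates is what produces the single factor $a_t^{-1}$, equivalently the normalization $a_t^{p-1}$; keeping these powers and the index placement in $\gamma_{i,j}$ consistent is where index errors are easiest to make. The Frobenius-on-local-cohomology computation in the third step is conceptually the crux but is routine once the vanishing criterion is in hand.
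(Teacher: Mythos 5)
Your proposal is correct and takes essentially the same route as the paper's proof: your basis $\mu_i$ and kernel basis $\tilde e_j=a_t\mu_j-a_j\mu_t$ are exactly the paper's $e_i$ and $\beta^{(t)}_j$, your computation $\Phi(\mu_j)=\sum_i\gamma_{i,j}\mu_i$ is the paper's matrix $H_0$, and your final re-expansion in the kernel basis reproduces the paper's coefficient comparison $\lambda_{i,j}=a_t^{-1}\bigl(a_t^p\gamma_{i,j}-a_j^p\gamma_{i,t}\bigr)$. You even flag correctly the two genuinely delicate points (the $p$-linearity producing $a_t^p,a_j^p$ and the single factor $a_t^{-1}$ from changing coordinates), so nothing is missing.
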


Let $t=4$ and $a_4=1$ and write $a=a_1$, $b=a_2$ and $c=a_3$.
Then the matrix ${\rm HW}_C$ in 
Proposition \ref{prop:Hasse-Witt} equals
{\tiny {\begin{equation*}
\begin{pmatrix}c_{2p-2,p-1,p-1,p-1} - a^p c_{p-2,p-1,p-1,2p-1}& c_{p-2,2p-1,p-1,p-1} - b^p c_{p-2,p-1,p-1,2p-1} &
c_{p-2,p-1,2p-1,p-1} - c^p c_{p-2,p-1,p-1,2p-1}  \\
c_{2p-1,p-2,p-1,p-1} -a^p c_{p-1,p-2,p-1,2p-1} & c_{p-1,2p-2,p-1,p-1} -b^p c_{p-1,p-2,p-1,2p-1} &c_{p-1,p-2,2p-1,p-1} - c^p c_{p-1,p-2,p-1,2p-1} \\
c_{2p-1,p-1,p-2,p-1} - a^p c_{p-1,p-1,p-2,2p-1} & c_{p-1,2p-1,p-2,p-1} - b^p c_{p-1,p-1,p-2,2p-1} & c_{p-1,p-1,2p-2,p-1} - c^p c_{p-1,p-1,p-2,2p-1} \\
\end{pmatrix}.
\end{equation*}}
}

\begin{proof}
Consider the multiplication-by-$v$ map $[\times v]: \mathrm{H}^3(\mathbb{P}^3,\mathcal{O}_{\mathbb{P}^3}(-5))\to \mathrm{H}^3(\mathbb{P}^3,\mathcal{O}_{\mathbb{P}^3}(-4))$.
By Lemma \ref{cor:commdiag2}, computing the matrix of $F^*$ is equivalent to computing the matrix of $(vh)^{p-1}F_1^*$ on the kernel of $[\times v]$. 

First, we compute the matrix of $(vh)^{p-1}F_1^*$ on all of $\mathrm{H}^3(\mathbb{P}^3,\mathcal{O}_{\mathbb{P}^3}(-5))$. The $k$-vector space $\mathrm{H}^3(\mathbb{P}^3, \mathcal{O}_{\mathbb{P}^3}(-5))$
 is $4$-dimensional with basis 
 \[\{x^{k_1}y^{k_2}z^{k_3}w^{k_4} :(k_1,k_2,k_3,k_4)\in S_{-5}\}.\]
 Explicitly, a basis is given by
 \[e_1=x^{-2}y^{-1}z^{-1}w^{-1}, e_2=x^{-1}y^{-2}z^{-1}w^{-1}, e_3=x^{-1}y^{-1}z^{-2}w^{-1}, e_4=x^{-1}y^{-1}z^{-1}w^{-2}.\] 

 As in the proof of \cite[Proposition 3.1.4]{Kudo}, for each $j\in \{1, \dots, 4\}$, then
  \begin{eqnarray*}
 (vh)^{p-1}F_1^*(e_j) &=& (vh)^{p-1}F_1^*(x^{k^{(j)}_1}y^{k^{(j)}_2}z^{k^{(j)}_3}w^{k^{(j)}_4})\\
  &=& (vh)^{p-1}x^{pk^{(j)}_1}y^{pk^{(j)}_2}z^{pk^{(j)}_3}w^{pk^{(j)}_4}\\
  &=&\sum {c_{i_1,i_2,i_3,i_4}x^{i_1+pk^{(j)}_1}y^{i_2+pk^{(j)}_2}z^{i_3+pk^{(j)}_3}w^{i_4+pk^{(j)}_4}}\\
  &=&\sum_{i=1}^4{c_{k_1^{(i)}-pk_1^{(j)}, k_2^{(i)}-pk_2^{(j)}, k_3^{(i)}-pk_3^{(j)}, k_4^{(i)}-pk_4^{(j)}}x^{k^{(i)}_1} y^{k^{(i)}_2} z^{k^{(i)}_3} w^{k^{(i)}_4}}\\
  &=&\sum_{i=1}^4{\gamma_{i,j}e_i}.
\end{eqnarray*}
 Explicitly, the following $4$-by-$4$ matrix $H_0$ represents the map 
$(vh)^{p-1}F_1^*$ on $\mathrm{H}^3(\mathbb{P}^3,\mathcal{O}_{\mathbb{P}^3}(-5))$, with respect to the basis $e_1, e_2, e_3 , e_4$:
\begin{equation} \label{E4by4}
H_0=\begin{pmatrix}c_{2p-2,p-1,p-1,p-1} & c_{p-2,2p-1,p-1,p-1} &c_{p-2,p-1,2p-1,p-1} & c_{p-2,p-1,p-1,2p-1} \\
c_{2p-1,p-2,p-1,p-1} & c_{p-1,2p-2,p-1,p-1} &c_{p-1,p-2,2p-1,p-1} & c_{p-1,p-2,p-1,2p-1} \\
c_{2p-1,p-1,p-2,p-1} & c_{p-1,2p-1,p-2,p-1} & c_{p-1,p-1,2p-2,p-1}& c_{p-1,p-1,p-2,2p-1} \\
c_{2p-1,p-1,p-1,p-2} & c_{p-1,2p-1,p-1,p-2} & c_{p-1,p-1,2p-1,p-2}& c_{p-1,p-1,p-1,2p-2}
\end{pmatrix}.
\end{equation}

Now we calculate the $3$-by-$3$ matrix representing the restriction of $(vh)^{p-1}F_1^*$ to the kernel of $[\times v]$ on $\mathrm{H}^3(\mathbb{P}^3,\mathcal{O}_{\mathbb{P}^3}(-5))$. First, note that if $\ell\in \mathrm{H}^3(\mathbb{P}^3,\mathcal{O}_{\mathbb{P}^3}(-5))$ is in ${\rm Ker}([\times v])$, then $(vh)^{p-1}F_1^*(\ell)$ is also in ${\rm Ker}([\times v])$, by the commutativity of \eqref{eq:commdiag2}.

The $k$-vector space $\mathrm{H}^3(\mathbb{P}^3, \mathcal{O}_{\mathbb{P}^3}(-4))$
 is $1$-dimensional with basis element $\lambda = x^{-1}y^{-1}z^{-1}w^{-1}$. Note that $v\cdot e_i=a_i\lambda$.
 Thus ${\rm Ker}([\times v]) = \{ \sum_{i=1}^4 c_i e_i  \mid \sum_{i=1}^4 a_i c_i = 0\}$.
For $1\leq i,j \leq 4$, write $\beta^{(i)}_j=a_ie_j-a_je_i$. 
If $a_t\neq 0$, then ${\rm Ker}([\times v])$ has basis $\{\beta^{(t)}_{j}\}_{1\leq j\leq 4, j\neq t}$. 
It follows that
\begin{eqnarray}
\nonumber (vh)^{p-1}F_1^*(\beta^{(t)}_j)&=&a_t^p(vh)^{p-1}F_1^*(e_j)-a_j^p(vh)^{p-1}F_1^*(e_t)\\
\nonumber&=& a_t^p\sum_{i=1}^4{\gamma_{i,j}e_i}-a_j^p\sum_{i=1}^4{\gamma_{i,t}e_i}\\
&=& \sum_{i=1}^4{(a_t^p\gamma_{i,j}-a_j^p\gamma_{i,t})e_i}.\label{eq:actonbeta}
\end{eqnarray}

The commutativity of the diagram \eqref{eq:commdiag2} shows that $(vh)^{p-1}F_1^*(\beta^{(t)}_j)$ is in ${\rm Ker}([\times v])$.
Therefore, there are coefficients $\lambda_{i,j}\in k$ such that for $j\neq t$, 
\begin{eqnarray}
\nonumber (vh)^{p-1}F_1^*(\beta^{(t)}_j)&=&\sum_{1\leq i\leq 4, i\neq t}{\lambda_{i,j}\beta^{(t)}_i}\\
&=& \sum_{1\leq i\leq 4,i\neq t}{\lambda_{i,j}(a_te_i-a_i e_t)}.\label{eq:actonbeta2}
\end{eqnarray}
Comparing the coefficients of $e_i$ for $i\neq t$ in \eqref{eq:actonbeta} and \eqref{eq:actonbeta2}, we see that \[\lambda_{i,j}=a_t^{-1}(a_t^p\gamma_{i,j}-a_j^p\gamma_{i,t}).\]
This completes the proof of Proposition \ref{prop:Hasse-Witt}.
\end{proof}

\section{The fiber of the Prym map when $g=3$}\label{sec:backwards}

In Section~\ref{sec:forwards}, we used a description from \cite{Bruin2008} of an unramified double cover 
$\pi:Y\to X$ of a plane quartic curve $X$ and its Prym $P_\pi$ in terms of quadratic forms.
We then calculated the Hasse-Witt matrices of $X$ and $P_\pi$ and produced examples where $X$ and $P_\pi$ 
have specified $p$-ranks for small primes $p$. However, since the entries of the Hasse-Witt matrices are very complicated, 
it is not clear how to apply this method for arbitrarily large primes $p$. 

In the current section, we describe an alternative method 
in which we start with a smooth curve $Z$ of genus $2$ over $k$
and construct smooth plane quartic curves $X$ 
having an unramified double cover $\pi: Y \to X$ 
such that $P_\pi \simeq \Jac(Z)$. 
The advantage of this alternative method is that it allows us to prove an existence result for infinitely many primes.  
In particular, 
in Proposition~\ref{Pexist5mod6}, we prove that
if $p \equiv 5 \bmod 6$, then there exists a smooth curve $X$ defined over ${\overline{\mathbb F}}_p$ with
genus $3$ and $p$-rank $3$
having an unramified double cover $\pi: Y \longrightarrow X$ such that 
$P_{\pi}$ has $p$-rank $0$.

Here is an outline of the section.
In Section \ref{SVerra}, we review a result of Verra that describes the geometry of the fibers 
of the Prym map ${\mathcal R}_3 \to {\mathcal A}_2$.
In Section \ref{Sexplicitfiber}, we work with an explicit construction of the curves
represented by points in the irreducible $3$-dimensional component of the fiber above ${\rm Jac}(Z)$.  
These curves occur as the intersection 
in ${\mathbb P}^3$ of a plane and the Kummer surface of ${\rm Jac}(Z)$.
They are smooth plane curves $X$ having
an unramified double cover $\pi:Y \to X$ 
such that $P_\pi \simeq {\rm Jac}(Z)$.
In Section \ref{SCommAlg}, we describe the determinant of the Hasse-Witt matrix of $X$.
The main application when $p \equiv 5 \bmod 6$ is in Section \ref{Sexist5mod6}.

In Section \ref{Scommalgmore}, we
use commutative algebra to characterize when $X$ is non-ordinary (under conditions on the $p$-rank of $Z$).
In Section \ref{Sp3fiber}, we fix $p=3$ and 
apply the results of the section to a $1$-dimensional family of genus $2$ curves $Z$ with $3$-rank $0$. 
This allows us to deduce information about the locus of planes $V$ for which $X$ is non-ordinary
and the geometry of the corresponding moduli space ${\mathcal R}_3^{(2,0)}$.

\subsection{Review of work of Verra} \label{SVerra}

Let $A$ be a principally polarized abelian surface.
Let ${\mathcal A}_2$ be the moduli space of principally polarized abelian surfaces.
Let $s$ be the point of ${\mathcal A}_2$ representing $A$.
We would like to consider the fiber of the Prym map 
$Pr_3: {\mathcal R}_3 \to {\mathcal A}_2$ over $s$. 
(More precisely, let $\tilde{\mathcal{A}}_2$ denote the smooth toroidal compactification of ${\mathcal A}_2$ 
and $\bar{\mathcal R}_3$ the compactification of ${\mathcal R}_3$ and consider the 
fiber of $\bar{Pr}_3: \bar{{\mathcal R}}_3 \to \tilde{{\mathcal A}}_2$ over $s$.)

Following \cite[Section~2]{Verra}, let $\Theta \subset A$ be a symmetric theta divisor.
Suppose that ${\rm Aut}(\Theta) \simeq {\mathbb Z}/2$.
Under this mild condition on $s$, Verra proves in \cite[Corollary 4.1]{Verra} that 
$\bar{Pr}_3^{-1}(s)$ is a blow-up of ${\mathbb P}^3$.  Moreover, $\bar{Pr}_3^{-1}(s)$
has one irreducible component $N_s$ of dimension $3$ 
and three components of dimension $2$.  By \cite[(3.14)-(3.16), page 442]{Verra}, 
the latter represent unramified double covers of hyperelliptic or singular curves whose Prym is isomorphic to $A$. 
The generic point of $N_s$ represents an unramified double cover $\pi:Y \to X$ where $X$ is a 
smooth plane quartic and $P_\pi \simeq A$.
We briefly review the results of Verra in more detail below.

The linear system $|2 \Theta|$ has dimension $3$ and is thus isomorphic to ${\mathbb P}^3$.
Every element $\tilde{C}$ in the linear system is a curve of arithmetic genus $5$ with an involution.
The linear system is base point free and its generic element is a smooth irreducible curve.
For each $\tilde{C} \in |2 \Theta|$, there is a morphism $\psi: A \to ({\mathbb P}^3)^{\wedge}$, 
where the wedge in superscript indicates taking the dual space.
Let $K=\psi(A)$. 
Then ${\rm deg}(\psi) = 2$ if and only if $A={\rm Jac}(Z)$ for some smooth irreducible curve $Z$ of genus $2$; 
(if not, then ${\rm deg}(\psi)=4$).
If ${\rm deg}(\psi)=2$, then $K$ is the Kummer quartic surface of $A$. 

By \cite[page 438]{Verra}, this yields a map $\phi: {\mathbb P}^3 - T \to \bar{Pr}_3^{-1}(s)$.
Here $T$ denotes the set of $\tilde{C}$ which are not stable.
By \cite[(2.1)]{Verra}, $T= B \cap K^{\wedge}$.  Note that
$K^{\wedge} \subset ((\mathbb{P}^3)^{\wedge})^{\wedge} \simeq {\mathbb P}^3$ is birational to $K$.
Here $B$ denotes the union of $B_\tau$ where $\tau$ is a 2-torsion point of $A$ and 
$B_\tau$ denotes the set of $\tilde{C}$ in the linear system $|2 \Theta|$ such that $\tilde{C}$ contains $\tau$.

\subsection{Explicit version of the fiber of the Prym map} \label{Sexplicitfiber}

Let $Z$ be a smooth curve of genus 2. 
The results of \cite{Verra} give a way to find all smooth plane quartics $X$ having an unramified double cover 
$\pi:Y \to X$ such that $P_\pi \simeq {\rm Jac}(Z)$.
This is discussed in \cite[Section 7]{Bruin2008}, where Bruin shows how to recover a model 
of the form $X: Q_1Q_3=(Q_2)^2$ from a smooth plane section $X$ of the Kummer surface $K$.

Consider the Kummer surface $$K = \Jac(Z)/ \langle -1 \rangle \subset {\mathbb P}^3$$ associated to $Z$, 
namely the quotient of $\Jac(Z)$ by the Kummer involution. 
It is a quartic surface, with 16 singularities corresponding to $\Jac(Z) [2] \simeq (\mathbb{Z} / 2 \mathbb{Z})^{2g}$.
Let $\phi: \Jac(Z) \longrightarrow K$ be the degree $2$ quotient map.

For a sufficiently general plane $V \subseteq {\mathbb P}^3$, the intersection $$X = V \cap K$$ is a smooth quartic plane curve. 
This implies that $X$ does not contain any of the branch points of $\phi$.
Thus the restriction of $\phi$ to $Y = \phi^{-1}(X)$ is an unramified double cover $\pi:Y \to X$.
Since $Y$ is in $|2 \Theta|$, 
the Prym variety $P_\pi$ is isomorphic to $\Jac(Z)$, as seen on \cite[page 616]{MR1013156}.
Conversely, by Verra's result, if $\Jac(Z)$ is isomorphic to the Prym variety of 
an unramified double cover $\pi:Y \to X$, with $X$ a smooth plane quartic, 
then $X$ is isomorphic to a planar section of $K$ and $Y$ is its preimage in $\Jac(Z)$.

\subsubsection{The Kummer surface} \label{kummer}

Suppose that $Z$ is a smooth curve of genus $2$ with affine equation $Z: z^2=D(x):=\sum_{i=0}^6 d_ix^i$.
Consider the Kummer surface $K = \Jac(Z)/ \langle-1\rangle$ associated to $Z$, 
which is a quartic surface in $\mathbb{P}^3$. 
In this section, we write down the equation of $K$ as found in \cite[Chapter~3, Section~1]{CasselsFlynn}.

There is a map $\phi: \Jac(Z) \to K$ defined as follows.
A generic divisor of degree $2$ on $Z$ has the form $(x_1,z_1) + (x_2,z_2)$. 
Let $Z_\infty$ be the divisor above $x=\infty$.  Then
\begin{align*}
  \phi \colon \Jac(Z) &\rightarrow K \\
  [(x_1,z_1)+(x_2,z_2) -Z_\infty] &\mapsto [1:x_1+x_2:x_1x_2:\beta_0],
\end{align*}
where $\beta_0=(F_0(x_1,x_2)-2z_1z_2)/(x_1-x_2)^2$ and $F_0(x_1,x_2)$ equals
\[2d_0+d_1(x_1+x_2)+2d_2x_1x_2+d_3(x_1+x_2)x_1x_2+2d_4(x_1x_2)^2+d_5(x_1+x_2)(x_1x_2)^2+2d_6(x_1x_2)^3.\] 
The map $\phi$ realizes $\Jac(Z)$ as a double cover of $K$ that ramifies precisely at $\Jac(Z)[2]$. 
It maps the $16$ points of order $2$ of $\Jac(Z)$ to the 16 singularities of $K$. 

 Let $X_1, \ldots, X_4$ denote the coordinates on $\mathbb{P}^3$.
 By \cite[(3.1.8)]{CasselsFlynn}, a projective model of the Kummer surface $K$ in ${\mathbb P}^3$ is the zero locus of the following equation, 
\begin{eqnarray} \label{Ekummer}
\kappa(X_1,X_2,X_3,X_4)=K_2X_4^2+K_1X_4+K_0
\end{eqnarray}
with
\begin{eqnarray*}
K_2&=&X_2^2-4X_1X_3\\
K_1&=&-2(2d_0X_1^3+d_1X_1^2X_2+2d_2X_1^2X_3+d_3X_1X_2X_3+2d_4X_1X_3^2+d_5X_2X_3^2+2d_6X_3^3)\\
K_0&=&(d_1^2-4d_0d_2)X_1^4-4d_0d_3X_1^3X_2-2d_1d_3X_1^3X_3-4d_0d_4X_1^2X_2^2\\
&&+4(d_0d_5-d_1d_4)X_1^2X_2X_3+(d_3^2+2d_1d_5-4d_2d_4-4d_0d_6)X_1^2X_3^2-4d_0d_5X_1X_2^3\\
&&+4(2d_0d_6-d_1d_5)X_1X_2^2X_3+4(d_1d_6-d_2d_5)X_1X_2X_3^2-2d_3d_5X_1X_3^3-4d_0d_6X_2^4\\
&&-4d_1d_6X_2^3X_3-4d_2d_6X_2^2X_3^2-4d_3d_6X_2X_3^3+(d_5^2-4d_4d_6)X_3^4,
\end{eqnarray*}
where $d_0, \ldots, d_6$ are the coefficients of $D(x)$ in the equation for $Z$. 

This model of the Kummer surface $K$ in ${\mathbb P}^3$ arises from a projective model of $\Jac(Z)$ in ${\mathbb P}^{15}$;
explicit calculations are thus more efficient on the Kummer surface.
Alternatively, by combining a few Frobenius identities of theta characteristic functions, 
one can derive another projective model of $K$ parametrized by four theta constants \cite[Section 7, Chapter IIIa]{Mumford}.

\subsubsection{Plane quartics as planar sections of the Kummer surface} \label{Splanequartic}

Let $K$ be the Kummer surface from \eqref{Ekummer}.
For a plane $V \subset {\mathbb P}^3$,
consider the curve
$$ X = V \cap K.$$  If $X$ is smooth, then it has genus $3$ and the pullback of ${\rm Jac}(Z) \to K$ to $X$ yields an 
unramified double cover $\pi:Y \to X$ such that the Prym variety $P_\pi$ is isomorphic to $\Jac(Z)$. 

Let $V=V_{a,b,c,d}$ be a plane defined over $k$ by 
\begin{equation} \label{DefV}
V : \ v(X_1,X_2,X_3,X_4)=aX_1+bX_2+cX_3+dX_4=0.
\end{equation}
The point $(0:0:0:1)$ is a singular point of the Kummer surface.  For planes $V$ which avoid the singularities of $K$, 
it is no restriction to take $d=1$.

\subsection{The Hasse-Witt matrix of $X$} \label{SCommAlg}

In Section \ref{sec:Frobenius}, we determined the Hasse-Witt matrix for a curve $X$ given 
as the intersection of a plane and quartic surface in ${\mathbb P}^3$. 
Recall that $X = V \cap K \subset {\mathbb P}^3$ where $K: \kappa =0$ and $V: v=0$ 
are defined in \eqref{Ekummer} and \eqref{DefV}.
Recall that $c_{i_1,i_2,i_3,i_4}$ is the coefficient of $X_1^{i_1}X_2^{i_2}X_3^{i_3}X_4^{i_4}$ in $(v \kappa)^{p-1}$.
In other words, 
$$(v \kappa)^{p-1}= \sum_{i_1+i_2+i_3+i_4=5(p-1)} c_{i_1,i_2,i_3,i_4} X_1^{i_1}X_2^{i_2}X_3^{i_3}X_4^{i_4}.$$ 
Proposition~\ref{prop:Hasse-Witt} describes the Hasse-Witt matrix $H_X$ of $X$ in terms of the coefficients $c_{i_1,i_2,i_3,i_4}$ and $a,b,c,d$. 

\begin{lemma} \label{LhomHX}
Setting $d=1$, then the coefficients of $H_X$ are each homogeneous of degree $2(p-1)$ in 
$a,b,c, d_0, \ldots, d_6$.
\end{lemma}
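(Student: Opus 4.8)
The plan is to exhibit a grading on the coefficient ring under which both $v$ and $\kappa$ become homogeneous, and then to read off the degrees of the entries of $H_X$ directly from the explicit formula in Proposition~\ref{prop:Hasse-Witt}.

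First I would assign weights to the variables as follows: give $X_1, X_2, X_3$ weight $0$ and $X_4$ weight $1$, and give each of $a, b, c, d_0, \ldots, d_6$ weight $1$. The key structural observation is that, with these weights, both defining polynomials are homogeneous. Inspecting \eqref{DefV} with $d=1$, every monomial of $v = aX_1 + bX_2 + cX_3 + X_4$ has weight $1$, so $v$ is homogeneous of weight $1$. For $\kappa$ in \eqref{Ekummer}, one checks term by term that $K_2$ has weight $0$, that $K_1$ has weight $1$ (each of its monomials carries exactly one factor $d_i$), and that $K_0$ has weight $2$ (each of its monomials carries exactly two factors among $d_0, \ldots, d_6$); hence $K_2 X_4^2$, $K_1 X_4$ and $K_0$ all have weight $2$, so $\kappa$ is homogeneous of weight $2$. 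This balancing of the $d_i$-degree against the power of $X_4$ is really the heart of the matter, and it is the step where I would be most careful.

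Granting this, $(v\kappa)^{p-1}$ is homogeneous of weight $3(p-1)$. Writing it as $\sum c_{i_1,i_2,i_3,i_4} X_1^{i_1}X_2^{i_2}X_3^{i_3}X_4^{i_4}$ with $c_{i_1,i_2,i_3,i_4} \in k[a,b,c,d_0,\ldots,d_6]$, the monomial $X_1^{i_1}X_2^{i_2}X_3^{i_3}X_4^{i_4}$ has weight $i_4$, so each coefficient $c_{i_1,i_2,i_3,i_4}$ must be homogeneous of weight $3(p-1)-i_4$. Since every variable $a,b,c,d_0,\ldots,d_6$ has weight $1$, this weight coincides with ordinary total degree; thus $c_{i_1,i_2,i_3,i_4}$ is homogeneous of degree $3(p-1)-i_4$ in $a,b,c,d_0,\ldots,d_6$.

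Finally I would substitute this into the formula of Proposition~\ref{prop:Hasse-Witt}, where (with $t=4$ and $a_4=1$) each entry of $H_X$ has the shape $\gamma_{i,j}-a_j^p\gamma_{i,4}$ for $i,j\in\{1,2,3\}$. For $i,j\in\{1,2,3\}$ the multi-index defining $\gamma_{i,j}$ has fourth coordinate $p-1$, so $\gamma_{i,j}$ has degree $3(p-1)-(p-1)=2(p-1)$; while $\gamma_{i,4}$ has fourth coordinate $2p-1$, so it has degree $3(p-1)-(2p-1)=p-2$, and multiplying by $a_j^p$ (degree $p$) restores degree $2(p-1)$. Hence both summands, and therefore every entry of $H_X$, are homogeneous of degree $2(p-1)$, which is the claim. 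No genuine obstacle remains beyond this index bookkeeping; the only real content is the choice of weights that makes $\kappa$ homogeneous.
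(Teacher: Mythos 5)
Your proof is correct and is essentially the paper's own argument: the weighted grading you introduce (weight $1$ on $a,b,c,d_0,\ldots,d_6,X_4$ and weight $0$ on $X_1,X_2,X_3$) is exactly the paper's observation that $\kappa$ is homogeneous of degree $2$ in $d_0,\ldots,d_6,X_4$ and $v$ of degree $1$ in $a,b,c,X_4$, so $(\kappa v)^{p-1}$ has degree $3(p-1)$. The remaining bookkeeping — coefficients with $i_4=p-1$ having degree $2(p-1)$, those with $i_4=2p-1$ having degree $p-2$ and regaining degree $2(p-1)$ after multiplication by $a^p,b^p,c^p$ — matches the paper's treatment of the matrices $U$ and $C$ in $H_X=U-C[a^p,b^p,c^p]$ step for step.
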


\begin{proof}
First, note that the equation $\kappa$ in \eqref{Ekummer} for $K$ is homogeneous of degree $2$ in $d_0, \ldots, d_6,X_4$.
This is because $K_2 X_4^2$, $K_1 X_4$, and $K_0$ are each homogeneous of degree $2$ 
in $d_0, \ldots, d_6,X_4$.
Also, the equation $v$ for $V$ is homogeneous of degree $1$ in $a,b,c,X_4$.
Thus $(\kappa v)^{p-1}$ is 
homogeneous of degree $3(p-1)$ in $a,b,c, d_0, \ldots, d_6, X_4$.
The coefficients of the $4 \times 4$ matrix $H_0$ from \eqref{E4by4} are coefficients of $(\kappa v)^{p-1}$. 

We now determine information about the coefficients of the Hasse-Witt matrix $H_X$.
Set $d=1$.  Let $U$ be the $3 \times 3$ matrix obtained by removing the $4$th row and $4$th column of $H_0$.
Let $C = [c_{p-2,p-1,p-1,2p-1}, c_{p-1, p-2, p-1, 2p-1}, c_{p-1,p-1,p-2,2p-1}]^T$.
By Proposition \ref{prop:Hasse-Witt}, $H_X = U - C [a^p, b^p, c^p]$.
The coefficients of $U$ are of the form $c_{i_1,i_2,i_3,i_4}$ with $i_4 = p-1$;
thus they are each homogeneous of degree $2(p-1)$ in $a,b,c, d_0, \ldots, d_6$. 
The coefficients of $C$ are of the form $c_{i_1,i_2,i_3,i_4}$ with $i_4 = 2p-1$;
thus they are each homogenous of degree $p-2$ in $a,b,c, d_0, \ldots, d_6$. 
Thus each coefficient of $H_X = U - C [a^p, b^p, c^p]$ is homogenous of degree $2(p-1)$ in 
$a,b,c, d_0, \ldots, d_6$.
\end{proof}

\subsection{An existence result for each $p \equiv 5 \bmod 6$} \label{Sexist5mod6}

\begin{proposition} \label{Pexist5mod6}
If $p \equiv 5 \bmod 6$, then there exists a smooth curve $X$ defined over ${\overline{\mathbb F}}_p$ with
genus $3$ and $p$-rank $3$
having an unramified double cover $\pi: Y \longrightarrow X$ such that 
$P_{\pi}$ has $p$-rank $0$.
More generally, $R_3^{(3,0)}$ is non-empty of dimension $4$
for each prime $p \equiv 5 \bmod 6$. 
\end{proposition}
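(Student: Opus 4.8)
The plan is to realize the Prym variety as the explicit supersingular genus $2$ curve $Z: z^2 = x^6-1$ and then to produce a smooth \emph{ordinary} plane section of its Kummer surface. First I would check that $Z$ is admissible: for $p \equiv 5 \bmod 6$ we have $p \geq 5$, so $x^6-1$ is separable and $Z$ is smooth, and I claim $Z$ has $p$-rank $0$. Indeed, by Lemma \ref{forwardHW}(2) (equivalently Proposition \ref{prop:yui}) the entries of $H_Z$ lie among the coefficients $b_{p-1}, b_{p-2}, b_{2p-1}, b_{2p-2}$ of $(x^6-1)^{(p-1)/2}$. This power involves only exponents divisible by $6$, while for $p \equiv 5 \bmod 6$ one computes $p-1 \equiv 4$, $p-2 \equiv 3$, $2p-1 \equiv 3$, and $2p-2 \equiv 2 \pmod 6$; hence every entry of $H_Z$ vanishes and $f_Z = 0$. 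Since $\dim \Jac(Z) = 2$, the $p$-rank $0$ condition is exactly supersingularity, so $P_\pi \simeq \Jac(Z)$ will have $p$-rank $0$.

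Next I would form the Kummer surface $K$ of $\Jac(Z)$ via \eqref{Ekummer}; because only $d_0 = -1$ and $d_6 = 1$ are nonzero, the defining quartic $\kappa$ collapses to a very sparse form (for instance $K_0 = 4(X_2^2 - X_1 X_3)^2$). By Section \ref{Sexplicitfiber}, for a general plane $V: aX_1 + bX_2 + cX_3 + X_4 = 0$ avoiding the node $(0:0:0:1)$, the section $X = V \cap K$ is a smooth plane quartic carrying an unramified double cover $\pi: Y \to X$ with $P_\pi \simeq \Jac(Z)$, so automatically $f' = 0$. It then remains to choose $(a,b,c)$ so that $X$ is also ordinary, i.e. $\det(H_X) \neq 0$, where $H_X$ is computed by Proposition \ref{prop:Hasse-Witt} and, by Lemma \ref{LhomHX}, has entries homogeneous of degree $2(p-1)$ in $(a,b,c,d_0,\dots,d_6)$.

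The heart of the argument, and the step I expect to be the main obstacle, is to show that $\det(H_X)$, specialized at $D(x) = x^6-1$ and viewed as a polynomial in $(a,b,c)$, is not identically zero \emph{uniformly in $p$}. My plan is to exhibit a single judiciously chosen plane $V$ and verify $\det(H_X) \neq 0$ directly from Proposition \ref{prop:Hasse-Witt}, tracking which coefficients $c_{i_1,i_2,i_3,i_4}$ of $(v\kappa)^{p-1}$ survive. The sparse form of $\kappa$ and the $\mu_6$-grading of $K$ inherited from the automorphism $x \mapsto \zeta_6 x$ of $Z$ (which scales $(X_1,X_2,X_3,X_4)$ by $(1,\zeta_6,\zeta_6^2,\zeta_6^4)$) force many of these coefficients to vanish modulo $p$, which is precisely where the congruence $p \equiv 5 \bmod 6$ enters and makes the computation tractable. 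Establishing this one nonvanishing is the crux; since ordinariness and smoothness are both open conditions on $V \in \mathbb{P}^3$ and $\mathbb{P}^3$ is irreducible, it then follows that a dense set of planes $V$ yields smooth ordinary $X$, proving $\mathcal{R}_3^{(3,0)} \neq \emptyset$.

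Finally I would upgrade non-emptiness to the dimension statement. Smoothness and ordinariness of $X = V \cap K_Z$ are open conditions on the pair $(Z,V)$, where $Z$ ranges over the one-dimensional $p$-rank $0$ locus $\mathcal{M}_2^0 \subset \mathcal{M}_2$ and $V$ over $\mathbb{P}^3$; this parameter space has dimension $1 + 3 = 4$, and the example above shows the good locus is a non-empty open subset, hence of dimension $4$. The induced map to $\mathcal{R}_3^{(3,0)}$ is generically finite (the pair $(Z,V)$ is recovered from the cover up to the finite automorphism group of $K$), so $\dim \mathcal{R}_3^{(3,0)} \geq 4$. Conversely, $\mathcal{R}_3^{(3,0)} \subseteq Pr_3^{-1}(\mathcal{A}_2^0)$, where $\mathcal{A}_2^0 \subset \mathcal{A}_2$ is the one-dimensional supersingular locus; since the Prym map has three-dimensional fibers by Verra \cite{Verra}, the right-hand side has dimension at most $4$. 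This matches the purity lower bound of \cite[Proposition 5.2]{OzmanPries2016} and yields $\dim \mathcal{R}_3^{(3,0)} = 4$.
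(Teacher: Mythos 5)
Your overall strategy is exactly the paper's: take $Z\colon z^2=x^6-1$, which you correctly verify is supersingular for $p\equiv 5\bmod 6$ (your direct computation via the coefficients of $(x^6-1)^{(p-1)/2}$, whose exponents are all divisible by $6$ while $p-1,p-2,2p-1,2p-2\equiv 4,3,3,2 \bmod 6$, is a nice self-contained substitute for the paper's citation of Ibukiyama--Katsura--Oort), intersect its Kummer surface with a plane $v=aX_1+bX_2+cX_3+X_4$, compute $H_X$ by Proposition~\ref{prop:Hasse-Witt}, and conclude by openness of smoothness and ordinariness once $\det(H_X)$ is known to be a nonzero polynomial in $(a,b,c)$. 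Your dimension argument (purity lower bound from \cite[Proposition 5.2]{OzmanPries2016}, upper bound from $\mathcal{R}_3^{(3,0)}\subseteq Pr_3^{-1}(\mathcal{A}_2^0)$ with $\dim\mathcal{A}_2^0=1$ and Verra's $3$-dimensional fibers) is sound and is in fact the reasoning the paper spells out in Proposition~\ref{P20in30}.

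However, there is a genuine gap at what you yourself flag as the crux: you never establish that $\det(H_X)$ is nonzero, uniformly in $p$. Your proposed tactic --- fix one ``judiciously chosen'' numerical plane $V$ and check $\det(H_X)\neq 0$ directly --- is problematic as stated, because the entries of $H_X$ are sums of multinomial coefficients of $(v\kappa)^{p-1}$ that vary with $p$, so a fixed numerical $(a,b,c)$ could a priori be a root of $\det(H_X)$ for infinitely many $p$; any verification ``tracking which coefficients survive'' would have to be a uniform-in-$p$ computation, i.e.\ precisely the missing work. The paper's Lemma~\ref{lem:Pexist5mod6} avoids this by keeping $b$ as an \emph{indeterminate} and proving $\deg_b\det(H_X)=4(p-1)$ exactly: Claims~1--2 bound the diagonal and off-antidiagonal contributions (Claim~2 is where $p\equiv 5\bmod 6$ enters, via the unsolvability of $3(m_2-m_3)=\pm(p-1)$ when $3\nmid p-1$ --- this is the rigorous form of your $\mu_6$-grading observation, which is correct as far as it goes), while Claims~3--4 carry out the \emph{positive} part your sketch lacks: an explicit evaluation of the leading antidiagonal coefficients as $-4^{2p-2}$ and $(p-1)(-4)^{p-1}$, using the multinomial identity $\sum_{m_1+m_2+m_3+m_4=p-1}\frac{(p-1)!}{m_1!m_2!m_3!m_4!}=4^{p-1}\not\equiv 0\bmod p$. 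A weight/vanishing argument alone can only kill terms; to conclude you must exhibit one surviving coefficient and show it is nonzero modulo $p$, and that computation is the heart of the proof you have deferred rather than supplied.
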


\begin{proof}
Consider the genus $2$ curve $Z: z^2=x^6-1$; 
it is superspecial, and thus has $p$-rank $0$, when $p \equiv 5 \bmod 6$ \cite[Proposition 1.11]{IKO}.
The Kummer surface $K$ in ${\mathbb P}^3$ is the zero locus of
\begin{eqnarray} 
\kappa(X_1,X_2,X_3,X_4)=K_2X_4^2+K_1X_4+K_0
\end{eqnarray}
with
\begin{eqnarray*}
K_2&=&X_2^2-4X_1X_3\\
K_1&=&4X_1^3-4X_3^3\\
K_0&=& 4 X_1^2X_3^2- 8 X_1X_2^2X_3+4 X_2^4.
\end{eqnarray*}

So 
\[\kappa=X_2^2X_4^2 - 4 X_1X_3X_4^2 + 4X_1^3X_4 - 4 X_3^3X_4 + 4 X_1^2X_3^2 - 8X_1X_2^2X_3 + 4 X_2^4.\]

Let $v=aX_1 + bX_2+cX_3 + X_4$ (so $d=1$).
Let $c_{i_1,i_2,i_3,i_4}$ be the coefficient of $X_1^{i_1}X_2^{i_2}X_3^{i_3}X_4^{i_4}$ in 
$(\kappa v)^{p-1}$.
By Proposition~\ref{prop:Hasse-Witt},
the Hasse-Witt matrix $H_X$ of $X=V \cap K$ is 
{\tiny {\begin{equation*}
\begin{pmatrix}c_{2p-2,p-1,p-1,p-1} - a^p c_{p-2,p-1,p-1,2p-1}& c_{p-2,2p-1,p-1,p-1} - b^p c_{p-2,p-1,p-1,2p-1} &
c_{p-2,p-1,2p-1,p-1} - c^p c_{p-2,p-1,p-1,2p-1}  \\
c_{2p-1,p-2,p-1,p-1} -a^p c_{p-1,p-2,p-1,2p-1} & c_{p-1,2p-2,p-1,p-1} -b^p c_{p-1,p-2,p-1,2p-1} &c_{p-1,p-2,2p-1,p-1} - c^p c_{p-1,p-2,p-1,2p-1} \\
c_{2p-1,p-1,p-2,p-1} - a^p c_{p-1,p-1,p-2,2p-1} & c_{p-1,2p-1,p-2,p-1} - b^p c_{p-1,p-1,p-2,2p-1} & c_{p-1,p-1,2p-2,p-1} - c^p c_{p-1,p-1,p-2,2p-1} \\
\end{pmatrix}.
\end{equation*}}
}

By Lemma~\ref{lem:Pexist5mod6} (below), 
when $p\equiv 5\bmod 6$, then the determinant of $H_X$ has degree $4(p-1)$ when 
considered as a polynomial in $b$.
In particular, ${\rm det}(H_X)$ is a non-zero polynomial in $a,b,c$.
The condition that $X$ is singular is a non-zero polynomial condition in $a,b,c$.
Therefore, there exists a triple $(a,b,c)\in \overline{\mathbb{F}}_p^3$ such that $X$ is smooth and ${\rm det}(H_X) \not = 0$.
This implies that $X$ is ordinary, with $p$-rank $3$, and the unramified double cover $\pi:Y \to X$ has the property 
that $P_{\pi} \simeq {\rm Jac}(Z)$ has $p$-rank $0$.
Thus $R_3^{(3,0)}$ is non-empty. 
The dimension result follows from \cite[Proposition 5.2]{OzmanPries2016}.
\end{proof}

We remark that a result similar to Proposition~\ref{Pexist5mod6} may be true when 
$p \equiv 5,7 \bmod 8$ with $Z:z^2=x^5-x$ or when $p \equiv 2,3,4 \bmod 5$ with $Z:z^2=x^5-1$. 

The next lemma provides the cornerstone of the proof of Proposition~\ref{Pexist5mod6}.

\begin{lemma}\label{lem:Pexist5mod6}
Let $p\equiv 5\bmod 6$ and let $X$ be as in the proof of Proposition~\ref{Pexist5mod6}.
When considered as a polynomial in $b$, the determinant of $H_X$ has degree $4(p-1)$.
\end{lemma}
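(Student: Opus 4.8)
The plan is to compute the coefficient of the top-degree term $b^{4(p-1)}$ in $\det(H_X)$ explicitly and verify that it is nonzero whenever $p\equiv 5\bmod 6$. By Lemma~\ref{LhomHX}, each entry of $H_X$ is homogeneous of degree $2(p-1)$ in $a,b,c$ (after setting $d=1$), so $\det(H_X)$ is homogeneous of degree $6(p-1)$ and can have degree at most $6(p-1)$ in $b$; the claim is that the contribution in $b$ caps at $4(p-1)$ and that this coefficient survives. First I would isolate, for each of the nine entries of $H_X$, the coefficient of the highest possible power of $b$. Since $H_X = U - C\,[a^p,b^p,c^p]$, the $b$-dependence splits into the pure coefficients $c_{i_1,i_2,i_3,i_4}$ coming from $(\kappa v)^{p-1}$ together with the explicit $b^p$ appearing in the rank-one correction term $C[a^p,b^p,c^p]$.

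The key computational step is to extract, from $(\kappa v)^{p-1}$, the coefficients $c_{i_1,i_2,i_3,i_4}$ as polynomials in $a,b,c$ and to read off their $b$-degree. For the curve $Z:z^2=x^6-1$ the Kummer equation $\kappa$ has only a handful of monomials, and $v=aX_1+bX_2+cX_3+X_4$, so the monomials of $(\kappa v)^{p-1}$ whose $X_4$-exponent is fixed (either $p-1$ or $2p-1$) and whose $X_1,X_2,X_3$-exponents match the prescribed indices can be enumerated combinatorially. Here the congruence $p\equiv 5\bmod 6$ enters: it controls which powers of $x^6$ (equivalently, which monomials in $X_1,X_2,X_3$ built from $X_1^3,X_2^4,X_3^3,X_1X_3,X_1X_2^2X_3$) can combine to hit the required total degrees, because exponents must be realizable as nonnegative integer combinations subject to the multinomial constraints coming from raising to the power $p-1$. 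The plan is to show that the maximal $b$-power attainable in any entry of $U$ is controlled by how many factors of $X_2$ (hence $b$) can be assembled, and that the diagonal-type product in the determinant expansion that realizes $b^{4(p-1)}$ has a nonvanishing multinomial coefficient precisely because $p\equiv 5\bmod 6$ makes the relevant congruence conditions solvable while forbidding cancellation.

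Concretely, I would argue that the top $b$-contribution to $\det(H_X)$ comes from a single product of three entries (one per row and column), identify that product, and compute its coefficient as a product of explicit multinomial coefficients reduced modulo $p$. The main obstacle I expect is precisely this last nonvanishing check: one must show that the relevant multinomial coefficient is a unit in $\overline{\mathbb{F}}_p$, which by Lucas' theorem amounts to a base-$p$ digit condition on the exponents, and that no other term of the same $b$-degree cancels it. Establishing that the degree is exactly $4(p-1)$ (not higher) requires showing that any putative term of $b$-degree exceeding $4(p-1)$ forces an impossible balance among the $X_1,X_3$ exponents given the fixed $X_4$-exponents; this counting, together with the modular nonvanishing of the surviving coefficient, is the technical heart of the argument. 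Once both the upper bound and the nonvanishing of the $b^{4(p-1)}$ coefficient are in hand, the degree statement follows immediately.
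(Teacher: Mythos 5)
Your outline reproduces the architecture of the paper's proof---entry-by-entry bounds on the $b$-degree read off from the monomial structure of $\kappa^{p-1}v^{p-1}$, the congruence $p\equiv 5\bmod 6$ entering through solvability of exponent equations, a single dominant product of three entries, and a multinomial computation mod $p$---but the two load-bearing steps are deferred, and the one concrete tool you commit to for the nonvanishing would not work. The coefficient of $b^{p-1}$ in the decisive entries (these turn out to be the antidiagonal ones, $c_{p-2,p-1,2p-1,p-1}$ and $c_{2p-1,p-1,p-2,p-1}$) is not a single multinomial coefficient but a sum over a one-parameter family: matching the $X_4$-exponent in $(-4X_1X_3X_4^2)^{m_1}(4X_1^3X_4)^{m_2}(-4X_3^3X_4)^{m_3}(4X_1^2X_3^2)^{m_4}$ forces $m_1=m_4$, after which $m_2,m_3$ are determined by the free parameter $m_1$. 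Individual multinomial coefficients $\frac{(p-1)!}{m_1!m_2!m_3!m_4!}$ can certainly vanish mod $p$, and Lucas' theorem applied to any one of them says nothing about the sum---precisely the cancellation threat you flag as the ``main obstacle'' but leave unresolved. The paper's resolution is not Lucas but a sign observation: writing $p=6k+5$, the quantity $m_1+m_3$ is constant along the family ($4k+3$, resp.\ $2k+1$), so every term carries the same sign, the sum becomes $\pm 4^{p-1}$ times a sum of positive multinomial coefficients, and it is evaluated to the nonzero value $-4^{2p-2}$. Without this observation (or a substitute for it) your plan stalls exactly where you predicted it would.

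You also leave unidentified the product realizing $b^{4(p-1)}$, and its shape is not the naive one: the degree splits as $(p-1)+(2p-2)+(p-1)$ along the antidiagonal, with the middle entry contributing through its correction term $b^p c_{p-1,p-2,p-1,2p-1}$, whose $b^{p-2}$-coefficient is $(p-1)(-4)^{p-1}\neq 0$. The congruence then plays a double role your sketch only gestures at: $3\mid p-2$ and $3\mid 2p-1$ make the antidiagonal exponent equations solvable, while $3\nmid p-1$ kills the diagonal coefficients $c_{k(p-1),p-1,\ell(p-1),p-1}$ for $(k,\ell)\in\{(1,2),(2,1)\}$ (the exponent equations force $3(m_2-m_3)=\pm(p-1)$), capping the diagonal product at degree $4p-6$; the remaining four determinant terms are capped at $4p-5$ using the fact that $\kappa^{p-1}$ has degree $2p-2$ in $X_4$, so each $c_{i_1,p-1,i_3,2p-1}$ has $b$-degree at most $p-2$. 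Finally, a small misreading: Lemma~\ref{LhomHX} gives homogeneity of degree $2(p-1)$ in $a,b,c,d_0,\dots,d_6$ jointly; for the fixed curve $z^2=x^6-1$ the entries are not homogeneous in $a,b,c$ alone (indeed the $b$-degree of $\det(H_X)$ is $4(p-1)$, not $6(p-1)$), though since you used this only as a crude upper bound, that slip is harmless.
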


\begin{proof}
When considered as a polynomial in $b$, the coefficient $c_{i_1,i_2,i_3,i_4}$ of $X_1^{i_1}X_2^{i_2}X_3^{i_3}X_4^{i_4}$ in 
$(\kappa v)^{p-1}$ has degree at most $p-1$. 
Any occurrence of $b$ comes from the term $bX_2$ in $v$, so $c_{i_1,i_2,i_3,i_4}$ has degree at most $i_2$ in $b$.

Note that $\kappa$ has degree $2$ in $X_4$, so $\kappa^{p-1}$ has degree $2p-2$ in $X_4$.
Any monomial in $\kappa^{p-1}$ not divisible by $X_2$ arises as the product 
\begin{equation}\label{Emformula}
(-4X_1X_3X_4^2)^{m_1}(4X_1^3X_4)^{m_2}(-4X_3^3X_4)^{m_3}(4X_1^2X_3^2)^{m_4}
\end{equation} for some $m_1,m_2,m_3,m_4\in\mathbb{Z}^{\geq 0}$ with $m_1+m_2+m_3+m_4=p-1$. 

\smallskip

\paragraph{\textbf{Claim 1:}} When considered as a polynomial in $b$, any term of the form $c_{i_1,p-1,i_3,2p-1}$ has degree at most $p-2$.

\smallskip

\paragraph{\bf{Proof of Claim 1:} } Any occurrence of $b^{p-1}$ in $(\kappa v)^{p-1}$ comes from $\kappa^{p-1}(bX_2)^{p-1}$. The coefficient of $X_1^{i_1}X_3^{i_3}X_4^{2p-1}$ in 
$\kappa^{p-1}$ is zero because $\kappa^{p-1}$ has degree $2p-2$ in $X_4$. $\Box$

\smallskip

By Claim 1, in the middle column of $H_X$, the top and bottom entries,
\[c_{p-2,2p-1,p-1,p-1} - b^p c_{p-2,p-1,p-1,2p-1} {\rm \ and \ } c_{p-1,2p-1,p-2,p-1} - b^p c_{p-1,p-1,p-2,2p-1},\] have degrees at most $2p-2$ in $b$. We consider the six terms in the expansion of ${\rm det}(H_X)$. The four terms that do not contain the central coefficient of $H_X$ have degrees at most $2p-2+p-1+p-2=4p-5$. It remains to consider the product of the diagonal coefficients, and the product of the antidiagonal coefficients. We show that the former has degree at most $4p-6$ and the latter has degree $4p-4$ as polynomials in $b$.

\smallskip

\paragraph{\textbf{Claim 2:}} When considered as a polynomial in $b$, each of the two terms $c_{k(p-1),p-1,\ell(p-1),p-1}$ for $(k,\ell)\in \{(1,2),(2,1)\}$ has degree at most $p-2$.

\smallskip

\paragraph{\bf{Proof of Claim 2:} } 
We must show that the coefficient of $X_1^{k(p-1)}X_3^{\ell(p-1)}X_4^{p-1}$ in 
$\kappa^{p-1}$ is zero. Since $X_2$ does not divide this monomial, it appears as a product as in \eqref{Emformula}.
In order to attain the desired powers of $X_1$ and $X_3$, we must have 
\[m_1+3m_2+2m_4=k(p-1)\]
and 
\[m_1+3m_3+2m_4=\ell(p-1).\]
Subtracting the two equations gives $3(m_2-m_3)=\pm (p-1)$. But $3\nmid (p-1)$ since $p\equiv 5\bmod 6$. So the coefficient of $X_1^{k(p-1)}X_3^{\ell(p-1)}X_4^{p-1}$ in 
$\kappa^{p-1}$ is zero, as required.$\Box$

\smallskip

Combining Claims 1 and 2 shows that the product of the diagonal entries of $H_X$ has degree at most $p-2+2p-2+p-2=4p-6$ in $b$. Finally, we show that the product of the antidiagonal entries has degree $4p-4$ when considered as a polynomial in $b$.

\smallskip

\paragraph{\textbf{Claim 3:}} When considered as polynomials in $b$, the following terms have degree $p-1$:
\[(1) \ c_{p-2,p-1,2p-1,p-1} {\rm \ and \ } (2) \ c_{2p-1,p-1,p-2,p-1}.\]

\smallskip

\paragraph{\bf{Proof of Claim 3:} } 
For (1), 
any occurrence of $b^{p-1}$ in $(\kappa v)^{p-1}$ comes from $\kappa^{p-1}(bX_2)^{p-1}$. 
Hence we must show that the coefficient of $X_1^{p-2}X_3^{2p-1}X_4^{p-1}$ in 
$\kappa^{p-1}$ is nonzero. Since $X_2$ does not divide this monomial, it arises as a product as in \eqref{Emformula}.
In order to attain the desired power of $X_4$, we must have 
\[2m_1+m_2+m_3=p-1,\]
whereby $m_1=m_4$.
In order to attain the desired powers of $X_1$ and $X_3$, we must have
\begin{eqnarray*}
m_1+3m_2+2m_4=3(m_1+m_2)&=&p-2\\
\textrm{and} \\
m_1+3m_3+2m_4=3(m_1+m_3)&=&2p-1.
\end{eqnarray*}
So $m_1$ determines $m_2,m_3,m_4$. Write $p=6k+5$ for some integer $k$, so $m_2=2k+1-m_1$ and $m_3=4k+3-m_1$.
The coefficient of $b^{p-1}X_1^{p-2}X_2^{p-1}X_3^{2p-1}X_4^{p-1}$ in $c_{p-2,p-1,2p-1,p-1}$ is
\begin{align*}
&\sum_{m_1+m_2+m_3+m_4=p-1} \frac{(p-1)!}{m_1!m_2!m_3!m_4!}(-4)^{m_1}4^{2k+1-m_1} (-4)^{4k+3-m_1}4^{m_1} \\
&=(-1)^{4k+3} 4^{p-1}\sum_{m_1+m_2+m_3+m_4=p-1}\frac{(p-1)!}{m_1!m_2!m_3!m_4!} .\end{align*}

Note that
\begin{align*}
\sum_{m_1+m_2+m_3+m_4=p-1}\frac{(p-1)!}{m_1!m_2!m_3!m_4!}=4^{p-1}.
\end{align*}

Therefore, the coefficient of $b^{p-1}X_1^{p-2}X_2^{p-1}X_3^{2p-1}X_4^{p-1}$ is 
the nonzero number \[(-1)^{4k+3}4^{p-1}4^{p-1}=-4^{2p-2}.\]

For (2), consider the coefficient of $b^{p-1}$ in $c_{2p-1,p-1,p-2,p-1}$.  By the same strategy as above:
\begin{align*}
m_1=m_4;\\
  3m_1+3m_2=2p-1=12k+9;\\
   3m_1+3m_3= p-2=6k+3.
\end{align*}
So, the coefficient of $b^{p-1}X_1^{2p-1}X_2^{p-1}X_3^{p-2}X_4^{p-1}$ in $c_{2p-1,p-1,p-2,p-1}$ is the nonzero number

\begin{align*}
&\sum_{m_1+m_2+m_3+m_4=p-1} \frac{(p-1)!}{m_1!m_2!m_3!m_4!}(-4)^{m_1}4^{4k+3-m_1} (-4)^{2k+1-m_1}4^{m_1} \\
&=(-1)^{2k+1} 4^{p-1}\sum_{m_1+m_2+m_3+m_4=p-1}\frac{(p-1)!}{m_1!m_2!m_3!m_4!}=-4^{2p-2}.  \Box
\end{align*}

\smallskip

\paragraph{\textbf{Claim 4:}} When considered as a polynomial in $b$, the term $c_{p-1,p-2,p-1,2p-1}$ has degree $p-2$.

\smallskip

\paragraph{\bf{Proof of Claim 4:} } Consider the coefficient of $b^{p-2}$ in $c_{p-1,p-2,p-1,2p-1}$. 
Since $\kappa^{p-1}$ has degree $2p-2$ in $X_4$, any occurrence of $b^{p-2}X_1^{p-1}X_2^{p-2}X_3^{p-1}X_4^{2p-1}$ in $(\kappa v)^{p-1}$ comes from choosing the monomial $bX_2$ in $p-2$ factors $v$ of $v^{p-1}$, and $X_4$ in the remaining one.
 There are $p-1$ ways of doing so. 

 Now we compute the coefficient of $X_1^{p-1}X_3^{p-1}X_4^{2p-2}$ in $\kappa^{p-1}$. A monomial divisible by $X_2$ cannot be chosen in a factor $\kappa$ of $\kappa^{p-1}$. Therefore, to obtain the exponent $2p-2$ of $X_4$, we need to pick the monomial $ - 4 X_1X_3X_4^2$ in each factor $\kappa$ of $\kappa^{p-1}$. Hence, the coefficient of $b^{p-2}X_1^{p-1}X_2^{p-2}X_3^{p-1}X_4^{2p-1}$ in $c_{p-1,p-2,p-1,2p-1}$ is $(p-1)(-4)^{p-1}$, which is not zero.
$\Box$.

By the claims, the leading power of $b$ arises in the product of the antidiagonal entries and has degree $p-1+p+p-2+p-1=4p-4$.
\end{proof}

\subsection{The condition that $X$ is non-ordinary} \label{Scommalgmore}

In this subsection, $Z$ is an arbitrary smooth curve of genus $2$ and 
$X = K \cap V$ with Hasse-Witt matrix $H_X$ as in Section \ref{SCommAlg}.

\subsubsection{The condition that $X$ is non-ordinary}

The curve $X$ is non-ordinary if and only if ${\rm det}(H_X)$ is non-zero.

\begin{proposition} \label{PhomogX}
Setting $d=1$, then ${\rm det}(H_X)$ is non-zero and homogeneous of degree $6(p-1)$ in 
$a,b,c, d_0, \ldots, d_6$.
\end{proposition}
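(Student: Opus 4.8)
The plan is to treat the two assertions separately, since the homogeneity is formal while the non-vanishing carries the real content. For the homogeneity, by Lemma~\ref{LhomHX} once we set $d=1$ each of the nine entries of the $3\times 3$ matrix $H_X$ is homogeneous of degree $2(p-1)$ in the variables $a,b,c,d_0,\ldots,d_6$. Expanding $\det(H_X)$ as the alternating sum of the six products of three entries, one from each row and column, every such product is homogeneous of degree $3\cdot 2(p-1)=6(p-1)$, and a sum of forms of this common degree is again homogeneous of degree $6(p-1)$ (or identically zero). Hence it remains only to show that $\det(H_X)$ is not the zero polynomial in $a,b,c,d_0,\ldots,d_6$.

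For the non-vanishing, the plan is to exhibit a single specialization $(a,b,c,d_0,\ldots,d_6)\in\overline{\mathbb{F}}_p^{10}$ at which $D(x)=\sum_i d_i x^i$ is separable of degree $5$ or $6$ (so that $Z\colon z^2=D(x)$ is a smooth genus $2$ curve), the section $X=V\cap K$ is a smooth plane quartic, and $X$ is ordinary. By Proposition~\ref{prop:prank}, $X$ is ordinary exactly when $\det(H_X)\ne 0$ (since raising to the $p$th power preserves invertibility, the rank of $H_XH_X^{(p)}H_X^{(p^2)}$ is $3$ iff $H_X$ is invertible), so any such specialization proves that $\det(H_X)\not\equiv 0$. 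For $p\equiv 5\bmod 6$ such a specialization is already available from Lemma~\ref{lem:Pexist5mod6}, taking $Z\colon z^2=x^6-1$: there $\det(H_X)$ is shown to be a nonzero polynomial in $b$. The work is therefore to produce a suitable specialization uniformly for every odd $p$.

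For the general case I would argue on moduli. The ordinary (full $p$-rank) stratum and the non-hyperelliptic locus are both dense open in $\mathcal{M}_3$ for every $p$ (the hyperelliptic locus has dimension $5<6$, and the ordinary stratum is dense by the dimension count of Faber--Van der Geer \cite{FaberVdG2004}), so ordinary smooth plane quartics exist over $\overline{\mathbb{F}}_p$. On the other side, $\mathcal{R}_3$ is irreducible, the forgetful map $\mathcal{R}_3\to\mathcal{M}_3$ is surjective, and the Prym map $Pr_3\colon\mathcal{R}_3\to\mathcal{A}_2$ is dominant; moreover the locus of principally polarized surfaces that are Jacobians of smooth genus $2$ curves is dense open in $\mathcal{A}_2$. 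Consequently the locus in $\mathcal{R}_3$ where $X$ is ordinary and the locus where $P_\pi\simeq\Jac(Z)$ for a smooth genus $2$ curve $Z$ are both nonempty open, hence meet. A point of the intersection gives, via the explicit description of Section~\ref{Sexplicitfiber} (Verra's fibres of the Prym map together with Bruin's recovery of a model $V\cap K$), an ordinary smooth plane quartic realized as $V\cap K$ for the Kummer surface $K$ of a genuine genus $2$ curve $Z$; after normalizing the plane so that $d=1$, the corresponding parameter values witness $\det(H_X)\ne 0$.

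I expect the non-vanishing step to be the main obstacle: one must guarantee that the Kummer-section family $\{V\cap K_Z\}$ actually meets the ordinary locus over every odd prime, rather than being trapped inside the non-ordinary locus $\det(H_X)=0$. This is precisely what forces the appeal to the irreducibility of $\mathcal{R}_3$ and the density of the ordinary stratum in $\mathcal{M}_3$; by contrast, the homogeneity assertion is a purely formal consequence of Lemma~\ref{LhomHX}.
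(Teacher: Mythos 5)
Your proposal is correct and takes essentially the same approach as the paper: homogeneity is deduced formally from Lemma~\ref{LhomHX}, and non-vanishing via the same moduli-theoretic observation --- the paper argues that if $\det(H_X)$ were identically zero, the generic point of $\mathcal{R}_3$ would represent a cover with $X$ non-ordinary, which is false. Your version merely spells out the density and dominance facts (ordinary locus dense in $\mathcal{M}_3$, Prym map dominant, Verra's description of its fibers) that the paper's one-line theoretical argument leaves implicit.
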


\begin{proof}
By Lemma \ref{LhomHX}, each coefficient of the $3 \times 3$ matrix $H_X$ is homogeneous of 
degree $2(p-1)$ in these variables.  Thus it suffices to prove that ${\rm det}(H_X)$ is non-zero.  
We expect that this can be proven algebraically, but to avoid long computations we continue with the following 
theoretical argument.
If ${\rm det}(H_X)$ is identically $0$, then a generic point of $\mathcal{R}_3 = \Pi^{-1}(\mathcal{M}_3)$ 
would represent an unramified double cover $\pi: Y \to X$ such that $X$ is non-ordinary; this is false.
\end{proof}

We apply a fractional linear transformation to $x$ in order to reduce the number of variables defining $Z$, 
while preserving the degree of ${\rm det}(H_X)$ and its homogeneous property.
Without loss of generality, we can suppose that no Weierstrass point of $Z$ lies over $x = \infty$
and that 3 of the Weierstrass points are rational and 
lie over $x=0,1,-1$; 
(over a non-algebraically closed field, this may only be true after a finite extension).
Then,
\begin{eqnarray} \label{EsimpleZ}
Z:z^2 & = & D(x):= (x^3-x)(A_0 x^3+A x^2 + B x + C)\\
& = & A_0 x^6 + A x^5 + (B-A_0) x^4 + (C - A) x^3 - B x^2 - C x.
\end{eqnarray} 
Note that $A_0 \not = 0$ by the hypothesis at $\infty$ and $C \not = 0$ since $Z$ is smooth.

\subsubsection{The condition that $X$ is non-ordinary and $Z$ has $p$-rank $1$}

As in \eqref{EsimpleZ}, write $Z:z^2 =  D(x):= (x^3-x)(A_0 x^3+A x^2 + B x + C)$.
The curve $Z$ is not ordinary if and only if ${\rm det}(H_Z)=0$

\begin{lemma} \label{LnonordZ}
Then ${\rm det}(H_X)$ does not vanish identically under the polynomial condition ${\rm det}(H_Z)=0$, which is homogenous of degree $p-1$ in $A_0, A, B, C$. 
\end{lemma}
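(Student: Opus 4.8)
The plan is to treat the two assertions separately. The homogeneity statement is a short computation, while the non-vanishing of $\det(H_X)$ on the hypersurface $\{\det(H_Z)=0\}$ is the substantive point, and I would deduce it from an existing non-emptiness result by an argument in the spirit of the proof of Proposition~\ref{PhomogX}.

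For the homogeneity I would use the description of $H_Z$ from Lemma~\ref{forwardHW}(2): writing $D(x)^{(p-1)/2}=\sum_i b_i x^i$, we have $\det(H_Z)=b_{p-1}b_{2p-2}-b_{2p-1}b_{p-2}$. In the normalized form \eqref{EsimpleZ} the factor $x^3-x$ has constant coefficients, so $D(x)$ is homogeneous of degree $1$ in $A_0,A,B,C$; hence $D(x)^{(p-1)/2}$ is homogeneous of degree $(p-1)/2$, each $b_i$ is homogeneous of degree $(p-1)/2$, and $\det(H_Z)$ is homogeneous of degree $p-1$ in $A_0,A,B,C$. Since ordinary curves of the shape \eqref{EsimpleZ} exist, $\det(H_Z)$ is a non-zero polynomial, so it genuinely cuts out a hypersurface.

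For the non-vanishing I would argue by contradiction. If $\det(H_X)$ vanished identically wherever $\det(H_Z)=0$, then by the correspondence of Lemma~\ref{Lbruinsetup} and the construction of Section~\ref{Sexplicitfiber}, every unramified double cover $\pi:Y\to X$ of a smooth plane quartic with non-ordinary Prym $P_\pi\cong\Jac(Z)$ would have $X$ non-ordinary; equivalently, no such cover could have $f=3$ and $f'\le 1$. This contradicts \cite[Theorem 7.1]{OzmanPries2016}: taking $f'=g-2=1$ and $f=3$ (the hypothesis $f\ge 2$ when $p=3$ being satisfied), that result shows $\mathcal{R}_3^{(3,1)}$ is non-empty of dimension $2g-4+f=5$. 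A cover represented by a point of $\mathcal{R}_3^{(3,1)}$ has $X$ ordinary, so $\det(H_X)\ne 0$, while $P_\pi=\Jac(Z)$ has $p$-rank $1$, so $\det(H_Z)=0$; after a M\"obius change of coordinate bringing $Z$ to the form \eqref{EsimpleZ} and rescaling $V$ so that $d=1$, this yields a point of the parameter space with $\det(H_Z)=0$ and $\det(H_X)\ne 0$, as required.

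The step I expect to demand the most care is ensuring the point of $\mathcal{R}_3^{(3,1)}$ used above can be chosen with $X$ \emph{non-hyperelliptic}, since Verra's description and the normalization \eqref{EsimpleZ} apply only to smooth plane quartics. I would settle this by a dimension count. The locus $\mathcal{R}_3^{\mathrm{hyp}}$ of covers of hyperelliptic genus $3$ curves has dimension $2g-1=5$, and imposing non-ordinariness of $P_\pi$ cuts out a proper closed subset of it, so $\mathcal{R}_3^{(3,1)}\cap\mathcal{R}_3^{\mathrm{hyp}}$ has dimension at most $4$; since every component of $\mathcal{R}_3^{(3,1)}$ has dimension $5$, none lies in $\mathcal{R}_3^{\mathrm{hyp}}$ and its generic point is a smooth plane quartic. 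The single fact to verify is that a generic unramified double cover of a hyperelliptic genus $3$ curve has ordinary Prym, so that the non-ordinary condition is genuinely of codimension at least $1$ there; granting this, the remaining bookkeeping is routine.
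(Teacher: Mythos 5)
The homogeneity half of your argument coincides with the paper's: in the normalization \eqref{EsimpleZ} the coefficients of $D(x)$ are linear forms in $A_0,A,B,C$, so by Lemma \ref{forwardHW}(2) the entries of $H_Z$ are homogeneous of degree $(p-1)/2$ and $\det(H_Z)$ is homogeneous of degree $p-1$. The non-vanishing half, however, runs in the opposite direction from the paper's proof, and as written it has two gaps. The first is the one you flag and then defer: your exclusion of hyperelliptic points rests on the assertion that the generic unramified double cover of a hyperelliptic genus-$3$ curve has ordinary Prym, and this is not among the results you cite. It is true --- for hyperelliptic $X:y^2=f(x)$ an unramified double cover corresponds to a factorization $f=f_1f_2$ into even-degree factors, and by Mumford's description of hyperelliptic Pryms $P_\pi\cong\Jac(C_1)\times\Jac(C_2)$ with $C_i:y^2=f_i(x)$, so generic factors give an ordinary Prym --- but a proof that says ``granting this'' is incomplete at exactly the step you yourself identify as the delicate one. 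The second gap you do not notice: transporting a point of $\mathcal{R}_3^{(3,1)}$ into the parameter space requires $P_\pi$ to be the Jacobian of a \emph{smooth} genus-$2$ curve, whereas a principally polarized abelian surface of $p$-rank $1$ can instead be a product $E_1\times E_2$ of an ordinary and a supersingular elliptic curve with the product polarization. In that case Verra's $\deg\psi=2$ dichotomy fails, there is no curve $Z$, no normalization \eqref{EsimpleZ}, and no parameter point with $\det(H_Z)=0$ is produced. This is repairable by the same style of dimension count you use for hyperellipticity --- the locus of covers whose Prym is such a product has dimension at most $3+1=4<5$, since the fibers of the Prym map have dimension $3$ and the $p$-rank-$1$ product locus in $\mathcal{A}_2$ has dimension $1$ --- but it must be said.

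It is worth seeing why the paper's proof is a one-liner while your inverted route needs these patches. The paper pushes forward rather than pulling back: it reduces the claim to showing that $X$ is typically ordinary when $Z$ has $p$-rank $1$, i.e.\ it considers the $5$-dimensional family of plane sections $X=V\cap K$ as $Z$ ranges over $p$-rank-$1$ curves (two parameters) and $V$ over planes (three parameters), and notes that if $\det(H_X)$ vanished identically on $\{\det(H_Z)=0\}$ this family would map into $\bigcup_{f\le 2}\mathcal{R}_3^{(f,1)}$, contradicting the dimension counts $\dim\mathcal{R}_3^{(3,1)}=5$ versus $\dim\mathcal{R}_3^{(f,1)}=2+f\le 4$ from the same \cite[Theorem 7.1]{OzmanPries2016} you invoke. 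Since every member of that family is by construction a smooth plane quartic section of the Kummer surface of a smooth $Z$, neither hyperellipticity nor decomposability of the Prym can intervene; your route, starting from an abstract point of $\mathcal{R}_3^{(3,1)}$ and applying Verra's converse, is viable only once both obligations above are discharged.
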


\begin{proof}
Since $D(x)$ is linear in $A_0, A, B, C$, the entries of the Hasse-Witt matrix $H_Z$ are homogenous of degree $(p-1)/2$ so ${\rm det}(H_Z)$ 
is homogenous of degree $p-1$. 

For the non-vanishing claim, it suffices to show that $X$ is typically ordinary when $Z$ has $p$-rank $1$.  This follows from the fact that each component of $\mathcal{R}_3^{(3,1)}$ has dimension $5$, while each component of $\mathcal{R}_3^{(2,1)}$ has dimension $4$, which is a special case of \cite[Theorem 7.1]{OzmanPries2016}.
\end{proof}

For example, when $p=3$ and $(A_0,A,B,C,a,b,c)=(1, 0, 1, 2t, 0, 1, 1)$ with $t \in {\mathbb F}_9$ a root of $t^2-t-1$
then $f=1$ and $f'=1$ and the curve $X$ is smooth.

\subsubsection{The condition that $X$ is non-ordinary and $Z$ has $p$-rank $0$}

\begin{lemma} \label{LhomogZ}
The curve $Z$ is not ordinary under a polynomial condition on $A_0, A, B, C$ which is homogenous of degree
$p-1$.  The curve $Z$ has $p$-rank $0$ under 4 polynomial conditions on 
$A_0, A, B, C$ which are each homogenous of degree $(p+1)(p-1)/2$.  
\end{lemma}

\begin{proof}
The curve $Z$ has $p$-rank $0$ if and only if $H_Z H_Z^{(p)} =[0]$.  The entries of $H_Z^{(p)}$ are homogenous of 
degree $p(p-1)/2$, so the entries of $H_Z H_Z^{(p)}$ are homogenous of degree $(p+1)(p-1)/2$.
\end{proof}

\begin{proposition} \label{Phomogeneous}
Let $Z$ be a genus $2$ curve with equation $z^2=(x^3-x)(A_0 x^3+A x^2 + B x + C)$.
Let $V$ be a plane with equation $aX_1 + bX_2 + cX_3 + X_4$.
Let $X = V \cap K$ and consider the unramified double cover $\pi:Y \to X$ 
given by the restriction of $\phi:{\rm Jac}(Z) \to K$.
Then the condition that $X$ is non-ordinary and $Z$ has $p$-rank $0$ 
is given by the vanishing of 4 homogeneous polynomials of degree $(p+1)(p-1)/2$ in $A_0, A, B, C$
and the vanishing of one homogeneous polynomial ${\rm det}(H_X)$ of degree $6(p-1)$ in $a,b,c,A_0,A,B,C$.
\end{proposition}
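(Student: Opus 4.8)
The plan is to assemble the statement from the two structural results already in hand: Proposition~\ref{PhomogX} for the condition coming from $X$, and Lemma~\ref{LhomogZ} for the conditions coming from $Z$. The first observation is that the two requirements decouple into independent polynomial equations. By Proposition~\ref{prop:prank}, the $p$-rank of $X$ is the rank of $H_X H_X^{(p)} H_X^{(p^2)}$; since ${\rm det}(H_X H_X^{(p)} H_X^{(p^2)}) = {\rm det}(H_X)^{1+p+p^2}$, the curve $X$ is ordinary exactly when ${\rm det}(H_X) \neq 0$, hence non-ordinary exactly when ${\rm det}(H_X)=0$. Likewise the $p$-rank of $Z$ is the rank of $H_Z H_Z^{(p)}$, so $Z$ has $p$-rank $0$ exactly when all four entries of the $2 \times 2$ matrix $H_Z H_Z^{(p)}$ vanish. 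Thus the locus where $X$ is non-ordinary and $Z$ has $p$-rank $0$ is cut out by these five equations, and it remains only to record their degrees and homogeneity in the coordinates $a,b,c,A_0,A,B,C$.

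For the four conditions coming from $Z$, I would quote Lemma~\ref{LhomogZ} directly. With $D(x)=(x^3-x)(A_0x^3+Ax^2+Bx+C)$ as in \eqref{EsimpleZ}, each entry of $H_Z$ is homogeneous of degree $(p-1)/2$ in $A_0,A,B,C$, so each entry of $H_Z^{(p)}$ is homogeneous of degree $p(p-1)/2$, and each of the four entries of $H_Z H_Z^{(p)}$ is homogeneous of degree $(p-1)/2 + p(p-1)/2 = (p+1)(p-1)/2$. These are the four advertised polynomials.

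For the single condition coming from $X$, I would start from Proposition~\ref{PhomogX}, which gives that ${\rm det}(H_X)$ is a non-zero polynomial, homogeneous of degree $6(p-1)$ jointly in $a,b,c,d_0,\ldots,d_6$ (each variable of weight one). The one point needing care is the passage from the coefficients $d_0,\ldots,d_6$ to the normalized parameters $A_0,A,B,C$. Expanding \eqref{EsimpleZ} shows that each $d_i$ is a homogeneous linear form in $A_0,A,B,C$, while $a,b,c$ are untouched; substituting weight-one linear forms for the $d_i$ into a form of degree $6(p-1)$ again produces a form of degree $6(p-1)$, now in $a,b,c,A_0,A,B,C$. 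This yields the claimed homogeneity and degree of ${\rm det}(H_X)$.

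The last thing I would verify is that this substituted ${\rm det}(H_X)$ is not identically zero, so that it imposes a genuine condition; this is not automatic from Proposition~\ref{PhomogX}, since passing from the $d_i$ to $(A_0,A,B,C)$ is a specialization. However, the normalization in \eqref{EsimpleZ}---moving a Weierstrass point away from $x=\infty$ and sending three others to $0,1,-1$---is only a fractional linear change in $x$ together with a scaling, so the family \eqref{EsimpleZ} still dominates $\mathcal{M}_2$, and the pairs $(Z,V)$ it produces parametrize an open dense part of $\mathcal{R}_3$. The genericity argument of Proposition~\ref{PhomogX}, that a generic unramified double cover in $\mathcal{R}_3$ has ordinary $X$, therefore applies again in these coordinates. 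I expect this final genericity and degree-bookkeeping check to be the only subtle point; the mathematical content is already carried by Proposition~\ref{PhomogX} and Lemma~\ref{LhomogZ}, and the proposition is in essence their conjunction once the change of variables is accounted for.
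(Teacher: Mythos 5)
Your proof is correct and takes essentially the same route as the paper: the paper's own proof likewise just combines Proposition~\ref{PhomogX} with Lemma~\ref{LhomogZ}, observing that the coefficients $d_0,\ldots,d_6$ of $D(x)$ are linear in $A_0,A,B,C$ so that substitution preserves homogeneity and the degree $6(p-1)$. Your extra verification that $\det(H_X)$ remains not identically zero after specializing to the normalized family \eqref{EsimpleZ} (via dominance of that family over $\mathcal{M}_2$ and the genericity argument of Proposition~\ref{PhomogX}) is a sound refinement the paper leaves implicit, not a different approach.
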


\begin{proof}
The curve $X$ is non-ordinary if and only if ${\rm det}(H_X)$ vanishes.
By Proposition \ref{PhomogX}, ${\rm det}(H_X)$ is homogenous of degree $6(p-1)$ in $a,b,c$ and the coefficients of $D(x)$, 
which are linear in $A_0,A,B,C$.
The curve $Z$ has $p$-rank $0$ under the conditions in Lemma \ref{LhomogZ}.
\end{proof}

We expect that the answer to Question \ref{Qnatural} is yes for all odd $p$ when $g=3$, $f=2,3$, and $f'=0$.
We now rephrase the question in those cases to a question in commutative algebra.

\begin{question}
For all odd primes $p$,
is there a plane $V$ for which ${\rm det}(H_X)$ does not vanish identically under the 4 conditions when $H_Z  H_Z^{(p)} =[0]$?
Is there a plane $V$ for which ${\rm det}(H_X)$ does vanish for some $Z$ such that $H_Z  H_Z^{(p)} =[0]$?
\end{question}

The difficulty in showing that ${\rm det}(H_X)$ does not vanish identically when $Z$ has $p$-rank $0$ is that we do not know much about the variety of the ideal generated by the 4 polynomial conditions when $H_Z  H_Z^{(p)} =[0]$ or the behavior of the derivatives of ${\rm det}(H_X)$ with respect to the variables $a,b,c$.

\subsubsection{Example: when $p=3$}

Write $Z:z^2 = D(x)= (x^3-x)(A_0 x^3+A x^2 + B x + C)$.
Then 
\[H_Z=\left( \begin{array}{cc}
-B & A \\
-C & B- A_0
\end{array} \right).\]
The 4 entries of $H_Z H_Z^{(3)}$ are:
\[B^4 - AC^3, \ C(B^3 -C^2(B-A_0)), \ A((B-A_0)^3-BA^2), -CA^3+(B-A_0)^4.\]

Recall that $C \not = 0$ since $Z$ is smooth.  
If $Z$ has $3$-rank $0$ then if any of $A,B,B-A_0$ are zero then all of them are zero, which implies $A_0=0$, which contradicts the hypothesis at $\infty$.  
Thus $AB(B-A_0) \not = 0$ when $Z$ has $3$-rank $0$.
One can check that $H_Z H_Z^{(3)}=[0]$ if and only if
\[B^4 - AC^3=0, B^3 -C^2(B-A_0)=0.\]

Write 
$$(v\kappa)^2= \sum_{i_1+i_2+i_3+i_4=10} c_{i_1,i_2,i_3,i_4} X_1^{i_1}X_2^{i_2}X_3^{i_3}X_4^{i_4},$$ 
and assume that $d=1$.  Then the Hasse-Witt matrix $H_X$ is given by
\begin{eqnarray}\label{HW3} \left( \begin{array}{ccc}
c_{4,2,2,2}-a^3 c_{1,2,2,5} & c_{1,5,2,2}-b^3 c_{1,2,2,5}  & c_{1,2,5,2}- c^3 c_{1,2,2,5} \\
c_{5,1,2,2}-a^3 c_{2,1,2,5} & c_{2,4,2,2}-b^3 c_{2,1,2,5} & c_{2,1,5,2}-c^3 c_{2,1,2,5} \\
c_{5,2,1,2}-a^3 c_{2,2,1,5} & c_{2,5,1,2}-b^3 c_{2,2,1,5} & c_{2,2,4,2}- c^3 c_{2,2,1,5} \end{array} \right)
\end{eqnarray}
By Proposition~\ref{prop:prank}, the $3$-rank of $X$ is the stable rank of $H_X$, 
which is the rank of $H_X H_X^{(3)} H_X^{(3^2)}$.
The entries of $H_X$ are homogenous of degree $4$
and ${\rm det}(H_X)$ is homogenous of degree $12$ in $a, b, c, A_0, A, B, C$.

\subsubsection{Example: Genus $3$ curves having Pryms of $3$-rank $1$ when $p=3$}

In Example \ref{Equadp3}, we showed that $\mathcal{R}_3^{(1,1)}$ and $\mathcal{R}_3^{(0,1)}$ are non-empty when $p=3$, by finding curves $X$ of $3$-rank $1$ or $0$ having an unramified double cover $\pi:Y \to X$ such that $P_\pi$ has $3$-rank $1$.
Here we give a second proof of this using the methods of this section.

For $t,u \in k$ with $t \neq u$, consider the genus 2 curve 
\[Z_{t,u}: z^2=D(x):=x^6+x^3+1+x(x^3+1)(tx+u)=x^6+tx^5+ux^4+x^3+tx^2+ux+1.\]
One can check that $Z_{t,u}$ is smooth if $t \not = u$ and 
$Z_{t,u}$ has $3$-rank $1$ for $t \neq \pm u$.


%

\begin{example} Let $p=3$.
Consider the plane quartic $X=V \cap K$ where $K$ is the Kummer surface of ${\rm Jac}(Z_{t,u})$ and 
$V \subset {\mathbb P}^3$ is a plane.
Then $X$ has an unramified double cover $\pi:Y \to X$ such that $P_\pi \simeq Z_{t,u}$ has
$p$-rank $1$.
\begin{enumerate}
\item If $V: -X_2+X_3+X_4=0$ 
and $t=1$, $u=0$, then $X$ is smooth with $3$-rank $f=1$.
\item If $V: -X_1-X_2+X_4=0$ and $t=0$, $u=1$, then $X$ is smooth with $3$-rank $f=0$. 
\end{enumerate}
\end{example}

\subsection{The moduli space of genus $3$ curves having Pryms of $3$-rank $0$ when $p=3$}
\label{Sp3fiber}

In this section, we fix $p=3$. 
In Section \ref{Sfamily3}, we parametrize ${\mathcal M}_2^0$ 
(the $3$-rank $0$ stratum of $\mathcal{M}_2$)
by a $1$-parameter family of curves of genus $2$ and $3$-rank $0$. 
Let $\alpha$ be the name of this parameter.
In Section \ref{S20}, we then analyze ${\rm det}(H_X)$ as $V$ and $\alpha$ vary.
This allows us to prove some information about the locus of the parameter space 
where $X$ is non-ordinary.
This implicitly provides geometric information about $\mathcal{R}_3^{(2,0)}$.

\subsubsection{A family of genus $2$ curves with $3$-rank $0$ when $p=3$} \label{Sfamily3}

For $\alpha \in k - \{0,1,-1\}$,   define
$$Z_\alpha: \ z^2=A(x)B(x),$$ where
\begin{align*}
A(x)= x^3-\alpha x^2+\alpha x+(\alpha+1), \hbox{ and } \ B(x)=(x-\alpha)(x-(\alpha+1))(\alpha x+(\alpha +1)).
\end{align*}

The importance of the next lemma is that it shows that ${\rm Jac}(Z_\alpha)$ parametrizes the $3$-rank~$0$ stratum of 
$\mathcal{A}_2$, which is irreducible of dimension $1$ when $p=3$.

\begin{lemma} When $p=3$, then
the family $\{Z_\alpha\}_\alpha$ is a non-isotrivial family of smooth curves of genus $2$ and $3$-rank $0$.
\end{lemma}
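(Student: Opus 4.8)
The plan is to treat the three assertions---smoothness (with genus $2$), $3$-rank $0$, and non-isotriviality---separately, reducing the first two to short explicit computations in characteristic $3$ and isolating non-isotriviality as the one point requiring a genuine moduli argument. First I would show that $D(x):=A(x)B(x)$ is separable of degree $6$ for every $\alpha\in k-\{0,1,-1\}$, so that $Z_\alpha:z^2=D(x)$ is a smooth hyperelliptic curve of genus $2$. Since the leading coefficient of $D$ is $\alpha\neq0$ the degree is exactly $6$, so only squarefreeness is at issue. For $A$ this is immediate: in characteristic $3$ one has $A'(x)=\alpha(x+1)$, so $A$ fails to be separable only if $(x+1)\mid A$, i.e. only if $A(-1)=-\alpha=0$; thus $A$ is separable for $\alpha\neq0$. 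The three roots $\alpha$, $\alpha+1$, $-(\alpha+1)/\alpha$ of $B$ are pairwise distinct exactly when $\alpha\neq0,1,-1$ (each coincidence forces $\alpha=\pm1$, using $\alpha^2+\alpha+1=(\alpha-1)^2$ and $\alpha^2-\alpha+1=(\alpha+1)^2$ in characteristic $3$). Finally $A$ and $B$ are coprime, since evaluating $A$ at the roots of $B$ gives $A(\alpha)=(\alpha-1)^2$, $A(\alpha+1)=-(\alpha+1)^2$, and $\alpha^3A(-(\alpha+1)/\alpha)=-(\alpha^2-1)^2$, all nonzero for $\alpha\neq0,\pm1$. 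Hence $D$ is squarefree and $Z_\alpha$ is smooth of genus $2$.

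Next I would compute the $3$-rank via the Hasse--Witt matrix of Lemma~\ref{forwardHW}(2). Since $p=3$ we have $(p-1)/2=1$, so $D(x)^{(p-1)/2}=D(x)$ and $H_Z=\left(\begin{smallmatrix} d_2 & d_5 \\ d_1 & d_4\end{smallmatrix}\right)$, where $d_i$ is the coefficient of $x^i$ in $D(x)$. Expanding $A(x)B(x)$ and reducing modulo $3$ gives $d_5=1$, $d_4=-(\alpha+1)$, $d_2=\alpha^3+1=(\alpha+1)^3$, and $d_1=-\alpha^4-\alpha^3-\alpha-1=-(\alpha+1)^4$. Writing $w=\alpha+1$, this yields the clean form
\[
H_Z=\begin{pmatrix} w^3 & 1 \\ -w^4 & -w \end{pmatrix}.
\]
Already $\det H_Z=-w^4+w^4=0$, so $Z_\alpha$ is non-ordinary; and a direct multiplication shows $H_ZH_Z^{(3)}=0$, each entry being a difference $w^{a}-w^{a}$. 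By Proposition~\ref{prop:prank} the $3$-rank of $Z_\alpha$ equals $\rk(H_ZH_Z^{(3)})=0$.

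The main obstacle is non-isotriviality. The cleanest route is to show that the moduli map $\alpha\mapsto[\Jac(Z_\alpha)]$ is non-constant: an isotrivial family of smooth genus $2$ curves over the rational base $\mathbb{A}^1-\{0,\pm1\}$ would have all geometric fibers isomorphic to a single curve, forcing every absolute invariant of $Z_\alpha$ to be algebraic over $\mathbb{F}_3$, hence constant in $\alpha$. I would therefore compute the Igusa invariants $J_2,J_4,\dots,J_{10}$ of $Z_\alpha$ from the explicit sextic $D(x)$ and exhibit one absolute Igusa invariant that is a non-constant element of $\mathbb{F}_3(\alpha)$; equivalently, it suffices to produce two values $\alpha_1,\alpha_2$ with inequivalent invariant tuples, so that $Z_{\alpha_1}\not\simeq Z_{\alpha_2}$. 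This is where care is needed, since one must use the correct normalization of the Igusa invariants in characteristic $3$ and verify that the chosen absolute invariant has nonvanishing denominator along the family. Granting this, the family is non-isotrivial; combined with the $3$-rank computation it then dominates the irreducible, $1$-dimensional $3$-rank $0$ stratum of $\mathcal{A}_2$, which is the use to which the lemma is put.
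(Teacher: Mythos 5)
Your smoothness and $3$-rank arguments are correct and, in substance, the same as the paper's; indeed you give more detail than the paper on the smoothness step (the paper simply asserts that $A(x)B(x)$ has no repeated roots for $\alpha\in k-\{0,1,-1\}$, while you verify separability of $A$ and $B$ and the coprimality evaluations $A(\alpha)=(\alpha-1)^2$, $A(\alpha+1)=-(\alpha+1)^2$, $\alpha^3A(-(\alpha+1)/\alpha)=-(\alpha^2-1)^2$, all of which check out in characteristic $3$). Your matrix $H_Z=\left(\begin{smallmatrix} w^3 & 1\\ -w^4 & -w\end{smallmatrix}\right)$ with $w=\alpha+1$ is exactly the transpose of the Cartier--Manin matrix $M=\left(\begin{smallmatrix} (\alpha+1)^3 & -(\alpha+1)^4\\ 1 & -(\alpha+1)\end{smallmatrix}\right)$ that the paper obtains from Proposition~\ref{prop:yui}, and your identity $H_ZH_Z^{(3)}=0$ is transpose-equivalent to the paper's $M^{(3)}M=0$; by Proposition~\ref{prop:prank} either computation yields $3$-rank $0$, so this part is complete.

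The gap is non-isotriviality: you identify the correct strategy --- exhibit a non-constant absolute Igusa invariant of $Z_\alpha$, or equivalently two parameter values with inequivalent invariants --- but you never execute it, writing ``Granting this'' at the decisive moment. Since this is the only assertion of the lemma that is not settled by a short explicit computation, leaving it conditional means the lemma is not actually proved. The paper closes precisely this step by computing, via SageMath, $j_1=j_2=-(\alpha-1)^6/(\alpha(\alpha+1)^2)$ and $j_3=(\alpha-1)^2(\alpha^2-\alpha-1)^2/\alpha^2$, which are visibly non-constant rational functions of $\alpha$ whose denominators do not vanish on $k-\{0,1,-1\}$. Your caution about characteristic-$3$ normalizations is well placed (the classical Clebsch-style formulas have denominators divisible by small primes), and it is exactly why the paper leans on Igusa's characteristic-free invariants \cite{Igusa} as implemented in \cite{Sage}. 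To finish, you would need either to carry out that invariant computation, or to implement your own fallback and verify directly that two specific fibers $Z_{\alpha_1}$ and $Z_{\alpha_2}$ are non-isomorphic; as it stands, your text is a correct plan plus two completed lemmas, not a complete proof.
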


\begin{proof}
For $\alpha\in k - \{0,1,-1\}$, the polynomial $A(x)B(x)$ has no repeated roots, hence the curve $Z_\alpha$ is smooth and of genus $2$.
If $Z_\alpha$ is isomorphic to $Z_{\beta}$ for $\alpha, \beta\in k-\{0,1,-1\}$ then they have the same absolute Igusa invariants $j_1, j_2, j_3$ \cite{Igusa}. Using SageMath \cite{Sage}, we find that the absolute Igusa invariants of $Z_\alpha$ are: 
$$\begin{array}{c}
j_1=-\dfrac{(\alpha -1)^{6}}{\alpha(\alpha + 1)^2}, \\
j_2=-\dfrac{(\alpha -1)^6}{\alpha(\alpha + 1)^2  }, \\
j_3=\dfrac{(\alpha -1)^2 (\alpha^2 -\alpha -1)^2}{\alpha^2}.
\end{array}
$$
In particular, the absolute Igusa invariants are non-constant functions of $\alpha$, hence the family $\{Z_\alpha\}_\alpha$ is non-isotrivial.

By Proposition~\ref{prop:yui}, the Cartier-Manin matrix $M$ of $Z_{\alpha}$ is
\[\left( \begin{array}{ccc}
(\alpha+1)^3 & -(\alpha+1)^4  \\
1 & -(\alpha+1) 
\end{array}\right)\]
so the matrix $M^{(3)}M$ is 
\[\left( \begin{array}{ccc}
(\alpha+1)^9 & -(\alpha+1)^{12}  \\
1 & -(\alpha+1)^3 
\end{array} \right)\left( \begin{array}{ccc}
(\alpha+1)^3 & -(\alpha+1)^4  \\
1 & -(\alpha+1) 
\end{array}\right)= \left( \begin{array}{ccc}
0 & 0 \\
0 & 0 
\end{array} \right),\]
so $Z_\alpha$ has $3$-rank $f'=0$ by Proposition~\ref{prop:prank}. 
\end{proof}

\subsubsection{The locus of the parameter space where $X$ is non-ordinary when $p=3$}
\label{S20}

Now, we compute the equation of the Kummer surface $K_\alpha$ of $Z_\alpha$ and choose a plane $V=V_{a,b,c,d}$ to obtain 
the smooth plane quartic $X_V^\alpha=K_\alpha\cap V$.  
Our goal is to find information about the $3$-rank $f=f_V^\alpha$ of $X_V^\alpha$ as $V$ and $\alpha$ vary. 
To do this, we determine the Hasse-Witt matrix $H:=H_{X_V^\alpha}$ of $X_V^\alpha$ as in \eqref{HW3}. 

\begin{proposition} \label{Lplane3}
For a generic choice of plane $V$, the curve $X_V^\alpha$ is ordinary for all but finitely many $\alpha$ and 
has $3$-rank $2$ for at least one and at most finitely many $\alpha$.
\end{proposition}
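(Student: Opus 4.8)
The plan is to reduce both assertions to the single polynomial $P(a,b,c,\alpha):=\det(H)$, where $H=H_{X_V^\alpha}$ is the explicit $3\times 3$ Hasse-Witt matrix of \eqref{HW3}; its entries are polynomials in $a,b,c$ and, after substituting the coefficients of $A(x)B(x)$, in $\alpha$. Since $p=3$, Proposition~\ref{prop:prank} gives $\det\!\big(H H^{(3)}H^{(9)}\big)=P^{13}$, so $X_V^\alpha$ is \emph{ordinary} if and only if $P(a,b,c,\alpha)\neq 0$, and the non-ordinary locus is the zero scheme $N=\{P=0\}$ intersected with the open locus where $X_V^\alpha$ is smooth. \textbf{Part (a).} First I would check that $P\not\equiv 0$ as a polynomial in $(a,b,c,\alpha)$; it suffices to exhibit one smooth ordinary member $X_{V}^{\alpha}$, which a direct computation provides (equivalently, the pair realizing $\mathcal{R}_3^{(3,0)}$ in Proposition~\ref{R3f0_forwardmethod} arises as such an $X_V^\alpha$, by Verra's description in Section~\ref{SVerra} and Bruin's construction in Section~\ref{Sexplicitfiber}). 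Then $U:=\{V : P(V,\cdot)\not\equiv 0 \text{ in }\alpha\}$ is a dense open set of planes, and for $V\in U$ both $P(V,\cdot)$ and the (not identically vanishing) discriminant of $X_V^\alpha$ have only finitely many roots $\alpha$. Hence for generic $V$ the curve $X_V^\alpha$ is smooth and ordinary for all but finitely many $\alpha$, and in particular non-ordinary for at most finitely many $\alpha$.

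For the remaining assertion I would locate a component of $N$ along which the $3$-rank equals $2$ generically. A computation (as in the examples of Section~\ref{sec:forwards}) produces a plane $V_1\in U$ and a parameter $\alpha_1$ for which $X_{V_1}^{\alpha_1}$ is smooth of $3$-rank exactly $2$; here $P(V_1,\alpha_1)=0$ while $P(V_1,\cdot)\not\equiv 0$. Let $N^{(1)}$ be the irreducible component of $N$ through $(V_1,\alpha_1)$. As $N$ is a hypersurface in the smooth four-dimensional parameter space $\{(a,b,c,\alpha)\}$, the component $N^{(1)}$ has dimension $3$. Since the $p$-rank is lower semicontinuous, $\{3\text{-rank}\ge 2\}$ is open; and as $N^{(1)}\subseteq\{3\text{-rank}\le 2\}$, the subset $\{3\text{-rank}=2\}$ is open in $N^{(1)}$ and contains $(V_1,\alpha_1)$, hence is dense in $N^{(1)}$.

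It remains to make this uniform in $V$. Let $p$ denote the projection to the space of planes. Because $V_1\in U$, the open subset $N^{(1)}\cap p^{-1}(U)$ is nonempty, hence three-dimensional, and over $U$ the fibres of $p$ meet $N$ in finite sets; therefore $p|_{N^{(1)}}$ has finite general fibres and $p(N^{(1)})$ is dense in the three-dimensional space of planes. On the other hand $N^{(1)}\cap\{3\text{-rank}\le 1\}$ is a proper closed subset of $N^{(1)}$, so has dimension $\le 2$ and its image under $p$ is not dense. Consequently, for $V$ in a dense open set we have simultaneously $N^{(1)}_V\neq\emptyset$, $N^{(1)}_V$ finite, and $N^{(1)}_V\cap\{3\text{-rank}\le 1\}=\emptyset$; thus every $\alpha\in N^{(1)}_V$ yields $X_V^\alpha$ of $3$-rank exactly $2$. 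This gives at least one such $\alpha$, while part (a) bounds the non-ordinary $\alpha$ to a finite set, giving the ``at most finitely many'' clause. The lower bound $\dim\mathcal{R}_3^{(2,0)}\ge 3$ from \cite[Proposition~5.2]{OzmanPries2016} is then matched, so this three-dimensional family accounts for a component of $\mathcal{R}_3^{(2,0)}$.

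The hard part will be the dominance step: ensuring that the $3$-rank-$2$ locus genuinely surjects onto the space of planes rather than collapsing over a lower-dimensional set of planes $V$ for which $X_V^\alpha$ is non-ordinary for \emph{every} $\alpha$. The finite-fibre observation over $U$ is exactly what excludes this, which is why one must exhibit a $3$-rank-$2$ witness whose plane $V_1$ lies in $U$ (equivalently, for which $X_{V_1}^\alpha$ is ordinary for general $\alpha$). Separating $3$-rank $2$ from $3$-rank $\le 1$ without such a witness---say by an a priori \emph{upper} bound on $\dim\mathcal{R}_3^{(1,0)}$---looks harder, since the available moduli estimates furnish lower bounds on dimensions rather than upper bounds.
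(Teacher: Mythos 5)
Your proposal is correct and takes essentially the same route as the paper: reduce both assertions to polynomial conditions on $(a,b,c,\alpha)$ via $\det(H_{X_V^\alpha})$, exhibit computational witnesses, and spread out by genericity. The paper supplies exactly the witnesses you defer --- for the explicit plane $V:-X_2+X_4=0$ it computes $\det(H)$ as a nonzero factored polynomial in $\alpha$ (so this plane lies in your set $U$) and checks by a SageMath computation that the roots of the factor $\alpha^3+\alpha^2+\alpha-1$ give smooth curves of $3$-rank exactly $2$ --- while leaving implicit, in the remark that ``both statements follow from the next claim,'' the constructibility and finite-fiber argument that you spell out.
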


\begin{proof}
When $d=1$, the value of the $3$-rank of $X_V^\alpha$ is determined by polynomial conditions in $a,b,c$ and $\alpha$.
In particular, $X_V^\alpha$ has $3$-rank $3$ if and only if the determinant of $H_V^\alpha$ is non-zero.  
So the first statement can be proven by checking, for a fixed plane $V$ and fixed parameter $\alpha$, whether ${\rm det}(H_V^\alpha) \not = 0$.
The second statement can be proven by checking, for a fixed plane $V$, whether 
${\rm det}(H_V^\alpha) = 0$ under a polynomial condition on $\alpha$ and whether there exists one value of $\alpha$
satisfying that polynomial condition for which $X_V^\alpha$ is smooth and has $3$-rank $2$.

Thus both statements follow from the next claim.

\smallskip

{\bf Claim:} For the plane $V: -X_2+X_4=0$, the curve $X_V^{\alpha}$ is ordinary for all but finitely many $\alpha \in k - \{0,1,-1\}$, and
$X_V^\alpha$ has $3$-rank $2$ for a non-zero finite number of $\alpha \in k$.

\smallskip

{\bf Proof of claim:}
When $V: -X_2+X_4=0$, the Hasse-Witt matrix $H$ of $X_V^\alpha$ is
\[\left( \begin{array}{ccc}
a_{11} & a_{12} & a_{13}\\
a_{21} & a_{22} & a_{23} \\
a_{31} & a_{32} & a_{33}  
\end{array}\right)\]
where 
  \begin{eqnarray*}
 a_{11} &=& \alpha^{13} - \alpha^{11} - \alpha^{10} + \alpha^9 + \alpha^7 + \alpha^6 - \alpha^3 - \alpha^2 - 1,\\
 a_{12}&=& -\alpha^7 - \alpha^6 + \alpha^5 + \alpha^4 + \alpha^2 + \alpha,\\
 a_{13}&=& \alpha^{10} + \alpha^9 + \alpha^7 - \alpha^6 + \alpha^5 - \alpha^4, \\
 a_{21} &=& -\alpha^{16} - \alpha^{13} + \alpha^{11} + \alpha^9 + \alpha^8 + \alpha^7 - \alpha^5 + \alpha^4 - \alpha^3 - \alpha^2,\\
 a_{22} &=& \alpha^{13} + \alpha^9 + \alpha^8 + \alpha^7 - \alpha^6 - \alpha^4 - \alpha^3 - \alpha^2 - \alpha - 1, \\
 a_{23} &=& -\alpha^{13} + \alpha^{10} + \alpha^9 - \alpha^6 + \alpha^5 + \alpha^4 + \alpha^3 - \alpha^2 - \alpha - 1, \\ 
 a_{31} &=& \alpha^{13} + \alpha^{12} - \alpha^9 + \alpha^8 + \alpha^7 + \alpha^6 - \alpha^5 + \alpha^2 + \alpha, \\
 a_{32} &=& \alpha^{10} - \alpha^8 + \alpha^7 + \alpha^5 - \alpha^4 - \alpha^3 - \alpha^2 - \alpha - 1, \\
 a_{33} &=& \alpha^{12} - \alpha^{10} + \alpha^6 + \alpha^5 - \alpha^4 - \alpha - 1.
 \end{eqnarray*}

The determinant $\text{Det}_{H}$ of $H$ is
\begin{eqnarray*}
\text{Det}_H=\alpha^3 (\alpha + 1)^4 (\alpha -1)^5 (\alpha^3 + \alpha^2 + \alpha -1)  (\alpha^5 + \alpha^4 + \alpha^3 + \alpha^2 -\alpha + 1)\\ (\alpha^5 -\alpha^4 + \alpha^3 + \alpha^2 + \alpha + 1) (\alpha^6 -\alpha^4 + \alpha^3 -\alpha + 1) (\alpha^7 + \alpha^5 + \alpha -1).
\end{eqnarray*}

For each $\alpha\in k-\{0,1,-1\}$ which is not a root of $\text{Det}_{H}$, the Hasse-Witt matrix is invertible and so $X_V^\alpha$ is ordinary. For $\alpha\in k$ satisfying $\alpha^3 + \alpha^2 + \alpha -1=0$, a computation in SageMath \cite{Sage} shows that $X_V^\alpha$ is smooth and the matrix $HH^{(3)}H^{(3^2)}$ has rank $2$. Therefore, for these values of $\alpha$, $X_V^\alpha$ has $3$-rank $2$ by Proposition~\ref{prop:prank}.


\end{proof}

Proposition \ref{Lplane3} does not eliminate the possibility that there exists a plane $V$ 
such that ${\rm det}(H_V^\alpha)=0$ for all $\alpha$.

\begin{proposition} \label{Pfixalpha}
For a generic choice of $\alpha \in k$, the curve $X_V^\alpha$ is ordinary for a generic choice of $V$ and 
has $3$-rank $2$ under a codimension $1$ condition on $V$.
\end{proposition}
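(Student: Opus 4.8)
The plan is to transfer the information from Proposition~\ref{Lplane3}, where the plane $V$ was fixed and $\alpha$ varied, to the present situation where $\alpha$ is fixed and $V$ varies, by studying a single incidence variety in the joint parameter space. Throughout we set $d=1$, so that a plane $V$ is recorded by the point $(a,b,c)\in\mathbb{A}^3$, and we write $H=H_V^\alpha$ for the Hasse-Witt matrix of $X_V^\alpha$ as in \eqref{HW3}; its entries are polynomials in $a,b,c,\alpha$, and by Proposition~\ref{prop:prank} the $3$-rank of $X_V^\alpha$ is the rank of $HH^{(3)}H^{(3^2)}$, so that $X_V^\alpha$ is ordinary exactly when $\det(H)\neq 0$.

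First I would dispose of the ordinarity statement. The computation in the proof of Proposition~\ref{Lplane3} exhibits a plane $V_0$ and a value of $\alpha$ with $\det(H_{V_0}^\alpha)\neq 0$, so $\det(H)$ is not identically zero as a polynomial in $a,b,c,\alpha$. Hence for all $\alpha$ outside a finite set the polynomial $\det(H_V^\alpha)$ in $a,b,c$ is not identically zero, its non-vanishing locus is dense open in $\mathbb{A}^3$, and $X_V^\alpha$ is ordinary for a generic $V$. For such $\alpha$ the locus $\mathcal{D}_\alpha=\{V:\det(H_V^\alpha)=0\}$ is a nonempty hypersurface, i.e.\ a codimension $1$ condition on $V$, on which the $3$-rank is at most $2$.

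The substance is to show that the $3$-rank equals exactly $2$ on a dense subset of $\mathcal{D}_\alpha$. Consider the incidence variety $\mathcal{T}=\{(\alpha,V): X_V^\alpha \text{ has } 3\text{-rank } 2\}$ inside the four-dimensional space of pairs $(\alpha,V)$. Proposition~\ref{Lplane3} says that the projection $\mathcal{T}\to\mathbb{A}^3_{(a,b,c)}$ is dominant (for generic $V$ there is at least one such $\alpha$) with finite fibres (for generic $V$ there are only finitely many), so $\dim\mathcal{T}=3$. If I can show that the other projection $\mathcal{T}\to\mathbb{A}^1_\alpha$ is dominant, then its generic fibre has dimension $3-1=2$; since this fibre is contained in $\mathcal{D}_\alpha$, which has dimension $2$, it is dense in $\mathcal{D}_\alpha$, and the proposition follows.

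The main obstacle is precisely this dominance over the $\alpha$-line, i.e.\ ruling out that all of the $3$-rank-$2$ behaviour is concentrated over finitely many special values of $\alpha$. By the purity of the $p$-rank stratification \cite[Proposition 5.2]{OzmanPries2016} the non-ordinary locus $\mathcal{W}=\{\det(H)=0\}$ is pure of codimension $1$, hence of dimension $3$, so $\overline{\mathcal{T}}$ is a union of irreducible components of $\mathcal{W}$, and since $\mathcal{W}\to\mathbb{A}^1_\alpha$ is dominant by the previous paragraph some component of $\mathcal{W}$ dominates the $\alpha$-line. It remains to check that such a dominating component lies in $\overline{\mathcal{T}}$, equivalently that the generic non-ordinary member $X_V^\alpha$ has stable Frobenius rank exactly $2$ rather than dropping further: if instead $\mathcal{T}$ were supported over a finite set, some fibre $\mathcal{T}_{\alpha_1}$ would be dense in $\mathbb{A}^3$, forcing $X_V^{\alpha_1}$ to be non-ordinary for every plane $V$ while carrying $3$-rank exactly $2$ for the generic one. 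I expect to eliminate this degenerate possibility by the same explicit computation used in the Claim of Proposition~\ref{Lplane3}, where $HH^{(3)}H^{(3^2)}$ was found to have rank exactly $2$ along the non-ordinary locus: carrying out that stable-rank computation for an indeterminate $\alpha$, or simply exhibiting for one further value of $\alpha$ a plane $V\in\mathcal{D}_\alpha$ with $3$-rank $2$ whose $\alpha$ differs from those arising for $V_0$, shows that the $3$-rank-$2$ values of $\alpha$ genuinely vary with $V$, so that $\mathcal{T}$ meets infinitely many fibres and hence dominates $\mathbb{A}^1_\alpha$. Verifying that crossing the single divisor $\mathcal{D}_\alpha$ drops the $p$-rank by exactly one for generic $\alpha$ is the crux of the argument.
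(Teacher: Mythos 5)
Your reduction of the second statement to the dominance of the incidence locus $\mathcal{T}=\{(\alpha,V):X_V^\alpha \text{ smooth of } 3\text{-rank } 2\}$ over $\mathbb{A}^1_\alpha$ is sound, and you correctly flag that dominance as the crux --- but you do not close it, and the concrete patch you offer cannot. Exhibiting one further value of $\alpha$ admitting a $3$-rank-$2$ plane shows only that the image of $\mathcal{T}$ in $\mathbb{A}^1_\alpha$ contains two points; this is perfectly consistent with $\overline{\mathcal{T}}$ being a union of fibre components $\{\alpha=\alpha_i\}\times\mathbb{A}^3$ of $\mathcal{W}=\{\det H=0\}$, so the inference ``the $3$-rank-$2$ values of $\alpha$ genuinely vary with $V$, hence $\mathcal{T}$ meets infinitely many fibres'' is a non sequitur: no finite list of witnesses can ever establish dominance by itself. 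Relatedly, your first paragraph asserts prematurely that $\mathcal{D}_\alpha$ is a \emph{nonempty} hypersurface for generic $\alpha$: knowing $\det(H_V^\alpha)\not\equiv 0$ in $(a,b,c)$ for generic $\alpha$ only makes $\mathcal{D}_\alpha$ proper, while nonemptiness (non-constancy of the specialized determinant) for generic $\alpha$ is again equivalent to $\mathcal{W}$ dominating the $\alpha$-line --- and Proposition~\ref{Lplane3} cannot supply this, since a fibre component $\{\alpha=\alpha_i\}\times\mathbb{A}^3$ also dominates the $V$-space $\mathbb{A}^3$ with the finite-fibre behaviour that proposition records. (The appeal to purity via \cite{OzmanPries2016} is also unnecessary here: $\mathcal{W}$ is the zero locus of the single polynomial $\det H$, hence automatically pure of codimension $1$.)

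What closes the gap --- and what the paper actually does --- is a single verification in which \emph{both} checks happen at the \emph{same} value $\alpha_0$: the paper fixes $\alpha_0\in\mathbb{F}_9$ a root of $t^2+2t+2$, computes the entries of $H$ as explicit polynomials in $a,b,c$ via \eqref{HW3}, checks that $\det(H_V^{\alpha_0})$ is not identically zero in $(a,b,c)$, and exhibits $(a,b,c)=(2,0,2)$ on its zero locus for which $X_V^{\alpha_0}$ is smooth of $3$-rank exactly $2$. The pairing is what kills the degenerate scenario: since $\det(H_V^{\alpha_0})\not\equiv 0$, the fibre $\{\alpha=\alpha_0\}$ is \emph{not} a component of $\mathcal{W}\subset\mathbb{A}^4$, so the component $W_1$ of $\mathcal{W}$ through $(\alpha_0,(2,0,2))$ is cut out by an irreducible factor of $\det H$ that genuinely involves $a,b,c$; writing such a factor as $\sum_m c_m(\alpha)\,m(a,b,c)$ with some $c_{m_0}\neq 0$ for a nonconstant monomial $m_0$, its zero locus is nonempty over every $\alpha$ outside the roots of $c_{m_0}$, so $W_1$ dominates $\mathbb{A}^1_\alpha$. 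Since on $\mathcal{W}$ the rank of $HH^{(3)}H^{(3^2)}$ is automatically at most $2$ and the conditions ``rank $\geq 2$'' and ``$X$ smooth'' are open, $\mathcal{T}$ contains a dense open of $W_1$, giving for generic $\alpha$ a nonempty codimension-$1$ family of planes with $3$-rank exactly $2$. In short: your own first paragraph already contains half of the required finite verification; replacing your ``second value of $\alpha$'' fallback by the requirement that the non-vanishing of $\det(H_V^{\alpha_0})$ and the rank-$2$ witness occur at the same $\alpha_0$ turns your plan into a complete proof, and this is in substance the paper's argument.
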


\begin{proof}
The first statement already follows from Proposition \ref{Lplane3}.
The second statement can be proven by checking, for fixed $\alpha \in k$, whether 
${\rm det}(H_V^\alpha) = 0$ under a polynomial condition on $a,b,c$ and whether there exists one possibility for $a,b,c$
satisfying that polynomial condition for which $X_V^\alpha$ is smooth and has $3$-rank $2$.

Let $\alpha \in {\mathbb F}_9$ be fixed to be a root of the polynomial $t^2 + 2t + 2$.  If $d=1$, the Hasse-Witt matrix $H=(a_{ij})_{i,j}$ of $X_V^{\alpha}$, for arbitrary $a$, $b$, $c$, is given by~:
$$\begin{array}{l}
a_{11}= a^3c +b^2+ac+ (\alpha + 1)(a^3-bc+a) + (\alpha - 1)(ab-b)-\alpha c^2      \\
a_{12}=b^3c + (\alpha + 1)b^3      \\    
a_{13}=c^4 - ac + (\alpha + 1)c^2(c-1) +  (-\alpha + 1)b^2 + -\alpha (ab+ bc) \\
a_{21}=a^3b -ab + (-\alpha - 1)(a^3+ac+c) + (-\alpha + 1)(a^2+c^2+a-bc)    \\
a_{22}= b^4 + (-\alpha - 1)b^3 -\alpha c^2 + \alpha b      \\
a_{23}= bc^3 + (-\alpha - 1)c^3 + \alpha (a^2 -ac+bc+c^2)+ (\alpha - 1)ab  \\
a_{31}=a^4 - a^2+ (-\alpha + 1)+ (\alpha + 1)(a^3-ab-b )+ (-\alpha + 1)(b^2-bc-c) + \alpha ac     \\
a_{32}=ab^3 + (\alpha + 1)b^3 + \alpha bc  \\
a_{33}=ac^3 + a^2+ (\alpha + 1)(c^3+ac -c) - \alpha (b^2+bc+ab).
\end{array}$$
Then one can check that ${\rm det}(H)$ is non-vanishing in $a,b,c$. 
Also, when $(a,b,c)=(2,0,2)$, then one can check that 
the curve $X_V^\alpha$ is smooth with $3$-rank $2$.
\end{proof}

We note that Proposition \ref{Pfixalpha} does not eliminate the possibility that there exists $\alpha \in k$ 
such that ${\rm det}(H_V^\alpha)=0$ for all planes $V$.

\section{Points on the Kummer surface} \label{Skummer}

Suppose that $Z$ is a supersingular curve of genus $2$ defined over a finite field ${\mathbb F}_q$ 
of characteristic $p$.
This section contains a result about the number of ${\mathbb F}_q$-points on the Kummer surface $K$ of ${\rm Jac}(Z)$.
The material in this section is probably well known to experts but we could not find it in the literature.
The connection between this section and the rest of the paper is found in Question \ref{Qss}:
if $X=V \cap K$ for some plane $V$ and if $p$ divides $\#X({\mathbb F}_q)$, 
then the $p$-rank of $X$ is at least $1$.

Let $Z$ be a genus $2$ curve over ${\mathbb{F}}_q$.  Suppose that $Z$ has equation $z^2=D(x)$ and define a quadratic twist $W$ of $Z$ by $\lambda z^2=D(x)$ for $\lambda\in\mathbb{F}_{q}^\times\setminus (\mathbb{F}_{q}^\times)^2$. The isomorphism class of $W$ does not depend on the choice of non-square element $\lambda$.

\begin{lemma}\label{lem:Kummer points}
Let $Z$ be a genus $2$ curve over ${\mathbb{F}}_q$ and let $W$ be its quadratic twist.
Let $K=\Jac(Z)/\langle -1 \rangle$.
Then \[|K(\mathbb{F}_q)|=(|{\rm Jac}(Z)(\mathbb{F}_q)|+|{\rm Jac}(W)(\mathbb{F}_q)|)/2.\]
\end{lemma}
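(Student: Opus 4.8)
The plan is to realize $K(\mathbb{F}_q)$ as the set of orbits of the involution $\iota=[-1]$ acting on a suitable finite set of geometric points of $\Jac(Z)$, and then to count those orbits. Write $A=\Jac(Z)$ and let $\pi$ denote its $q$-power Frobenius endomorphism, so that the quotient map $\phi\colon A\to K$ by $\iota$ is defined over $\mathbb{F}_q$ and commutes with Frobenius. First I would determine which geometric points lie above $K(\mathbb{F}_q)$: since $\phi$ commutes with Frobenius, a point $\phi(P)$ is Frobenius-fixed exactly when $\pi(P)\in\phi^{-1}(\phi(P))=\{P,-P\}$. Hence, writing $|\Ker(\cdot)|$ for the number of $\overline{\mathbb{F}}_q$-points of a kernel and setting
\[ S=\phi^{-1}(K(\mathbb{F}_q))=\Ker(\pi-1)\cup\Ker(\pi+1)\subseteq A(\overline{\mathbb{F}}_q), \]
the map $\phi$ restricts to a surjection $S\to K(\mathbb{F}_q)$ whose fibres are exactly the $\iota$-orbits $\{P,-P\}$. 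A fibre has one element when $P$ is $2$-torsion and two elements otherwise, so by Burnside's lemma for the $\iota$-action,
\[ |K(\mathbb{F}_q)|=\tfrac{1}{2}\bigl(|S|+|\{P\in S:2P=0\}|\bigr). \]

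Next I would compute $|S|$ by inclusion--exclusion. Both $\Ker(\pi-1)$ and $\Ker(\pi+1)$ are $\iota$-stable, and their intersection consists of the points satisfying $\pi(P)=P$ and $P=-P$, i.e.\ the rational $2$-torsion $A[2](\mathbb{F}_q)$; the same set equals $\{P\in S:2P=0\}$, since for $2$-torsion $P$ the conditions $\pi(P)=P$ and $\pi(P)=-P$ coincide. Thus
\[ |S|=|\Ker(\pi-1)|+|\Ker(\pi+1)|-|A[2](\mathbb{F}_q)|, \]
and substituting into the orbit count makes the two contributions of $|A[2](\mathbb{F}_q)|$ cancel, leaving $|K(\mathbb{F}_q)|=\tfrac{1}{2}\bigl(|\Ker(\pi-1)|+|\Ker(\pi+1)|\bigr)$. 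Here $|\Ker(\pi-1)|=|A(\mathbb{F}_q)|=|\Jac(Z)(\mathbb{F}_q)|$ because $\pi-1$ is separable.

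The one genuinely arithmetic input, and the step I expect to require the most care, is the identification $|\Ker(\pi+1)|=|\Jac(W)(\mathbb{F}_q)|$. For this I would use that the hyperelliptic involution of $Z$ induces $\iota=[-1]$ on $A$ and that $W$ is precisely the twist of $Z$ by the cocycle sending Frobenius to this involution; equivalently, $\Jac(W)$ becomes isomorphic to $A$ over $\mathbb{F}_{q^2}$ in such a way that its Frobenius endomorphism corresponds to $-\pi$. Consequently $|\Jac(W)(\mathbb{F}_q)|=\deg(-\pi-1)=\deg(\pi+1)=|\Ker(\pi+1)|$, where the last equality holds because $\pi+1$ is separable (it is the negative of the Frobenius-minus-one map of the twist, which is étale). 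Combining this with the previous paragraph yields the claimed formula. I would also remark that supersingularity of $Z$ plays no role in this lemma, being needed only for the application that follows in this section.
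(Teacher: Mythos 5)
Your proof is correct and follows essentially the same route as the paper's: both identify the fibre of $\phi$ over a rational point of $K$ as a pair $\{Q,-Q\}$ with $\pi(Q)=\pm Q$ and use the $\mathbb{F}_{q^2}$-isomorphism $\psi\colon\Jac(Z)\to\Jac(W)$ satisfying $\tau\psi\tau^{-1}=-\psi$ to match the $\pi(Q)=-Q$ points with $\Jac(W)(\mathbb{F}_q)$. The only difference is bookkeeping: the paper verifies directly that each point of $K(\mathbb{F}_q)$ is counted exactly twice in $|\Jac(Z)(\mathbb{F}_q)|+|\Jac(W)(\mathbb{F}_q)|$ (with the $2$-torsion fibres contributing once to each side), whereas you organize the identical count via Burnside's lemma, inclusion--exclusion on $\Ker(\pi-1)\cup\Ker(\pi+1)$, and the separability of $\pi\pm 1$.
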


\begin{proof}
The degree $2$ cover $\phi: \Jac(Z)\to K$ is defined over $\bbF_q$.
Let $\psi:\Jac(Z)\to \Jac(W)$ be the isomorphism of abelian varieties over $\bbF_{q^2}$ induced by the isomorphism of the underlying curves given by $(x,z)\mapsto (x,\sqrt{\lambda}z)$. 
Let $\tau$ be a generator for $\Gal(\bbF_{q^2}/\bbF_q)$. Then $\tau\psi\tau^{-1}=-\psi$. 
 
Let $P\in K(\bbF_q)$.
Write $\phi^{-1}(P)=\{Q,-Q\}$. Since $P\in K(\bbF_q)$ and $\phi$ is defined over $\bbF_q$, then 
$\{\sigma(Q),-\sigma(Q)\}=\{Q,-Q\}$ 
for all $\sigma\in\Gal(\overline{\bbF}_q/\bbF_q)$. Therefore, $Q$ is defined over $\bbF_{q^2}$ and either $\tau(Q)=Q$, whereby $Q\in \Jac(Z)(\bbF_q)$, or $\tau(Q)=-Q$, whereby $\psi(Q)\in \Jac(W)(\bbF_q)$. The points $P\in K(\bbF_q)$ for which $\phi^{-1}(P) =\{Q,-Q\}$ with $Q=-Q$ are precisely those for which $Q\in \Jac(Z)(\bbF_q)$ and $\psi(Q)\in \Jac(W)(\bbF_q)$. Therefore, every point in $K(\bbF_q)$ is counted twice in $|\Jac(Z)(\bbF_q)|+|\Jac(W)(\bbF_q)|$.
\end{proof}

The zeta function of a genus $2$ curve $Z/\bbF_q$ is 
\[\mathcal{Z}(T)=\exp\left(\sum_{k\geq 1}{\frac{|Z(\bbF_{q^k})|}{k}T^k} \right)=\frac{L_{Z/\bbF_q}(T)}{(1-T)(1-qT)},\]
where $L_{Z/\bbF_q}(T)=1+a_1T+a_2T^2+qa_1T^3+q^2T^4=\prod_{i=1}^4{(1-\alpha_iT)}$ with $\alpha_1\alpha_3=\alpha_2\alpha_4=q$.

\begin{lemma}
Let $Z$ be a genus $2$ curve over ${\mathbb{F}}_q$, let $A={\rm Jac}(Z)$ and let $K=A/\langle -1 \rangle$. Then 
\[|{\rm Jac}(Z)(\mathbb{F}_q)|=1+a_1+a_2+a_1q+q^2\]
and
\[|K(\mathbb{F}_q)|=1+a_2+q^2\]
where the $a_i$ are the coefficients of $L_{Z/\bbF_q}(T)$ as defined above.
\end{lemma}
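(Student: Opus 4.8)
The plan is to read both cardinalities off the Frobenius eigenvalues $\alpha_1,\dots,\alpha_4$ occurring in $L_{Z/\bbF_q}(T)=\prod_{i=1}^4(1-\alpha_i T)$. First I would recall the standard consequence of the Weil description of the zeta function: for any curve $C/\bbF_q$ the rational points of its Jacobian are counted by the characteristic polynomial of Frobenius evaluated at $1$, namely $|\Jac(C)(\bbF_q)|=\prod_{i}(1-\alpha_i)=L_{C/\bbF_q}(1)$. Applying this to $Z$ and substituting $T=1$ into $L_{Z/\bbF_q}(T)=1+a_1T+a_2T^2+qa_1T^3+q^2T^4$ gives at once
\[
|\Jac(Z)(\bbF_q)|=L_{Z/\bbF_q}(1)=1+a_1+a_2+a_1q+q^2,
\]
which is the first formula.

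For the Kummer surface I would invoke Lemma~\ref{lem:Kummer points}, which reduces the count to $|K(\bbF_q)|=\tfrac12\bigl(|\Jac(Z)(\bbF_q)|+|\Jac(W)(\bbF_q)|\bigr)$ for the quadratic twist $W$. The remaining input is the $L$-polynomial of $W$. Here I would use that the Frobenius eigenvalues of a quadratic twist are those of $Z$ multiplied by the value $-1$ of the associated quadratic character at Frobenius, so the eigenvalues of $W$ are $-\alpha_1,\dots,-\alpha_4$ and hence $L_{W/\bbF_q}(T)=\prod_i(1+\alpha_iT)=L_{Z/\bbF_q}(-T)$. Evaluating at $T=1$ yields $|\Jac(W)(\bbF_q)|=L_{Z/\bbF_q}(-1)=1-a_1+a_2-a_1q+q^2$.

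Combining these, I would substitute into Lemma~\ref{lem:Kummer points}, where the odd-degree terms cancel:
\[
|K(\bbF_q)|=\frac12\Bigl((1+a_1+a_2+a_1q+q^2)+(1-a_1+a_2-a_1q+q^2)\Bigr)=1+a_2+q^2 .
\]

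The only nontrivial point, and thus the main obstacle, is justifying $L_{W/\bbF_q}(T)=L_{Z/\bbF_q}(-T)$. If I wished to avoid quoting the twist-eigenvalue fact, I would argue directly at the level of point counts: the fixed non-square $\lambda$ remains a non-square in $\bbF_{q^k}$ exactly when $k$ is odd, so $|Z(\bbF_{q^k})|+|W(\bbF_{q^k})|=2(q^k+1)$ for odd $k$ while $W\cong Z$ over $\bbF_{q^k}$ for even $k$; together with $|Z(\bbF_{q^k})|=q^k+1-\sum_i\alpha_i^k$ this gives $|W(\bbF_{q^k})|=q^k+1-\sum_i(-\alpha_i)^k$ for all $k$, and feeding these counts into the exponential defining the zeta function recovers $L_{W/\bbF_q}(T)=\prod_i(1+\alpha_iT)$, as needed.
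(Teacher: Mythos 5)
Your proof is correct, but the first half takes a different route from the paper's. For $|\Jac(Z)(\bbF_q)|$ you quote the general fact that $\#\Jac(C)(\bbF_q)=\deg(\mathrm{id}-F)=\prod_i(1-\alpha_i)=L_{C/\bbF_q}(1)$ and simply evaluate the $L$-polynomial at $T=1$; the paper instead works at the level of point counts, starting from the genus-$2$ identity $|\Jac(Z)(\bbF_q)|=\tfrac{1}{2}\bigl(|Z(\bbF_q)|^2+|Z(\bbF_{q^2})|\bigr)-q$ and extracting $a_1$ and $a_2$ by equating coefficients of $T$ and $T^2$ in the defining exponential of the zeta function. Your argument is shorter and more conceptual but leans on the standard theorem about the characteristic polynomial of Frobenius on the Jacobian, while the paper's is self-contained modulo the classical point-count formula. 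For the Kummer count the two proofs coincide: both combine Lemma~\ref{lem:Kummer points} with $L_{W/\bbF_q}(T)=L_{Z/\bbF_q}(-T)$ and cancel the odd-degree terms. Here you actually do more than the paper, which asserts the twist identity without proof: your fallback argument via $|Z(\bbF_{q^k})|+|W(\bbF_{q^k})|=2(q^k+1)$ for odd $k$ (since $\lambda^{(q^k-1)/2}=(-1)^k$) and $W\cong Z$ over $\bbF_{q^k}$ for even $k$ is a valid derivation; the only point worth making explicit is that the odd-$k$ identity requires checking the points at infinity of the smooth models as well as the affine points $(x,z)$, where for each $x$ the fibres of $Z$ and $W$ together always contribute exactly $2$ points, and the contributions at infinity likewise sum to $2$.
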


\begin{proof}
The second statement follows from the first, using Lemma \ref{lem:Kummer points} and the fact that if $W/\bbF_q$ is the quadratic twist of $Z$, then $L_{W/\bbF_q}(T)=L_{Z/\bbF_q}(-T)=1-a_1T+a_2T^2-qa_1T^3+q^2T^4$. 

For the first statement, note that
\begin{equation}\label{eq:Jac points}
|{\rm Jac}(Z)(\mathbb{F}_q)|=\frac{|Z(\bbF_q)|^2+|Z(\bbF_{q^2})|}{2}-q.
\end{equation}
Equating the coefficients of $T$ and $T^2$ in
\[L_{Z/\bbF_q}(T)=(1-T)(1-qT)=\exp\left(\sum_{k\geq 1}{\frac{|Z(\bbF_{q^k})|}{k}T^k} \right)\]
gives $a_1=|Z(\bbF_q)|-(q+1)$ and $a_2=\frac{1}{2}|Z(\bbF_q)|^2+\frac{1}{2}|Z(\bbF_{q^2})|-(q+1)|Z(\bbF_q)|+q$. The result now follows from \eqref{eq:Jac points}.
\end{proof}

\begin{corollary}
Let $Z$ be a supersingular genus $2$ curve over ${\mathbb{F}}_q$, let $A={\rm Jac}(Z)$ and let $K=A/\langle -1 \rangle$. Then 
\[|K(\mathbb{F}_q)|\equiv 1\bmod{q}.\]
\end{corollary}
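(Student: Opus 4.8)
The plan is to reduce the whole statement to a single divisibility on the middle coefficient of the $L$-polynomial. By the preceding lemma we already have the exact count $|K(\mathbb{F}_q)| = 1 + a_2 + q^2$, where $a_2$ is the coefficient of $T^2$ in $L_{Z/\mathbb{F}_q}(T)$. Since $q \mid q^2$, proving the corollary is equivalent to showing $a_2 \equiv 0 \pmod{q}$, and everything I would do is aimed at establishing that congruence.

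To control the size of $a_2$ I would pass to the Frobenius eigenvalues. Writing $L_{Z/\mathbb{F}_q}(T) = \prod_{i=1}^4 (1 - \alpha_i T)$, the coefficient $a_2$ is the second elementary symmetric function, $a_2 = \sum_{1 \le i < j \le 4} \alpha_i \alpha_j$. The key input is the interpretation of supersingularity via the Newton polygon: since $Z$ is supersingular (equivalently, by the remark above, $Z$ has $p$-rank $0$, and for abelian surfaces these two notions coincide), the Jacobian $\mathrm{Jac}(Z)$ has Newton polygon with all four slopes equal to $1/2$. Writing $q = p^n$ and normalizing the $p$-adic valuation $v_p$ so that $v_p(q) = n$, this says precisely that $v_p(\alpha_i) = n/2$ for every $i$.

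From here the argument is a one-line estimate. Each product satisfies $v_p(\alpha_i \alpha_j) = v_p(\alpha_i) + v_p(\alpha_j) = n$, so by the ultrametric inequality $v_p(a_2) \ge \min_{i<j} v_p(\alpha_i \alpha_j) = n = v_p(q)$. As $a_2 \in \mathbb{Z}$ and $q = p^n$, this yields $q \mid a_2$, and therefore $|K(\mathbb{F}_q)| = 1 + a_2 + q^2 \equiv 1 \pmod{q}$, as desired. (Equivalently, one may write $\alpha_i = \sqrt{q}\,\zeta_i$ with each $\zeta_i$ a root of unity, so that $a_2 = q \sum_{i<j} \zeta_i \zeta_j$; the factor $\sum_{i<j}\zeta_i\zeta_j = a_2/q$ is then simultaneously rational and an algebraic integer, hence a rational integer, giving $q \mid a_2$ directly.)

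The only genuine obstacle is justifying the Newton-slope statement, namely the classical fact that a supersingular abelian variety over $\mathbb{F}_q$ has all Frobenius slopes equal to $1/2$ (equivalently, that every eigenvalue of Frobenius is $\sqrt{q}$ times a root of unity). This is the single external ingredient the proof rests on; once it is in hand, the remainder is just the symmetric-function identity for $a_2$ together with the ultrametric inequality.
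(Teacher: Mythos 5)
Your proof is correct and follows essentially the same route as the paper: both reduce, via the preceding lemma $|K(\mathbb{F}_q)| = 1 + a_2 + q^2$, to the divisibility $q \mid a_2$. The paper simply asserts this divisibility as a known consequence of supersingularity, whereas you supply the standard justification (slopes all $1/2$, hence $v_p(\alpha_i\alpha_j) = v_p(q)$ and the ultrametric bound), which is a valid filling-in of the same step rather than a different argument.
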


\begin{proof}
If $Z$ is supersingular then $q\mid a_2$. The result now follows.
\end{proof}

\begin{question} \label{Qss}
Suppose $Z$ is supersingular.  
Does there exist a plane $V \subset {\mathbb P}^3$ defined over ${\mathbb F}_q$ such that 
$p$ divides $\#X({\mathbb F}_q)$ where $X=V \cap K$?
If so, then the $p$-rank of $X$ is at least $1$.
\end{question}

\section{Results for arbitrary $g$}\label{sec:induction}

In this section, when $3 \leq p \leq 19$, we use the results from Section \ref{sec:forwards}
about genus $3$ curves in characteristic $p$
to verify the existence of smooth curves $X$ of arbitrary genus $g \geq 3$ having an unramified double cover 
whose Prym has small $p$-rank.  Specifically, we work inductively to 
study the dimension of certain moduli strata $\mathcal{R}_g^{(f,f')}$ 
for $g\geq 3$ in characteristic $p$ with $3\leq p\leq 19$.
The reader is strongly advised to read \cite{OzmanPries2016} before reading this section.

A highlight of this approach is that $X$ is smooth and we can control its $p$-rank $f$.
Indeed, the original proof of \cite{Wirtinger}, found in \cite[Section 2]{MR1013156}, 
shows that $\bar{\mathcal{R}}_g^{(f'+1, f')}$ is non-empty for each $g \geq 2$ and $0 \leq f' \leq g-1$;
in other words, there is a \emph{singular} curve of genus $g$ and $p$-rank $f'+1$ with an unramified double cover 
$\pi$ such that $P_\pi$ has $p$-rank $f'$.
We omit the details of this argument.

In this section, the word \emph{component} means irreducible component.
Although the phrasing is slightly redundant, we emphasize that a component of a given dimension is 
\emph{non-empty} because this property of the component is the most difficult to prove 
and is sufficient to yield the existence applications.

\subsection{Increasing the $p$-rank of the Prym variety}

The next result allows us to use geometric information about $\mathcal{R}_g^{(f,f')}$
to deduce geometric information about $\mathcal{R}_g^{(f,F')}$ when $f' \leq F' \leq g-1$. 

\begin{proposition} \label{Pgoup} \cite[Proposition 5.2]{OzmanPries2016}
Let $g \geq 3$.
Suppose that $\mathcal{R}_g^{(f, f')}$ is non-empty and has a component of dimension $g-2 +f +f'$
in characteristic $p$.  
Then $\mathcal{R}_g^{(f, F')}$ is non-empty and has a component of dimension $g-2 +f+F'$
in characteristic $p$
for each $F'$ such that $f' \leq F' \leq g-1$.
\end{proposition}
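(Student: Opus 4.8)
The plan is to reduce the statement to the single step $F' = f'+1$ and then iterate, climbing one unit of Prym $p$-rank at a time until any prescribed $F'$ with $f' \le F' \le g-1$ is reached. So I fix a component $S$ of $\mathcal{R}_g^{(f,f')}$ with $\dim S = g-2+f+f'$ and assume $f' \le g-2$ (if $f' = g-1$ there is nothing to prove). Recall from the introduction that $\dim \mathcal{R}_g = 3g-3$ and that the forgetful map $\mathcal{R}_g \to \mathcal{M}_g$, $(\pi\colon Y \to X) \mapsto X$, is finite; hence the locus $\Gamma = \{\,p\text{-rank}(X) = f\,\}$, being the preimage of $\mathcal{M}_g^f$, is pure of dimension $2g-3+f$. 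I let $\Gamma_0$ be the irreducible component of $\Gamma$ containing $S$.

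First I would record the dimension comparison. Since $f' \le g-2$, we have $\dim S = g-2+f+f' < 2g-3+f = \dim \Gamma_0$, so $S$ is a proper closed subvariety of the irreducible variety $\Gamma_0$. Because the $p$-rank of the Prym is lower semicontinuous, its generic value on $\Gamma_0$ is some $e \geq f'$; the strict inequality $\dim S < \dim\Gamma_0$ forces $e \geq f'+1$, for otherwise the locus where the Prym $p$-rank equals $f'$ would be open dense in $\Gamma_0$ and hence of dimension $\dim\Gamma_0$, contradicting that $S$ is one of its components.

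Next I would invoke purity. Regarding the Prym as a family of principally polarized abelian varieties of dimension $g-1$ over $\Gamma_0$, the purity of the $p$-rank stratification (the Newton polygon purity used in \cite[Proposition 5.2]{OzmanPries2016}) shows that the closed locus $\{\,p\text{-rank}(P_\pi) \le f'\,\}$ has codimension exactly one in $\{\,p\text{-rank}(P_\pi) \le f'+1\,\}$ and that each of its components lies in the closure of the stratum where the Prym $p$-rank equals $f'+1$. Choosing a component $S'$ of $\mathcal{R}_g^{(f,f'+1)}$ with $S \subseteq \overline{S'}$ — note that $S' \subseteq \Gamma_0$, so $X$ still has $p$-rank exactly $f$ along $S'$ — purity gives $\dim S' \le \dim S + 1$, while the universal lower bound $\dim S' \ge g-2+f+(f'+1)$ from \cite[Proposition 5.2]{OzmanPries2016} gives the reverse inequality. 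Hence $\dim S' = \dim S + 1 = g-2+f+(f'+1)$, so $S'$ is a component of $\mathcal{R}_g^{(f,f'+1)}$ of the required dimension.

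Finally I would iterate this step, replacing $S$ by $S'$; this is legitimate as long as the current Prym $p$-rank is at most $g-2$, which is exactly the range needed to reach any $F' \le g-1$. The main obstacle is the purity input of the third paragraph: one must know both that lowering the Prym $p$-rank by one is a codimension-one phenomenon (so that $S'$ is only one dimension larger than $S$) and that $S$ genuinely lies on the boundary of the next stratum rather than forming an isolated component of a lower stratum. Both are consequences of the Newton polygon purity for the Prym; the remaining bookkeeping — that the deformation increasing the Prym $p$-rank preserves $p\text{-rank}(X)=f$ — is handled automatically by working inside the single component $\Gamma_0$ of the $p$-rank-$f$ locus for $X$.
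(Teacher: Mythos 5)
You should first note that the paper does not actually prove this proposition: it is quoted verbatim from \cite[Proposition 5.2]{OzmanPries2016}, so there is no internal proof to match against. Your skeleton — work inside a component $\Gamma_0$ of $\Pi^{-1}(\mathcal{M}_g^f)$, which is pure of dimension $2g-3+f$ by Faber--van der Geer plus finiteness of $\mathcal{R}_g\to\mathcal{M}_g$; climb the Prym $p$-rank one unit at a time; pinch the dimension of the new component between an upper bound coming from purity and the lower bound $g-2+f+(f'+1)$ — is indeed the right shape and is in the spirit of the cited source (citing the lower-bound half of \cite[Proposition 5.2]{OzmanPries2016} is not circular, since the paper's introduction quotes that half as an independent statement). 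Your semicontinuity argument that the generic Prym $p$-rank on $\Gamma_0$ is at least $f'+1$ is also correct.

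The genuine gap is in the purity step. You assert that ``purity of the $p$-rank stratification'' shows that $\{p\text{-rank}(P_\pi)\leq f'\}$ has codimension exactly one in $\{p\text{-rank}(P_\pi)\leq f'+1\}$, and that each of its components lies in the closure of the $(f'+1)$-stratum. But de Jong--Oort purity controls the \emph{Newton polygon} jump locus: where the NP differs from the generic one, it does so in pure codimension one. A Newton polygon jump need not lower the $p$-rank (the middle slopes can rise while the slope-$0$ multiplicity is unchanged), so iterating NP purity produces a chain of codimension-one loci whose generic $p$-rank may stay equal to $f'+1$ for several steps; consequently, a component of $\{\leq f'\}$ inside an irreducible locus with generic Prym $p$-rank $f'+1$ could a priori have codimension at least $2$, and your inequality $\dim S'\leq \dim S+1$ does not follow. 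What is actually needed is the finer statement that in a family of $(g-1)$-dimensional principally polarized abelian varieties over an irreducible base with generic $p$-rank $e$, every component of the $p$-rank $\leq f'$ locus has codimension at most $e-f'$ (here $e=f'+1$, so codimension at most $1$); this rests on inputs beyond NP purity — local defining-equation bounds for the loci where the Newton polygon lies on or above a fixed polygon (Katz's slope estimates), or Norman--Oort-type step-by-step deformation arguments — and it is precisely the nontrivial content packaged into the result you are trying to reprove. A second, smaller point you should make explicit: your argument uses that $\overline{S}$ is a \emph{component} of the closed locus $\{\leq f'\}$, not merely contained in it; this is true, but requires the observation that any component of $\{\leq f'\}$ containing $S$ has generic Prym $p$-rank exactly $f'$ (otherwise it would lie in $\{\leq f'-1\}$), whence its open $f'$-stratum is irreducible and must coincide with $S$ by maximality.
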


\subsection{Background on boundary of $\mathcal{R}_g$} \label{Sboundary}

The strategy used in \cite{OzmanPries2016} is to use unramified double
covers of \emph{singular} curves of given genus and $p$-rank to produce 
unramified double covers of \emph{smooth} curves of the same genus and $p$-rank, 
with control over the $p$-rank of the Prym variety.
This strategy must be implemented very precisely because, in general, the $p$-rank of both the curve and 
the Prym will increase when deforming $X$ away from the boundary.
In fact, there are situations where this is guaranteed to happen.

This section contains background about $p$-ranks of unramified double covers of singular curves.
Let $\bar{\mathcal{R}}_g$ be the compactification of $\mathcal{R}_g$ as defined and analyzed in \cite[Section 1.4]{F_generalL}.
The points of  $\bar{\mathcal{R}}_g \backslash \mathcal{R}_g$ represent unramified double covers of 
singular stable curves of genus $g$.

Let $\bar{\mathcal{R}}_{g;1}=\bar{\mathcal{R}}_g \times_{\bar{\mathcal{M}}_g} \bar{\mathcal{M}}_{g;1}$ be the moduli space whose points represent unramified double covers 
$\pi:Y  \to X$ together with marked points $y \in Y$ and $x \in X$
such that $\pi(y)=x$, as in \cite[Section 2.3]{OzmanPries2016}.
Adding a marking increases the dimension of the moduli space by $1$.
By \cite[Lemma 2.1]{OzmanPries2016}, 
there is a surjective morphism $\psi_R: \bar{\mathcal{R}}_{g;1} \to \bar{\mathcal{R}}_g$ whose fibers are irreducible.

Suppose that $g=g_1+g_2$, with $g_i \geq 1$.  We recall some material
about the boundary divisor $\Delta_{g_1:g_2}[\bar {\mathcal R}_{g}]$ from \cite[Section 1.4]{F_generalL}. 
This boundary divisor
is the image of the clutching map
\[\kappa_{g_1:g_2}: \bar {\mathcal R}_{g_1;1} \times \bar {\mathcal R}_{g_2; 1} \rightarrow \bar {\mathcal R}_{g},\] 
defined on a generic point as follows.  
Let $\tau_1$ be a point of $\bar {\mathcal R}_{g_1;1}$ 
representing $(\pi_1:C'_1 \to C_1, x'_1 \mapsto x_1)$
and let $\tau_2$ be a point of $\bar {\mathcal R}_{g_2;1}$ representing 
$(\pi_2:C'_2 \to C_2, x'_2 \mapsto x_2)$.
Let $X$ be the curve with components $C_1$ and $C_2$, formed by identifying $x_1$ and $x_2$ in an ordinary double point.
Let $Y$ be the curve with components $C'_1$ and $C'_2$, formed by identifying
$x_1'$ and $x_2'$ (resp.\ $x_1''=\sigma(x'_1)$ and $x_2''=\sigma(x'_2)$) in an ordinary double point. 
Then $\kappa_{g_1:g_2}(\tau_1, \tau_2)$ is the point representing the unramified double cover $Y \to X$.
This is illustrated in Figure \ref{fig:Deltagi}. 

\begin{figure}[h]
\centering
\includegraphics[scale=0.5]{./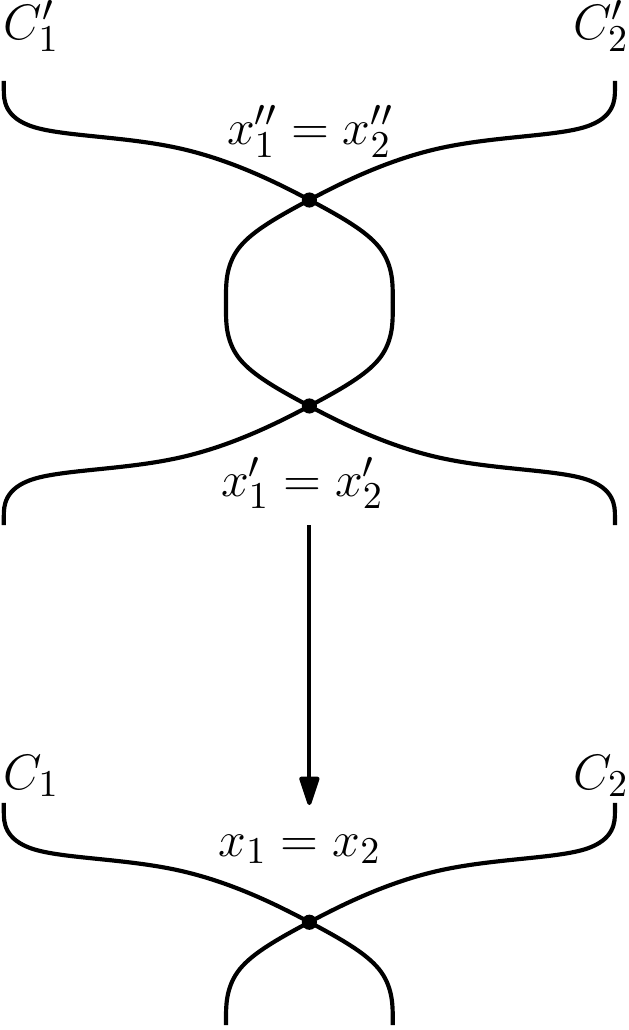}
\caption{$\Delta_{g_1:g_2}$}
\label{fig:Deltagi}
\end{figure}

In \cite[Section 3.4.1]{OzmanPries2016}, the authors analyze the $p$-rank stratification of this boundary divisor.
By \cite[Lemmas 3.6-3.7]{OzmanPries2016}, 
the clutching morphism restricts to the following:
\begin{equation}\label{Eclutch}
\kappa_{g_1:g_2}: \mathcal{R}_{g_1;1}^{(f_1,f'_1)} \times \mathcal{R}_{g_2;1}^{(f_2,f'_2)} \rightarrow \Delta_{g_1:g_2}[\bar{\mathcal{R}}_{g}^{(f_1+f_2,f'_1+f'_2+1)}].
\end{equation}

The following lemma is useful in the inductive arguments
in Section \ref{Sfinalresult}.

\begin{lemma} \label{Linductive}
Suppose that $S_i \subset \mathcal{R}_{g_i}^{(f_i,f'_i)}$ has dimension $d_i$ for $i=1,2$.
Then the dimension of
\[\mathcal{K}=\kappa_{g_1:g_2}(\psi_{R}^{-1}(S_1) \times \psi_{R}^{-1}(S_2))\]
is $d_1+d_2 + 2$.
Furthermore, if $S_i$ is a component of $\mathcal{R}_{g_i}^{(f_i,f'_i)}$ for $i=1,2$, then 
$\mathcal{K}$ is contained in a component of 
$\bar{\mathcal{R}}_{g}^{(f_1+f_2,f'_1+f'_2+1)}$ whose dimension is at most $d_1+d_2 + 3$.
\end{lemma}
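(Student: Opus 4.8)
The plan is to track dimensions along the two morphisms $\psi_R$ and $\kappa_{g_1:g_2}$, and then to exploit the fact that $\Delta_{g_1:g_2}$ is a divisor in $\bar{\mathcal{R}}_g$ in order to convert the dimension of $\mathcal{K}$ into a bound on a component of the $p$-rank stratum containing it. First I would compute $\dim\mathcal{K}$. Since $\psi_R$ is surjective with irreducible fibers of dimension $1$ and $S_i$ lies in the interior, the preimage $\psi_R^{-1}(S_i)$ has dimension $d_i+1$; hence $\psi_R^{-1}(S_1)\times\psi_R^{-1}(S_2)$ has dimension $d_1+d_2+2$. The clutching map $\kappa_{g_1:g_2}$ is finite onto the boundary divisor $\Delta_{g_1:g_2}$, so it preserves dimension, and therefore $\dim\mathcal{K}=d_1+d_2+2$. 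This proves the first assertion.

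For the second assertion, suppose now that each $S_i$ is a component of $\mathcal{R}_{g_i}^{(f_i,f'_i)}$. Since marking a point changes neither $p$-rank, one has $\mathcal{R}_{g_i;1}^{(f_i,f'_i)}=\psi_R^{-1}(\mathcal{R}_{g_i}^{(f_i,f'_i)})$, so $\psi_R^{-1}(S_i)$ is a component of $\mathcal{R}_{g_i;1}^{(f_i,f'_i)}$ and the product $\psi_R^{-1}(S_1)\times\psi_R^{-1}(S_2)$ is a component of $\mathcal{R}_{g_1;1}^{(f_1,f'_1)}\times\mathcal{R}_{g_2;1}^{(f_2,f'_2)}$. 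By \eqref{Eclutch}, $\mathcal{K}$ lands inside the boundary substratum $\Delta_{g_1:g_2}[\bar{\mathcal{R}}_g^{(f_1+f_2,f'_1+f'_2+1)}]$.

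The crux, and what I expect to be the main obstacle, is to upgrade this to the statement that $\mathcal{K}$ is an irreducible \emph{component} of $\Delta_{g_1:g_2}[\bar{\mathcal{R}}_g^{(f_1+f_2,f'_1+f'_2+1)}]$. Since $\kappa_{g_1:g_2}$ is finite, this substratum is the finite image of the disjoint union, over partitions $e_1+e_2=f_1+f_2$ and $e'_1+e'_2=f'_1+f'_2$, of the product strata $\mathcal{R}_{g_1;1}^{(e_1,e'_1)}\times\mathcal{R}_{g_2;1}^{(e_2,e'_2)}$, and the components of the image come from components of these products. To see that only the partition $(f_1,f'_1),(f_2,f'_2)$ can contribute a subset containing $\mathcal{K}$, I would use two facts: the two factor $p$-ranks of a boundary point are determined by the normalization of the cover, hence are constant along each fiber of $\kappa_{g_1:g_2}$; and the factor $p$-ranks are lower semicontinuous. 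If $\mathcal{K}$ were contained in the image of a product stratum for a different partition $(e_i,e'_i)$, these two facts would force $f_i\le e_i$ and $f'_i\le e'_i$ for $i=1,2$, which together with $\sum e_i=\sum f_i$ and $\sum e'_i=\sum f'_i$ yields $(e_i,e'_i)=(f_i,f'_i)$, a contradiction. Hence $\mathcal{K}$ is a component.

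Finally I would conclude using that $\Delta_{g_1:g_2}$ is an effective Cartier divisor. Let $T$ be a component of $\bar{\mathcal{R}}_g^{(f_1+f_2,f'_1+f'_2+1)}$ containing $\mathcal{K}$. If $T\subseteq\Delta_{g_1:g_2}$, then $T$ lies in the boundary substratum and contains the component $\mathcal{K}$ of that substratum, forcing $T=\overline{\mathcal{K}}$ and $\dim T=d_1+d_2+2$. If $T\not\subseteq\Delta_{g_1:g_2}$, then by Krull's principal ideal theorem each component of $T\cap\Delta_{g_1:g_2}$ has codimension $1$ in $T$; since $\mathcal{K}$ is a component of the boundary substratum and is contained in $T\cap\Delta_{g_1:g_2}$, the component of $T\cap\Delta_{g_1:g_2}$ through $\mathcal{K}$ is exactly $\mathcal{K}$, so $\dim T=\dim\mathcal{K}+1=d_1+d_2+3$. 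In either case $\dim T\le d_1+d_2+3$, completing the proof.
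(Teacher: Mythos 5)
Your proof is correct and follows essentially the same route as the paper, which handles the first claim exactly as you do (the fibers of $\psi_R$ are irreducible of dimension $1$ and the clutching maps are finite) and compresses the second claim into citations of \eqref{Eclutch} and the stack-theoretic Krull/divisor argument of \cite[page 614]{V:stack} via \cite[Lemma 3.1]{OzmanPries2016} --- which is precisely the content you reconstruct explicitly with the partition decomposition, the semicontinuity of the factor $p$-ranks, and the codimension-one intersection with $\Delta_{g_1:g_2}$. One small repair: points of $\Delta_{g_1:g_2}$ also arise from clutching covers of \emph{singular} curves, so the partition decomposition of $\Delta_{g_1:g_2}[\bar{\mathcal{R}}_g^{(f_1+f_2,f'_1+f'_2+1)}]$ should use the compactified strata $\bar{\mathcal{R}}_{g_i;1}^{(e_i,e'_i)}$ rather than $\mathcal{R}_{g_i;1}^{(e_i,e'_i)}$, and accordingly it is the closure of $\mathcal{K}$ in the stratum --- of the same dimension, since $\psi_R^{-1}(S_1)\times\psi_R^{-1}(S_2)$ is dense in the corresponding component of the compactified product stratum --- rather than $\mathcal{K}$ itself that is a component of the boundary substratum.
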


\begin{proof}
The first statement follows from the facts that the fibers of $\psi_R$ have dimension $1$ 
and the clutching maps are finite.
The second statement follows from \eqref{Eclutch} and \cite[page 614]{V:stack}, 
see \cite[Lemma 3.1]{OzmanPries2016}.
\end{proof}

\subsection{Some extra results when $p=3$} \label{Sextrap=3}

The $p=3$ case is guaranteed to be more difficult, because $\mathcal{R}_{2}^{(0,0)}$ and $\mathcal{R}_2^{(1,0)}$ are empty in that case \cite[Section 7.1]{FaberVdG2004}.
In other words, when $p=3$, if $\pi:Y \to X$ is an unramified double cover of a genus $2$ curve such 
that $P_\pi$ is non-ordinary, then $X$ is ordinary.
This is the key reason why there are extra hypotheses when $p=3$ in 
\cite[Propositions 6.1, 6.4, Theorem 7.2]{OzmanPries2016}.

We now have the extra information that all pairs $(f,f')$ occur when $p=3$ and $g=3$.
In this section, we use this to confirm that the extra hypotheses when $p=3$ can be removed from most of the
results of \cite[Sections 6 and 7]{OzmanPries2016}.
This will allow us to work more uniformly for odd $p$ in the next section.

Let $\tilde{\mathcal{A}}_{g-1}$ denote the toroidal compactification of $\mathcal{A}_{g-1}$.
Let $\tilde{\mathcal{A}}_{g-1}^{f'}$ denote the $p$-rank $f'$ stratum of $\tilde{\mathcal{A}}_{g-1}$.
Let $V_g^{f'} = \bar{Pr}_g^{-1}(\tilde{\mathcal{A}}_{g-1}^{f'}) \cap \mathcal{R}_g$, 

The next result about the $p$-rank stratification of $R_3^{(f,1)}$ is true for all $p \geq 5$ by 
\cite[Proposition 6.4]{OzmanPries2016}.

\begin{lemma} \label{L3ok1}
The result \cite[Proposition 6.4]{OzmanPries2016} does not require the hypothesis $f \not = 0, 1$ when $p = 3$.
In other words, if $p=3$ and $0 \leq f \leq 3$, then $\Pi^{-1}(\mathcal{M}_3^f)$ is irreducible and $\mathcal{R}_3^{(f,1)} = \Pi^{-1}(\mathcal{M}_3^f) \cap V_3^1 $ is non-empty with dimension $2+f$.
\end{lemma}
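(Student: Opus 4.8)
The plan is to re-examine the proof of \cite[Proposition 6.4]{OzmanPries2016} and to locate the unique step at which the hypothesis $f \neq 0,1$ (for $p=3$) is used, namely the \emph{non-emptiness} of $\mathcal{R}_3^{(f,1)}$; the irreducibility and dimension assertions then carry over verbatim. First I would note that the forgetful map $\Pi\colon \mathcal{R}_3 \to \mathcal{M}_3$ is finite \'etale of degree $2^6-1$, so $\Pi^{-1}(\mathcal{M}_3^f)$ has dimension $\dim \mathcal{M}_3^f = 2g-3+f = 3+f$ by \cite[Theorem 2.3]{FaberVdG2004}. Its irreducibility is governed by the prime-to-$p$ (here $\ell = 2$) monodromy of the stratum $\mathcal{M}_3^f$ acting on the $63$ nontrivial $2$-torsion classes; since $p = 3 \neq 2$, the argument in \cite[Proposition 6.4]{OzmanPries2016} never uses $f \neq 0,1$, and I would simply cite it.

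Next I would supply the non-emptiness of $\mathcal{R}_3^{(f,1)}$ for every $0 \leq f \leq 3$. This is precisely the input that the boundary-degeneration method of \cite{OzmanPries2016} fails to provide when $p = 3$ and $f \in \{0,1\}$: the clutching map \eqref{Eclutch} forces the genus-$2$ factor to carry a Prym of $p$-rank $0$, yet $\mathcal{R}_2^{(0,0)}$ and $\mathcal{R}_2^{(1,0)}$ are empty for $p = 3$ (Section \ref{Sextrap=3}). Instead I would invoke Proposition \ref{R3f0_forwardmethod} directly: for $p = 3$ it exhibits smooth plane quartics of every $3$-rank $f$ admitting an unramified double cover whose Prym has $3$-rank $1$, the cases $f = 0, 1$ being the entries $(0,1)$ and $(1,1)$ of Example \ref{Equadp3}. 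This disposes of non-emptiness uniformly in $f$.

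For the dimension I would bound $\mathcal{R}_3^{(f,1)}$ from both sides. The lower bound $\dim \geq 2+f$ on each component follows from the purity statement \cite[Proposition 5.2]{OzmanPries2016} with $g = 3$ and $f' = 1$, since $g-2+f+f' = 2+f$. For the upper bound, \cite[Theorem 1.1(1)]{OzmanPries2016} (valid for all odd $p$) shows that $\mathcal{R}_3^{(f,2)}$ is non-empty of dimension $3+f$; as it is the open locus of maximal Prym $p$-rank inside the irreducible variety $\Pi^{-1}(\mathcal{M}_3^f)$ of the same dimension, it is dense. Hence the non-ordinary-Prym locus, a proper closed subset of $\Pi^{-1}(\mathcal{M}_3^f)$, has dimension at most $2+f$, and it contains $\mathcal{R}_3^{(f,1)}$. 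The two bounds force every component of $\mathcal{R}_3^{(f,1)} = \Pi^{-1}(\mathcal{M}_3^f) \cap V_3^1$ to have dimension exactly $2+f$.

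The hard part will not be any computation but the structural claim that dropping the hypothesis affects only the non-emptiness step. This requires being sure that both the irreducibility of $\Pi^{-1}(\mathcal{M}_3^f)$ and the density of the ordinary-Prym locus $\mathcal{R}_3^{(f,2)}$ are genuinely uniform in characteristic: the former because $2$ is prime to $p = 3$, so the relevant monodromy is unchanged, and the latter because \cite[Theorem 1.1(1)]{OzmanPries2016} applies at $p = 3$. Once these are secured, Proposition \ref{R3f0_forwardmethod} carries the entire novel content of the lemma.
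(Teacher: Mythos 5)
Your proposal is correct and takes essentially the same route as the paper: the paper's proof likewise consists of observing that the hypothesis $f \neq 0,1$ in \cite[Proposition 6.4]{OzmanPries2016} was used only to establish non-emptiness of $\mathcal{R}_3^{(f,1)}$, which is now supplied by Proposition~\ref{R3f0_forwardmethod} (via the $(0,1)$ and $(1,1)$ entries of Example~\ref{Equadp3}). Your additional verification of the dimension via purity and density of $\mathcal{R}_3^{(f,2)}$ in the irreducible $\Pi^{-1}(\mathcal{M}_3^f)$ is sound but not needed, since the paper simply lets the irreducibility and dimension assertions carry over from the cited proposition.
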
 

\begin{proof}
The only place that this hypothesis was used was to verify that $\mathcal{R}_3^{(f,1)}$ is non-empty, which we have now verified in Proposition \ref{R3f0_forwardmethod}.
\end{proof}

The next result about non-ordinary Prym varieties is true for all $p \geq 5$ by \cite[Theorem 7.1]{OzmanPries2016}.
We say that $P_\pi$ is almost ordinary if its $p$-rank satisfies $f'=g-2$. 

\begin{lemma}
The result \cite[Theorem 7.1]{OzmanPries2016} does not require the hypothesis $f \geq 2$ when $p=3$ and $g \geq 3$.
In other words, if $p=3$, $g \geq 3$, and $0 \leq f \leq g$, 
then $\mathcal{R}_g^{(f, g-2)}$ is non-empty and each of its components has dimension $2g-4+f$. More generally, let $S$ be a component of $\mathcal{M}_g^f$, then the locus of points of $\Pi^{-1}(S)$ representing unramified double covers for which the Prym variety $P_{\pi}$ is almost ordinary is non-empty and codimension 1 in $\Pi^{-1}(S)$.
\end{lemma}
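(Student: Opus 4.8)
The plan is to mirror the proof of Lemma \ref{L3ok1}: isolate the one step in the proof of \cite[Theorem 7.1]{OzmanPries2016} that forces the hypothesis $f \geq 2$ when $p=3$, and show that the new existence results of Section \ref{sec:forwards} make it unnecessary. That proof is an induction on $g$ which produces unramified double covers of genus $g$ with almost ordinary Prym by attaching lower-genus covers along the clutching maps $\kappa_{g_1:g_2}$ of Section \ref{Sboundary}. As explained in Section \ref{Sextrap=3}, the sole source of the restriction is that $\mathcal{R}_2^{(f_2,0)}$ is empty for $f_2 \in \{0,1\}$ when $p=3$, so that the genus $2$ almost ordinary covers used to seed the induction exist only when the base curve is ordinary; this is what forces $f \geq 2$.

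First I would replace the genus $2$ seed by a genus $3$ one. Almost ordinary in genus $3$ means $f' = 1$, and by Lemma \ref{L3ok1} the stratum $\mathcal{R}_3^{(f_3,1)}$ is non-empty of dimension $2+f_3 = 2\cdot 3 - 4 + f_3$ for \emph{every} $0 \leq f_3 \leq 3$, with $\Pi^{-1}(\mathcal{M}_3^{f_3})$ irreducible; this is exactly the base case of the theorem, now available for all curve $p$-ranks including $f_3 = 0, 1$. To pass from genus $g-1$ to genus $g$, I would clutch an almost ordinary cover in $\mathcal{R}_{g-1}^{(f_1, g-3)}$, supplied by the inductive hypothesis, to an unramified double cover of an elliptic curve. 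The genus $1$ factor has a $0$-dimensional Prym, so it contributes $f'_2 = 0$, and by \eqref{Eclutch} the clutched cover lies in $\bar{\mathcal{R}}_g^{(f_1+f_2,\,g-2)}$, where $f_2 \in \{0,1\}$ is the $p$-rank of the elliptic curve. Taking the elliptic curve ordinary or supersingular and starting from an appropriate $f_3 \leq 3$, every curve $p$-rank $0 \leq f \leq g$ is realized; the key point is that elliptic curves of either $p$-rank exist in every odd characteristic, so this step never reintroduces the $p=3$ obstruction.

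With non-emptiness secured, the dimension and codimension assertions follow from the characteristic-uniform machinery already used in \cite{OzmanPries2016}. The purity result \cite[Proposition 5.2]{OzmanPries2016} bounds the dimension of every component of $\mathcal{R}_g^{(f,g-2)}$ below by $2g-4+f$; on the other hand the almost ordinary locus is a proper closed subset of $\Pi^{-1}(S)$ because the generic Prym over any component of $\mathcal{M}_g^f$ is ordinary (a special case of \cite[Theorem 1.1(1)]{OzmanPries2016}, which holds for all odd $p$), and since $\dim \Pi^{-1}(S) = \dim S = 2g-3+f$ this forces the almost ordinary locus to have dimension exactly $2g-4+f$, that is, codimension $1$. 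I expect the main obstacle to be the bookkeeping needed to confirm that the base-case non-emptiness is genuinely the \emph{only} place the hypothesis $f \geq 2$ entered the original argument, and in particular that the clutching construction can be run through a boundary point of \emph{each} component $S$ of $\mathcal{M}_g^f$ rather than producing a single example; this part is carried out exactly as in \cite{OzmanPries2016}, the only new input being the genus $3$ base cases with $f_3 \in \{0,1\}$ now furnished by Lemma \ref{L3ok1}.
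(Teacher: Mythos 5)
Your proposal is correct and takes essentially the same route as the paper: the paper's proof likewise observes that the hypothesis $f \geq 2$ when $p=3$ enters the proof of \cite[Theorem 7.1]{OzmanPries2016} only through the base-case nonemptiness supplied by \cite[Proposition 6.4]{OzmanPries2016} (in the ``base cases'' and ``non-empty'' paragraphs), which Lemma~\ref{L3ok1} now furnishes for all $0 \leq f \leq 3$, after which the original argument runs unchanged. Your reconstruction of that argument---clutching with genus-$1$ covers via \eqref{Eclutch} to propagate nonemptiness, purity \cite[Proposition 5.2]{OzmanPries2016} for the lower dimension bound, and generic ordinariness of the Prym from \cite[Theorem 1.1(1)]{OzmanPries2016} for the upper bound---is an accurate unpacking of the proof the paper simply cites.
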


\begin{proof}
The only place this hypothesis was used was to apply \cite[Proposition 6.4]{OzmanPries2016} in the `base cases' and `non-empty' paragraphs.  So the result follows by Lemma \ref{L3ok1}.
\end{proof}

The hypothesis $p \geq 5$ also appears in \cite[Corollary 7.3]{OzmanPries2016}, because a key point of the proof is that $\mathcal{R}_2^{(1,0)}$ and 
$\mathcal{R}_2^{(0,0)}$ are non-empty, which is false when $p =3$.
We generalize \cite[Corollary 7.3]{OzmanPries2016} to include the case $p=3$ in Section \ref{Sfinalresult}.

\subsection{A dimension result}

The following result is also needed in Section \ref{Sfinalresult}.

\begin{proposition} \label{P20in30}
Let $3 \leq p \leq 19$.  Then $\mathcal{R}_3^{(3,0)}$ contains a component of 
dimension $4$ and $\mathcal{R}_3^{(2,0)}$ contains a component of dimension $3$.
\end{proposition}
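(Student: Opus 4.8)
The plan is to trap both strata between the purity lower bound of \cite[Proposition~5.2]{OzmanPries2016} and an upper bound coming from Verra's description of the Prym fibers, and to locate them inside the locus $V_3^0=\bar{Pr}_3^{-1}(\tilde{\mathcal{A}}_2^0)\cap\mathcal{R}_3$ from Section~\ref{Sextrap=3}. First I would assemble the relevant $4$-dimensional family of covers. By the dimension count of \cite[Theorem~2.3]{FaberVdG2004}, the $p$-rank $0$ stratum $\mathcal{M}_2^0$ has dimension $1$; letting $Z$ vary over $\mathcal{M}_2^0$ and $V$ vary over the $\mathbb{P}^3$ of planes, the construction $X=V\cap K$ of Section~\ref{Sexplicitfiber} (with $K$ the Kummer surface of $\Jac(Z)$) gives a family $\mathcal{F}$ of dimension $4$ of unramified double covers $\pi:Y\to X$ with $P_\pi\simeq\Jac(Z)$ of $p$-rank $0$. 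By Verra's theorem (Sections~\ref{SVerra}--\ref{Sexplicitfiber}), for each such $Z$ the plane-section construction is birational onto the $3$-dimensional component $N_s$ of the Prym fiber, and conversely every smooth plane quartic with Prym isomorphic to $\Jac(Z)$ arises this way; the hyperelliptic and singular covers lie in the lower-dimensional components of the fibers. Hence $\mathcal{F}$ dominates the smooth-plane-quartic part of $V_3^0$ by a generically finite map, so $\dim V_3^0=4$.

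For the stratum $\mathcal{R}_3^{(3,0)}$ I would simply combine these inputs. It is non-empty by Proposition~\ref{R3f0_forwardmethod}, each of its components has dimension at least $g-2+f+f'=4$ by \cite[Proposition~5.2]{OzmanPries2016}, and it is contained in $V_3^0$, which has dimension $4$. Therefore every component of $\mathcal{R}_3^{(3,0)}$ has dimension exactly $4$, and in particular it contains a (non-empty) component of dimension $4$; this also re-proves the dimension assertion of Proposition~\ref{Pexist5mod6} uniformly.

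For $\mathcal{R}_3^{(2,0)}$ the lower bound of $3$ is again \cite[Proposition~5.2]{OzmanPries2016}, and the content lies in the matching upper bound. Here I would work on a component $\mathcal{F}_0$ of $\mathcal{F}$ carrying an ordinary member, which exists because $\mathcal{R}_3^{(3,0)}$ is non-empty and ``$X$ ordinary'' is an open condition. On $\mathcal{F}_0$ the function $\det(H_X)$ therefore does not vanish identically, and by Proposition~\ref{PhomogX} it is a single homogeneous polynomial of degree $6(p-1)$ in the parameters $a,b,c$ and the coefficients of $D(x)$. Its zero locus $\mathcal{D}=\{\det(H_X)=0\}\subset\mathcal{F}_0$ is thus pure of codimension $1$, hence of dimension $3$. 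Now Proposition~\ref{R3f0_forwardmethod} provides, for each $3\leq p\leq 19$, an explicit smooth $X$ with $f=2$ and $f'=0$; it corresponds to a point of $\mathcal{D}$ (as $X$ is non-ordinary), and by upper semicontinuity of the $p$-rank the component of $\mathcal{D}$ through it has $f=2$ at its generic point. Pushing this component forward under the generically finite map $\mathcal{F}\to V_3^0$ yields a component of $\mathcal{R}_3^{(2,0)}$ of dimension $3$. For $p=3$ the same conclusion is reached completely explicitly in Proposition~\ref{Lplane3}: a concrete plane $V$ gives an ordinary $X_V^\alpha$ for all but finitely many $\alpha$ and a smooth $3$-rank $2$ curve for a nonzero finite set of $\alpha$, exhibiting the codimension-$1$ behaviour directly.

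The main obstacle is precisely the non-vanishing of $\det(H_X)$ on the $p$-rank $0$ family $\mathcal{F}$, i.e.\ the existence of an ordinary plane section of the Kummer surface of a $p$-rank $0$ abelian surface. Since $\det(H_X)$ is an unwieldy homogeneous polynomial of degree $6(p-1)$ in the seven parameters $a,b,c,d_0,\dots,d_6$, I do not expect a clean uniform argument, and it is exactly this step that is verified computationally and forces the hypothesis $3\leq p\leq 19$; the case $p=3$ is handled conceptually via Proposition~\ref{Lplane3}, while Proposition~\ref{Pexist5mod6} already secures the $f=3$ part unconditionally for $p\equiv 5\bmod 6$.
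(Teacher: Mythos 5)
Your $f=3$ argument is sound and is essentially the paper's: purity gives the lower bound $4$, and Verra's fiber description shows every component of $\mathcal{R}_3^{(3,0)}$ sits inside a $4$-dimensional locus (your $V_3^0$, the paper's $N_{\mathcal Z}$ over a $1$-dimensional component $\mathcal Z$ of $\mathcal{A}_2^0$), so the dimension is exactly $4$.

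For $f=2$, however, there is a genuine gap, and it occurs exactly where the restriction $3\leq p\leq 19$ does real work in the paper. You choose a component $\mathcal{F}_0$ of your family $\mathcal{F}$ ``carrying an ordinary member'' and then assert that the explicit $(2,0)$ example of Proposition~\ref{R3f0_forwardmethod} gives a point of $\mathcal{D}=\{\det(H_X)=0\}\subset\mathcal{F}_0$. But $\mathcal{F}$ has one component for each irreducible component of $\mathcal{M}_2^0$ (equivalently of $\mathcal{A}_2^0$), and nothing in your argument places the $(2,0)$ example on the \emph{same} component as the ordinary member. If the $(2,0)$ example lay on a component $\mathcal{F}_1\neq\mathcal{F}_0$ on which $\det(H_X)$ vanished identically, your codimension-$1$ argument would collapse there: the relevant locus would be all of the $4$-dimensional $\mathcal{F}_1$, and semicontinuity would then produce a $4$-dimensional component of $\mathcal{R}_3^{(2,0)}$, not one of dimension $3$. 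The same issue resurfaces when you push forward: to conclude that your $3$-dimensional image is a full component of $\mathcal{R}_3^{(2,0)}$, you must rule out that the component $S_2$ containing it is dense in some $4$-dimensional $N_{\mathcal{Z}'}$ whose generic point is non-ordinary, and this again requires knowing the generic point of \emph{every} relevant $N_{\mathcal{Z}'}$ is ordinary. The paper closes this gap in two steps: for $3\leq p\leq 11$, $\mathcal{A}_2^0$ is irreducible by \cite[Theorem 5.8]{katsuraoort}, so there is only one component and your argument goes through; for $13\leq p\leq 19$, it uses the observation that the tables in Section~\ref{tables} employ the \emph{same} genus-$2$ curve $Z$ in the $(3,0)$ and $(2,0)$ rows, so $P_{\pi_2}\simeq P_{\pi_3}$ and both examples lie in fibers over the same point of $\mathcal{A}_2^0$ (the paper also notes the superspecial case where that point lies on several components, handling each such component via the $p$-rank $3$ example and semicontinuity). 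Your proposal needs one of these inputs to be complete; as written, the component-matching step is missing. (Minor quibble: the $p$-rank drops under specialization, so the generic point of the component of $\mathcal{D}$ through your example has $f\geq 2$, which combined with $f\leq 2$ on $\mathcal{D}$ gives $f=2$; your conclusion is right, but ``upper semicontinuity'' is the wrong name for the direction you use.)
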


\begin{proof} 
Let $3 \leq p \leq 19$ and $f=2,3$.
By Proposition~\ref{R3f0_forwardmethod}, there exists an unramified double cover
$\pi_f: Y_f \to X_f$ such that $X_f$ is a smooth plane quartic with $p$-rank f and $P_{\pi_f}$ has $p$-rank $0$.
This shows that $\mathcal{R}_3^{(f,0)}$ is non-empty.
Let $S_f$ denote a component of $\mathcal{R}_3^{(f,0)}$ containing the point representing $\pi_f$.
By purity, ${\rm dim}(S_f) \geq 1+f$.  
To finish the proof, it suffices to show that ${\rm dim}(S_f) = 1+f$. 

Consider the morphism ${Pr}_3: {\mathcal{R}}_3 \to {\mathcal{A}}_2$.
Let $\mathcal{Z}$ be an irreducible component of the $p$-rank $0$ stratum ${\mathcal{A}}_2^0$ of ${\mathcal{A}}_2$.
By \cite[Theorem 7, page 163]{MR0414557}, $\mathcal{Z}$ has dimension $1$ (in fact, it is rational \cite[page 117]{Oort}).
As in Section \ref{SVerra}, ${Pr}_3^{-1}(\mathcal{Z})$ has one irreducible component $N_{\mathcal Z}$ of relative dimension $3$ 
and three components of relative dimension $2$.  
Since ${\rm dim}(\mathcal{Z})=1$, it follows that ${\rm dim}(N_{\mathcal Z})=4$.

Let $f=3$.  Since $X_3$ is a smooth plane quartic, then $\pi_3$ is represented by a point 
of $N_{\mathcal Z}$ for some component $\mathcal{Z}$ of ${\mathcal{A}}_2^0$.
Thus $S_3 \subset N_{\mathcal Z}$. 
This implies that ${\rm dim}(S_3) \leq 4$, which completes the proof when $f=3$.

In fact, for any component $\mathcal{Z}$ of ${\mathcal{A}}_2^0$ containing the point representing $P_{\pi_3}$, 
the fact that the smooth plane quartic $X_3$ has $p$-rank $3$ 
implies that the generic point of $N_{\mathcal Z}$ is in $\mathcal{R}_3^{(3,0)}$.
This is relevant when $P_{\pi_3}$ is superspecial, 
in which case the point that represents it is in multiple components $\mathcal{Z}$.

Let $f=2$.  Since $X_2$ is a smooth plane quartic, then $\pi_2$ is represented by a point 
of $N_{\mathcal{Z}'}$ for some component $\mathcal{Z}'$ of ${\mathcal{A}}_2^0$.  Thus $S_2 \subset N_{\mathcal{Z}'}$.
We claim that the generic point of $N_{\mathcal{Z}'}$ represents an unramified double cover where $X$ has $p$-rank $3$, not $2$.
This is clear when $3 \leq p \leq 11$, because 
${\mathcal{A}}_2^0$ is irreducible for those primes 
by \cite[Theorem 5.8]{katsuraoort} so there is only one component of $\mathcal{A}_2^0$, so $\mathcal{Z}'=\mathcal{Z}$.
For $13 \leq p \leq 19$, we note in the tables given in Section \ref{tables} 
that the same curve $Z$ is used in the cases $(3,0)$ and $(2,0)$.
This means that the Prym varieties $P_{\pi_2}$ and $P_{\pi_3}$ are isomorphic.  
So ${\mathcal{Z}'}$ is one of the components $\mathcal{Z}$ of $\mathcal{A}_2^0$ containing the point representing $P_{\pi_3}$,
and the claim is true by the previous paragraph. 
Since $N_{\mathcal{Z}'}$ has dimension $4$ and its generic point represents a cover where $X$ has $p$-rank $3$, 
it follows that ${\rm dim}(S_2) \leq 3$.
\end{proof}

\subsection{Final result} \label{Sfinalresult}

In this section, in characteristic $3 \leq p \leq 19$,
we verify the existence of unramified double covers $\pi:Y \to X$ 
where $X$ has genus $g$ and $p$-rank $f$ and $P_\pi$ has 
$p$-rank $f'$, for arbitrary $g$ as long as $f$ is bigger than approximately $2g/3$ and 
$f'$ is bigger than approximately $g/3$.
This is most interesting when either $\frac{g}{3} \leq f' < \frac{g}{2}-1$ or $p=3$
because \cite[Corollary 7.2]{OzmanPries2016} resolves the case when 
$\frac{g}{2}-1 \leq f' \leq g-1$ with no conditions on $g$ and $f$ when $p \geq 5$.

We first include an inductive result which holds for any odd prime $p$. 
This strengthens \cite[Theorem 7.2]{OzmanPries2016}.

\begin{theorem} \label{Tbigg}
Let $f_0$ be such that $\mathcal{R}_3^{(f_1, 0)}$ has a (non-empty) component of dimension $1 + f_1$
in characteristic $p$ for each $f_1$ such that $f_0 \leq f_1 \leq 3$. 

Let $g \geq 2$ and write $g=3r+2s$ for integers $r,s \geq 0$.
Suppose that $rf_0 \leq f \leq g$ (with $f \geq rf_0 + 2s$ when $p=3$)
and $r+s-1 \leq f' \leq g-1$.

Then $\mathcal{R}_g^{(f, f')}$ has a (non-empty) component of dimension $g-2 +f +f'$
in characteristic $p$.
\end{theorem}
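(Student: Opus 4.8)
The plan is to prove the statement by induction on the number of glued pieces $n = r+s$, constructing the genus-$g$ cover by clutching $r$ genus-$3$ covers and $s$ genus-$2$ covers along a chain of $n-1$ nodes, and reducing at the outset to the case of minimal Prym $p$-rank $f' = r+s-1$. Indeed, by Proposition~\ref{Pgoup} it suffices to produce a component of $\mathcal{R}_g^{(f,\, r+s-1)}$ of dimension $g-2+f+(r+s-1)$ when $g\ge 3$, since that proposition then upgrades it to a component of $\mathcal{R}_g^{(f,F')}$ of dimension $g-2+f+F'$ for every $F'$ with $r+s-1\le F'\le g-1$, exactly the required range; when $g=2$ the full range of $f'$ follows directly from the known genus-$2$ results. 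So from now on I would fix $f' = r+s-1 = n-1$.

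The base cases are $n=1$. When $(r,s)=(1,0)$, so $g=3$, the hypothesis provides a component of $\mathcal{R}_3^{(f_1,0)}$ of dimension $1+f_1$ for each $f_0\le f_1\le 3$, which is the desired statement in genus $3$. When $(r,s)=(0,1)$, so $g=2$, I would invoke the genus-$2$ results: for $p\ge 5$ the stratum $\mathcal{R}_2^{(f_2,0)}$ has a component of dimension $f_2$ for each $f_2\in\{0,1,2\}$, while for $p=3$ only $f_2=2$ occurs, since $\mathcal{R}_2^{(0,0)}$ and $\mathcal{R}_2^{(1,0)}$ are empty; this is precisely the source of the extra constraint $f\ge rf_0+2s$ when $p=3$, as each genus-$2$ piece is then forced to contribute $2$ to the curve $p$-rank.

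For the inductive step I would peel off one base piece: if $r\ge 1$ take $(g_1,g_2)=(g-3,3)$, and otherwise ($r=0$, $s\ge 1$) take $(g_1,g_2)=(g-2,2)$. Writing $g_1=3r'+2s'$ with $n'=r'+s'=n-1$, the inductive hypothesis applied at the minimal Prym $p$-rank $f'_1=n-2$ yields a component $S_1\subset\mathcal{R}_{g_1}^{(f_1,\,n-2)}$ of dimension $g_1-2+f_1+(n-2)$, and the base case yields a component $S_2\subset\mathcal{R}_{g_2}^{(f_2,0)}$ of dimension $g_2-2+f_2$; I would choose the split $f=f_1+f_2$ admissibly, which is possible because both admissible ranges are intervals of consecutive integers whose sumset is exactly $[rf_0,g]$ (resp. $[rf_0+2s,g]$ when $p=3$). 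By \eqref{Eclutch} the clutching map lands in $\bar{\mathcal{R}}_g^{(f,\,(n-2)+0+1)}=\bar{\mathcal{R}}_g^{(f,\,n-1)}$, the single new node supplying the extra $+1$ to the Prym $p$-rank. Lemma~\ref{Linductive} then shows that $\mathcal{K}=\kappa_{g_1:g_2}(\psi_R^{-1}(S_1)\times\psi_R^{-1}(S_2))$ has dimension
\[ \dim S_1+\dim S_2+2 = g-2+f+(n-1)-1, \]
one short of the target, while $\mathcal{K}$ lies in a component $W$ of $\bar{\mathcal{R}}_g^{(f,\,n-1)}$ of dimension at most $\dim S_1+\dim S_2+3 = g-2+f+(n-1)$. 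Purity \cite[Proposition 5.2]{OzmanPries2016} supplies the reverse inequality for any component meeting the interior, so $W$ would then have dimension exactly $g-2+f+(n-1)$ and its interior part would be the sought component, closing the induction.

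The hard part will be the final point: showing that the component $W$ carrying $\mathcal{K}$ is not contained in the boundary divisor $\Delta_{g_1:g_2}$, i.e. that the clutched nodal cover deforms to an unramified double cover of a \emph{smooth} curve while keeping the $p$-ranks equal to $(f,\,n-1)$. Here I would argue exactly as in the deformation analysis of \cite[Theorem 7.2]{OzmanPries2016}, originally developed in \cite{AP:08}: smoothing a node can only raise $p$-ranks, so a generic deformation of $W$ has $p$-ranks at least $(f,\,n-1)$, while the dimension ceiling from Lemma~\ref{Linductive} forbids the stratum from growing, forcing equality on a dense open subset of $W$ and confining $\mathcal{K}$ to a genuine boundary divisor of $W$ rather than to all of it. This is the only step that is not formal, and it is where the purity of the $p$-rank stratification in the compactification $\bar{\mathcal{R}}_g$ is essential.
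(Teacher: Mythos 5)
Your proposal is correct and follows the paper's proof essentially verbatim: the reduction to $f'=r+s-1$ via Proposition~\ref{Pgoup}, induction on $n=r+s$ with the genus-$2$ base case from \cite[Proposition 6.1]{OzmanPries2016} and the genus-$3$ base case from the hypothesis, the clutching construction via \eqref{Eclutch} and the dimension count of Lemma~\ref{Linductive}, and finally purity forcing the containing component $\mathcal{W}$ of $\bar{\mathcal{R}}_g^{(f,r+s-1)}$ to have dimension exactly $g-2+f+f'$, so that its generic point escapes $\mathcal{K}$ and hence the boundary, exactly as in the paper's concluding step. The only differences are cosmetic: you always peel off a genus-$3$ piece while $r\geq 1$ and a genus-$2$ piece only when $r=0$, whereas the paper's Case 2 peels a genus-$2$ piece whenever $s\geq 1$, and you attach the Prym $p$-rank $n-2$ to the opposite factor; since the clutching map is symmetric and only one admissible decomposition and $p$-rank splitting is needed (your sumset-of-intervals observation, which matches the paper's choices including the $p=3$ constraint bookkeeping), this changes nothing.
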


\begin{proof}
In light of Proposition \ref{Pgoup}, it suffices to prove the result when $f' =r+s-1$.
The proof is by induction on $r+s$.
In the base case $(r,s)=(0,1)$, then $g=2$ and the result is true by \cite[Proposition 6.1]{OzmanPries2016}.
In the base case $(r,s)=(1,0)$, then $g=3$ and the result is true by hypothesis. 
Now suppose that $r + s \geq 2$.
As an inductive hypothesis, suppose that the result is true for all pairs $(r',s')$ such that $1 \leq r'+s' < r+s$.

Case 1: suppose that $r \geq 1$.  This implies $g \geq 5$.  Let $g_1=3$ and $g_2=g-3$.
By the hypotheses on $f$, it is possible to choose $f_1, f_2$ such that $f_1+f_2=f$ and $f_0 \leq f_1 \leq 3$ 
and $f_0 (r-1) \leq f_2 \leq g_2$ (with $f_2 \geq f_0(r-1) + 2s$ if $p=3$).
Let $f_1'=0$ and $f_2'=r+s-2$.
By hypothesis, $\mathcal{R}_3^{(f_1,0)}$ is non-empty and has a component $S_1$ 
of dimension $d_1= 1+f_1$.
By the inductive hypothesis applied to $(r-1, s)$, it follows that $\mathcal{R}_{g_2}^{(f_2, r+s-2)}$ is non-empty and 
has a component $S_2$ of dimension $d_2= g_2-2 +f_2 + (r+s -2)$.

By Lemma \ref{Linductive}, the image $\mathcal{K}$ of the clutching morphism
$\kappa$ has dimension $d_1+d_2+2$.
Furthermore, $\mathcal{K}$ is contained in a component $\mathcal{W}$ of 
$\bar{\mathcal{R}}_{g}^{(f_1+f_2,f'_1+f'_2+1)}= \bar{\mathcal{R}}_{g}^{(f,r+s-1)}$ 
whose dimension is at most 
\[d_1+d_2 + 3= (g_2+3) -2 + (f_1+f_2) + (r+s-1) = g-2+f +f'.\]
Also ${\rm dim}(\mathcal{W}) \geq g-2+f +f'$ by purity.
Thus ${\rm dim}(\mathcal{W}) = g-2+f +f'$ and the generic point of $\mathcal{W}$ is not contained in $\mathcal{K}$.
The generic points of $S_1$ and $S_2$ represent unramified double covers of smooth curves by hypothesis.
Thus the generic point of $\mathcal{W}$ is not contained in any other boundary component of 
$\bar{\mathcal{R}}_g$.  
It is thus contained in $\mathcal{R}_g$ and 
and so it represents an unramified double cover of a smooth curve.

Case 2: suppose that $s \geq 1$.  This implies $g\geq 4$.  Let $g_1=2$ and $g_2=g-2$.
By the hypotheses on $f$, when $p \geq 5$, it is possible to choose $f_1, f_2$ such that $f_1+f_2=f$ and 
$0 \leq f_1 \leq 2$ and $f_0 r \leq f_2 \leq g_2$.
When $p=3$, let $f_1=2$ and $f_2=f_0r + 2(s-1)$.
Let $f_1'=0$ and $f_2'=r+s-2$.
By \cite[Proposition 6.1]{OzmanPries2016}, $\mathcal{R}_2^{(f_1,0)}$ is non-empty and has a component $S_1$ 
of dimension $d_1= f_1$.
By the inductive hypothesis applied to $(r, s-1)$, it follows that $\mathcal{R}_{g_2}^{(f_2, r+s-2)}$ is non-empty and 
has a component $S_2$ of dimension $d_2= g_2-2 +f_2 + (r+s -2)$.
The rest of the proof follows the same reasoning as in Case (1).

\end{proof}

\begin{corollary} \label{Cmaincor}
Let $f_0=2$ and $3 \leq p \leq 19$.
Let $g \geq 2$ and write $g=3r+2s$ for integers $r,s \geq 0$.
Suppose that $2r \leq f \leq g$ (with $f \geq 2r + 2s$ when $p=3$)
and $r+s-1 \leq f' \leq g-1$.

Then $\mathcal{R}_g^{(f, f')}$ has a (non-empty) component of dimension $g-2 +f +f'$
in characteristic $p$.

In particular, this holds in the following situations:
\begin{enumerate}
\item If $g=3r$ and $(f, f')$ is such that $2 r \leq f \leq g$ and $r -1 \leq f' \leq g-1$;
\item If $g=3r + 2$ and $(f,f')$ is such that 
$2r \leq f \leq g$ and $r \leq f' \leq g-1$, 
(with $f \geq 2r+2$ when $p=3$);
\item If $g=3r + 4$ and $(f, f')$ is such that 
$2 r \leq f \leq g$ 
(with $f \geq 2r+4$ when $p=3$)
and $r+1 \leq f' \leq g-1$.
\end{enumerate}
\end{corollary}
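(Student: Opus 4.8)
The plan is to obtain Corollary~\ref{Cmaincor} as a direct specialization of Theorem~\ref{Tbigg} to the value $f_0 = 2$, with the hypothesis of that theorem supplied by the computational input of Proposition~\ref{P20in30}. First I would verify that Proposition~\ref{P20in30} provides exactly the hypothesis that Theorem~\ref{Tbigg} requires when $f_0 = 2$. That hypothesis asks that $\mathcal{R}_3^{(f_1,0)}$ have a (non-empty) component of dimension $1 + f_1$ for each $f_1$ with $2 \leq f_1 \leq 3$. Proposition~\ref{P20in30} asserts precisely this: $\mathcal{R}_3^{(2,0)}$ contains a component of dimension $3 = 1 + 2$ and $\mathcal{R}_3^{(3,0)}$ contains a component of dimension $4 = 1 + 3$. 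With this in hand, Theorem~\ref{Tbigg} applied with $f_0 = 2$ immediately yields the general statement of the corollary: for $g = 3r + 2s$ with $2r \leq f \leq g$ (and $f \geq 2r + 2s$ when $p = 3$) and $r + s - 1 \leq f' \leq g - 1$, the stratum $\mathcal{R}_g^{(f,f')}$ has a (non-empty) component of dimension $g - 2 + f + f'$.

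It then remains to read off the three enumerated cases by choosing a suitable decomposition $g = 3r + 2s$ for each residue class of $g$ modulo $3$. Since the representation $g = 3r + 2s$ with $r, s \geq 0$ is not unique, and since $r + s = (g + s)/3$, the range $r + s - 1 \leq f'$ is widest precisely when $s$ is minimal; this motivates the three choices. For $g \equiv 0 \bmod 3$ I would take $s = 0$, giving case~(1) with lower bound $f' \geq r - 1$; for $g \equiv 2 \bmod 3$ I would take $s = 1$, giving case~(2) with $f' \geq r$ and (when $p = 3$) $f \geq 2r + 2$; and for $g \equiv 1 \bmod 3$ (necessarily $g \geq 4$) I would take $s = 2$, giving case~(3) with $f' \geq r + 1$ and (when $p = 3$) $f \geq 2r + 4$. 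Substituting $r + s - 1$ and $2r + 2s$ for these values of $s$, and checking that these three residue classes exhaust all $g \geq 2$, completes the derivation.

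Since this is a genuine corollary, there is no real obstacle in the deduction itself; the entire difficulty lies in its two inputs. Theorem~\ref{Tbigg} carries the inductive clutching argument, while Proposition~\ref{P20in30} supplies the base cases $\dim \mathcal{R}_3^{(2,0)} = 3$ and $\dim \mathcal{R}_3^{(3,0)} = 4$. The former is where the hypothesis $3 \leq p \leq 19$ enters, since its proof relies on the explicit examples produced by Proposition~\ref{R3f0_forwardmethod}, which were found computationally only in that range. Thus the main task is simply to confirm that the numerology of the three cases matches the general statement and to keep track of the restriction on $p$ inherited from the base case.
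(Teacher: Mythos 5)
Your proposal is correct and follows exactly the paper's route: the paper's proof consists of citing Proposition~\ref{P20in30} to supply the hypothesis of Theorem~\ref{Tbigg} with $f_0=2$ and declaring the result immediate. Your additional bookkeeping --- checking that $\mathcal{R}_3^{(2,0)}$ and $\mathcal{R}_3^{(3,0)}$ have dimensions $3$ and $4$, and specializing $s=0,1,2$ for $g\equiv 0,2,1 \bmod 3$ to recover the three enumerated cases --- is left implicit in the paper but is verified correctly here.
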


\begin{proof}
By Proposition \ref{P20in30}, the hypothesis in Theorem \ref{Tbigg} holds for $f_0=2$.
The result is then immediate from Theorem \ref{Tbigg}.
\end{proof}

\bibliographystyle{plain}
\bibliography{prym}

\end{document}